\DeclareSymbolFontAlphabet{\mathbb}{AMSb} 
\DeclareSymbolFontAlphabet{\mathbbl}{bbold} 
\newcommand{\Prism}{{\mathlarger{\mathbbl{\Delta}}}}
\date{}
\newtheorem{thm}{Theorem}[section]
\newtheorem{lem}[thm]{Lemma}
\newtheorem{example}[thm]{Example}
\newtheorem{defn}[thm]{Definition}
\newtheorem*{thm*}{Theorem}
\newtheorem{rem}[thm]{Remark}
\newtheorem{conj}[thm]{Conjecture}
\newtheorem{prop}[thm]{Proposition}
\newtheorem{condition}[thm]{Condition}
\newtheorem{cor}[thm]{Corollary}
\newenvironment{f-proof}[1][\sc Proof of Corollary \ref{corCXnintro}.]{\begin{trivlist}
\item[\hskip \labelsep {\bfseries #1}]}{\hfill{$\square$}\end{trivlist}}
\newenvironment{ff-proof}[1][\sc Proof of Corollary \ref{corintro}.]{\begin{trivlist}
\item[\hskip \labelsep {\bfseries #1}]}{\hfill{$\square$}\end{trivlist}}
\newenvironment{fff-proof}[1][\sc Proof of Theorem \ref{thmpadic}.]{\begin{trivlist}
\item[\hskip \labelsep {\bfseries #1}]}{\hfill{$\square$}\end{trivlist}}
\newcommand{\Prod}{\displaystyle\prod}
\newcommand{\appl}[4]{
 \begin{array}{cccc}
  #1 & \longrightarrow & #2\\
  #3 & \longmapsto & #4
 \end{array}
}
\newcommand{\bq}{\mathbb Q}
\newcommand{\bz}{\mathbb Z}
\newcommand{\bn}{\mathbb N}
\newcommand{\br}{\mathbb R}
\newcommand{\bc}{{\mathbb C}}
\newcommand{\bt}{\mathbb T}
\newcommand{\bs}{\mathbb S}
\newcommand{\X}{{\mathcal X}}
\newcommand{\s}{{\mathrm{Spec}}}
\title{Topological Hochschild homology and Zeta-values}
\begin{document}

\author{Baptiste Morin}

\maketitle

\begin{abstract}
Using work of Antieau and Bhatt-Morrow-Scholze, we define a filtration on topological Hochschild homology and its variants $TP$ and $TC^-$ of quasi-lci rings with bounded torsion, which recovers the BMS-filtration after $p$-adic completion. Then we compute the graded pieces of this filtration in terms of Hodge completed derived de Rham cohomology relative to the base ring $\bz$. We denote the cofiber of the canonical map from $\mathrm{gr}^{n}TC^-(-)$ to $\mathrm{gr}^{n}TP(-)$ by $L\Omega^{<n}_{-/\bs}[2n]$. Let $\X$ be a regular connected scheme of dimension $d$ proper  over $\s(\bz)$ and let $n\in\bz$ be an arbitrary integer.  Together with Weil-étale cohomology with compact support $R\Gamma_{W,c}(\X,\bz(n))$, the complex $L\Omega^{<n}_{\X/\bs}$ is expected to give the Zeta-value $\pm\zeta^*(\X,n)$ on the nose. Combining the results proven here with a theorem recently proven in joint work with Flach, we obtain a formula relating $L\Omega^{<n}_{\X/\bs}$, $L\Omega^{<d-n}_{\X/\bs}$, Weil-étale cohomology of the archimedean fiber $\X_{\infty}$ with Tate twists $n$ and $d-n$, the Bloch conductor $A(\X)$ and the special values of the archimedean Euler factor of the Zeta-function $\zeta(\X,s)$ at $s=n$ and $s=d-n$. This formula is a shadow of the functional equation of Zeta-functions. 
\end{abstract}

\section{Introduction}

Let $\X$ be a regular scheme proper over $\mathrm{Spec}(\bz)$. The special value $\zeta^*(\X,n)$ of the zeta function $\zeta(\X,s)$ at an arbitrary integer argument $s=n\in\bz$ is conjecturally described in \cite{Flach-Morin18} in terms of two perfect complexes of abelian groups $R\Gamma_{W,c}(\X,\bz(n))$ and $R\Gamma(\X,L\Omega^{<n}_{\X/\bz})$ together with a canonical isomorphism
$$\lambda_{\infty}:\br\stackrel{\sim}{\rightarrow} \left(\mathrm{det}_{\bz}R\Gamma_{W,c}(\X,\bz(n))\otimes_{\bz}\mathrm{det}_{\bz}R\Gamma(\X,L\Omega^{<n}_{\X/\bz})\right)\otimes_{\bz}\br.$$
Here $R\Gamma(\X,L\Omega^{<n}_{\X/\bz})$ denotes derived de Rham cohomology modulo the $n$-th step of the Hodge filtration, as defined in \cite{Illusie72}.  
If  $\X$ is smooth over a number ring and $n\leq 1$, or if $\X$ lies over a finite field and $n\in\bz$ is arbitrary, then the equality
\begin{equation}\label{SVConj}
\lambda_{\infty}(\zeta^*(\X,n)^{-1})\cdot\bz=\mathrm{det}_{\bz}R\Gamma_{W,c}(\X,\bz(n))\otimes_{\bz}\mathrm{det}_{\bz}R\Gamma(\X,L\Omega^{<n}_{\X/\bz})
\end{equation}
follows from standard conjectures (see  \cite[Theorem 5.27]{Flach-Morin18}, \cite[Proposition 2.1]{Flach-Siebel19} and \cite{Morin16}). Note that (\ref{SVConj}) determines $\zeta^*(\X,n)$ up to sign. However, (\ref{SVConj}) is not quite true in general: the equality  (\ref{SVConj}) is expected to hold up to a certain correction factor $C(\X,n)$. For example, if $F$ is an abelian number field, $\X=\mathrm{Spec}(\mathcal{O}_F)$ and $n\geq 2$, then we have \cite[Proposition 5.34]{Flach-Morin18}
\begin{equation}\label{C(oF)}
C(\mathrm{Spec}(\mathcal{O}_F),n)=(n-1)!^{-[F:\bq]}.
\end{equation}
The motivation for this paper was to show that the equality (\ref{SVConj}) should hold for arbitrary $\X$ and arbitrary $n\in\bz$, provided one replaces in some sense the base ring $\bz$ by the sphere spectrum $\bs$ in the definition of  $R\Gamma(\X,L\Omega^{<n}_{\X/\bz})$. We realized that this philosophy might be true when we saw the computation of $\pi_*THH(\mathcal{O}_F)$ in \cite{Lindenstrauss-Madsen00}.

\subsection{The motivic filtration on topological Hochschild homology and its variants}

In order to define a complex which would play the role of $R\Gamma(\X,L\Omega^{<n}_{\X/\bs})$, recall from \cite{Antieau18} that negative cyclic homology $HC^-(\X)$ and periodic  cyclic homology $HP(\X)$ have natural filtrations $\mathrm{Fil}_B^*HC^-(\X)$ and $\mathrm{Fil}_B^*HP(\X)$ respectively, such that the cofiber of the canonical map
\begin{equation}\label{antieau}
\mathrm{gr}^n_B HC^-(\X)[-2n]\rightarrow \mathrm{gr}^n_B HP(\X)[-2n]
\end{equation}
is equivalent to $R\Gamma(\X,L\Omega^{<n}_{\X/\bz})$. Hence we may redefine
$$R\Gamma(\X,L\Omega^{<n}_{\X/\bz}):=\mathrm{Cofib}(\ref{antieau})=:\mathrm{gr}^n_B \Sigma^2HC(\X)[-2n].$$
Here, for any commutative ring $A$, we denote  Hochschild homology of $A/\bz$ by $$HH(A):=HH(A/\bz):=A^{\otimes_{\bz}\mathbb{T}}$$ 
where $\bt$ is the circle group, and we set $HC^-(A):=HH(A)^{h\bt}$, $HP(A):=HH(A)^{t\bt}$ and $HC(A):=HH(A)_{h\bt}$. 

Replacing $\bz$ by the sphere spectrum $\bs$, one obtains topological Hochschild homology $THH(A):=A^{\otimes_{\bs}\mathbb{T}}$, negative topological cyclic homology $TC^-(A/\bz):=THH(A)^{h\bt}$ and periodic  topological cyclic homology $TP(A):=THH(A)^{t\bt}$. We also consider "positive topological cyclic homology" $TC^+(A):=THH(A)_{h\bt}$, and we refer to \cite{Nikolaus-Scholze18} for these constructions (see also \cite{Hesselholt-Nikolaus19} for a survey). Therefore, we need to define a filtration on topological Hochschild homology and its variants, which is in some sense similar to the filtration $\mathrm{Fil}^*_B$ mentioned above. Recall from \cite{BMS} that such a filtration does exist,  after $p$-adic completion, for $p$-adically complete quasi-syntomic rings. We  denote by $\mathrm{Fil}_{BMS}^*$  the filtrations defined in \cite{BMS}.  We say that a commutative ring $A$ has bounded torsion if $A$ has bounded $p^{\infty}$-torsion for any prime $p$.  Finally, we denote by $(-)^{\wedge}_p$ the $p$-adic completion functor,  by $DF(R):=\mathrm{Fun}(\bz^{\mathrm{op}},D(R))$ the filtered derived $\infty$-category of some $\mathbb{E}_{\infty}$-ring $R$,  and by $\widehat{DF}(R)$ the full subcategory of $DF(R)$ spanned by the complete filtrations.

\begin{defn}\label{defintro} Let $A$ be a commutative ring with bounded torsion such that $L_{A/\bz}$ has Tor-amplitude in $[-1,0]$.  
We define  $F^*THH(A)$  by the pull-back square
\[ \xymatrix{
F^*THH(A)\ar[d]\ar[r]& \mathrm{Fil}_{HKR}^*HH(A) \ar[d]\\
\prod_p\mathrm{Fil}_{BMS}^*THH(A^{\wedge}_p,\bz_p)\ar[r]&\prod_p\mathrm{Fil}_{BMS}^*HH(A^{\wedge}_p/\bz_p,\bz_p) 
}
\]
of $\mathbb{E}_{\infty}$-algebra objects in the symmetric monoidal stable $\infty$-category $\widehat{DF}(\bs[\bt])$. Similarly, for $?=P,C^-$,
we define  $F^*T?(A)$  by the pull-back square
\[ \xymatrix{
F^*T?(A)\ar[d]\ar[r]& \mathrm{Fil}_{B}^*H?(A) \ar[d]\\
\prod_p\mathrm{Fil}_{BMS}^*T?(A^{\wedge}_p,\bz_p)\ar[r]&\prod_p\mathrm{Fil}_{BMS}^*H?(A^{\wedge}_p/\bz_p,\bz_p) 
}
\]
of $\mathbb{E}_{\infty}$-algebra objects in the symmetric monoidal stable $\infty$-category $\widehat{DF}(\bs)$.

There is a morphism $F^*TC^-(A)\rightarrow F^*TP(A)$ of $\mathbb{E}_{\infty}$-algebra objects, and we define $F^*\Sigma^2TC^+(A)\in \widehat{DF}(\bs)$ as the cofiber
$$F^*\Sigma^2TC^+(A):=\mathrm{Cofib}(F^*TC^-(A)\rightarrow F^*TP(A))$$
computed in the stable $\infty$-category $\widehat{DF}(\bs)$.
\end{defn}

The filtrations of Definition \ref{defintro} are functorial, multiplicative and complete by definition, and $F^*THH(A)$ is $\bt$-equivariant. We show that these filtrations are moreover exhaustive, and that $\mathrm{gr}_F^0TC^-(\bz)$ is an $\mathbb{E}_{\infty}$-$\bz$-algebra. It follows that the graded pieces of these filtrations are all $H\bz$-modules. Moreover, for any prime $p$, we have a canonical equivalence
$$(F^*T?(A))^{\wedge}_p\simeq \mathrm{Fil}_{BMS}^*T?(A^{\wedge}_p,\bz_p)$$
for $?=HH,P,C^-$. For any commutative ring $A$, we denote by $L_{A/\bz}$ the cotangent complex, by 
$L\widehat{\Omega}_{A/\bz}$ the Hodge completion of the derived de Rham complex \cite{Illusie72}, by  $L\widehat{\Omega}_{A/\bz}^{\geq n}$ the $n$-step of the Hodge filtration and by $L\Omega_{A/\bz}^{< n}$ the cofiber of the map $L\widehat{\Omega}_{A/\bz}^{\geq n}\rightarrow L\widehat{\Omega}_{A/\bz}$.  A bicomplete multiplicative bifiltration is a complete multiplicative filtration in $\widehat{DF}(R)$. The essential ingredients of the proof of the following result are the known computation \cite{Bökstedt85} of $\pi_*THH(\bz)$ and the fact \cite{BMS} that the key players are, after $p$-adic completion, concentrated in even degrees locally for the quasisyntomic topology.  

\begin{thm}\label{thm-intro} (cf. Section \ref{SectionBifiltglobal}). Let $A$ be a ring with bounded torsion such that $L_{A/\bz}$ has Tor-amplitude in $[-1,0]$. For $$?=HH,P,C^-,$$
there exists a functorial, $(\bn^{\mathrm{op}}\times\bz^{\mathrm{op}})$-indexed, multiplicative, bicomplete, biexhaustive bifiltration $\mathcal{Z}_{\bz}^*F^*T?(A)$ on $T?(A)$ such that
we have canonical equivalences
\begin{eqnarray*}
\mathcal{Z}_{\bz}^0F^*T?(A)&\simeq& F^*T?(A)\\
\mathrm{gr}^j_{\mathcal{Z}_{\bz}} \mathrm{gr}^{n}_{F}THH(A)&\simeq &L\Lambda^{n-j}L_{A/\bz}\otimes^L_{\bz} \bz/j[n+j-\epsilon_j]\\
\mathrm{gr}^j_{\mathcal{Z}_{\bz}} \mathrm{gr}^{n}_{F}TP(A) &\simeq & L\widehat{\Omega}_{A/\bz}\otimes^L_{\bz}\bz/j[2n-\epsilon_j]\\
\mathrm{gr}^j_{\mathcal{Z}_{\bz}} \mathrm{gr}^{n}_{F}TC^-(A)&\simeq & L\widehat{\Omega}^{\geq n-j}_{A/\bz}\otimes^L_{\bz}\bz/j[2n-\epsilon_j]\\
\mathrm{gr}^j_{\mathcal{Z}_{\bz}} \mathrm{gr}^{n}_{F}\Sigma^2TC^+(A)&\simeq & L\Omega^{< n-j}_{A/\bz}\otimes^L_{\bz}\bz/j[2n-\epsilon_j]
\end{eqnarray*}
for any $j\in\bn$ and any $n\in\bz$. Here $\epsilon_j:=\mathrm{Min}(1,j)$ and $\mathcal{Z}_{\bz}^*F^*\Sigma^2TC^+(A)$ is defined as the cofiber of the multiplicative map
$$\mathcal{Z}_{\bz}^*F^*TC^-(A)\rightarrow \mathcal{Z}_{\bz}^*F^*TP(A).$$
\end{thm}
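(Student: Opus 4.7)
The plan is to build the bifiltration $\mathcal{Z}^*_{\bz}$ by rigidifying Bökstedt's computation $\pi_*THH(\bz) \cong \bz \oplus \bigoplus_{j\geq 1}\bz/j[2j-1]$ as a multiplicative $\mathbb{E}_\infty$-refinement of $F^*T?(\bz)$, and then using the $F^*T?(\bz)$-algebra structure on $F^*T?(A)$ to transfer this refinement to arbitrary $A$. The crucial technical input, flagged in the remark before the theorem, is the even-degree concentration from \cite{BMS}: $\mathrm{Fil}^*_{BMS}T?(R)^\wedge_p$ is concentrated in even degrees for $R$ quasiregular semiperfectoid, which rigidifies a double-speed Postnikov tower as a multiplicative $\mathbb{E}_\infty$-filtration on the BMS corner of the pullback of Definition \ref{defintro}. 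On the HKR/$B$ corner the $\mathcal{Z}$-refinement may be taken trivial; taking the pullback in bifiltered $\mathbb{E}_\infty$-algebras in $\widehat{DF}(\bs)$ (resp.\ $\widehat{DF}(\bs[\bt])$) then yields $\mathcal{Z}^*F^*T?(\bz)$. When $A = \bz$, the vanishing $L_{\bz/\bz} = 0$ together with $L\widehat{\Omega}_{\bz/\bz} = \bz$ ensures that the resulting bi-graded pieces match the theorem's formulas specialized to the base ring.

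For general $A$ with bounded torsion and $L_{A/\bz}$ of Tor-amplitude in $[-1,0]$, I would define $\mathcal{Z}^*F^*T?(A)$ by base change
$$\mathcal{Z}^*F^*T?(A) := \mathcal{Z}^*F^*T?(\bz) \otimes^L_{F^*T?(\bz)} F^*T?(A)$$
in bifiltered $\mathbb{E}_\infty$-algebras. Functoriality, multiplicativity, bicompleteness, biexhaustiveness, and $\mathcal{Z}^0F^*T?(A) \simeq F^*T?(A)$ are inherited from the base case. The double graded pieces are then computed by a derived base-change/convolution argument in bifiltered derived categories. Substituting the diagonal pieces $\mathrm{gr}^j_{\mathcal{Z}}\mathrm{gr}^j_F T?(\bz) \simeq \bz/j[2j-\epsilon_j]$ (together with the triviality of the $\mathcal{Z}$-refinement on the HKR/$B$ corner) and the classical descriptions $\mathrm{gr}^{n-j}_F HH(A) \simeq L\Lambda^{n-j}L_{A/\bz}[n-j]$, $\mathrm{gr}^{n-j}_F HP(A) \simeq L\widehat{\Omega}_{A/\bz}[2(n-j)]$, $\mathrm{gr}^{n-j}_F HC^-(A) \simeq L\widehat{\Omega}^{\geq n-j}_{A/\bz}[2(n-j)]$ (cf.\ \cite{Antieau18}) yields the stated formulas. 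The $\Sigma^2TC^+$ case follows by taking the cofiber of $\mathcal{Z}^*F^*TC^-(A) \to \mathcal{Z}^*F^*TP(A)$, using $L\Omega^{<m}_{A/\bz} \simeq \mathrm{cofib}(L\widehat{\Omega}^{\geq m}_{A/\bz} \to L\widehat{\Omega}_{A/\bz})$.

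The main obstacle will be producing the $\mathcal{Z}^*$-refinement coherently as a multiplicative $\mathbb{E}_\infty$-bifiltration on every corner of the pullback square of Definition \ref{defintro}, compatibly with the comparison maps and quasisyntomic descent. This reduces to rigidifying the double-speed Postnikov tower on $T?(R)^\wedge_p$ for $R$ quasiregular semiperfectoid as an $\mathbb{E}_\infty$-multiplicative filtration, which is precisely where the even-degree concentration of \cite{BMS} is essential; descent from quasiregular semiperfectoid covers to arbitrary $A$ subject to the Tor-amplitude hypothesis is then formal. A secondary subtlety is the careful book-keeping of the $[n-j]$ versus $[2(n-j)]$ shifts in the three cases $? = HH,\,TP,\,TC^-$ when verifying the convolution formula at the level of bi-graded pieces, which is a standard but not automatic computation in bifiltered derived categories.
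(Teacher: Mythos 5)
Your overall architecture (exploit B\"okstedt plus the BMS even-degree concentration, take the $\mathcal{Z}$-refinement to be trivial on the HKR/$B$ corner, and glue through the arithmetic pullback square of Definition \ref{defintro}) is in the spirit of the paper, but the step that carries all the weight --- defining $\mathcal{Z}^*F^*T?(A):=\mathcal{Z}^*F^*T?(\bz)\otimes^L_{F^*T?(\bz)}F^*T?(A)$ and computing bigraded pieces by ``derived base-change/convolution'' --- is a genuine gap, not a routine verification. For the stated bigraded pieces you would need the filtered base-change equivalences
$F^*THH(A)\otimes_{F^*THH(\bz)}\mathrm{Fil}^*_{HKR}HH(\bz)\simeq \mathrm{Fil}^*_{HKR}HH(A)$ and, for $?=P,C^-$, $F^*TP(A)\otimes_{F^*TP(\bz)}\mathrm{Fil}^*_{B}HP(\bz)\simeq \mathrm{Fil}^*_{B}HP(A)$ (and its $HC^-$ analogue). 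The first is not formal (it encodes essentially the content of the $p$-adic Theorem \ref{thmpadic}), and the second is worse: since $(-)^{t\bt}$ and $(-)^{h\bt}$ do not commute with relative tensor products, even the unfiltered statement $TP(A)\otimes_{TP(\bz)}HP(\bz)\simeq HP(A)$ is unjustified, whereas the true base-change statement lives at the level of $THH$ before applying $(-)^{t\bt}$. This is exactly where the paper has to work: in Proposition \ref{prop1}(2) the naive ``tensor up the graded pieces'' formula for $TP$ produces $R\mathrm{lim}_k\bigl((L\Omega^{<k}_{A/R})^{\wedge}_p\widehat{\otimes}_R\pi_{2j}THH(S,\bz_p)\bigr)$ rather than $L\widehat{\Omega}_{A/R}\widehat{\otimes}_R\pi_{2j}THH(S,\bz_p)$, and the discrepancy is only removed after quasisyntomic descent together with Condition \ref{condlimtens} (perfectness of $\mathrm{gr}^j_{BMS}THH(R,\bz_p)$, so that $\widehat{\otimes}$ commutes with the relevant limits); see also Remark \ref{rem-marc}. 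Your proposal never confronts this, so the $j$-graded (and in particular the $j=0$) pieces of your $TP$ and $TC^-$ bifiltrations are not known to be the stated ones.

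Two further points. First, the base case $A=\bz$ is itself not available by ``rigidifying the double-speed Postnikov tower'': the evenness of $T?(S,\bz_p)$ for $S$ quasiregular semiperfectoid rigidifies the $F$-direction locally, but the $\mathcal{Z}$-direction on $\mathrm{Fil}^*_{BMS}T?(\bz_p,\bz_p)$ has to be produced by the relative construction $(-)\widehat{\otimes}_{THH(S,\bz_p)}\tau_{\geq 2*}THH(S,\bz_p)$ over covers $\bz_p\to S$, intersected with the Postnikov filtration, and then descended along the \v{C}ech nerve (Propositions \ref{prop1} and \ref{filtQR}); moreover the input $\mathrm{gr}^n_{BMS}THH(\bz_p,\bz_p)\simeq\bz_p/n[2n-1]$ is not a citation of B\"okstedt but requires the direct argument of Section \ref{sectfirstproof} (Corollary \ref{key}). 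Second, bicompleteness is not ``inherited'' under relative tensor products: completeness is destroyed by tensoring in general, and the paper obtains it from connectivity estimates (as in Proposition \ref{prop-filt-Z} and Corollary \ref{connectivity}); you would need analogous estimates for your base-changed objects. By contrast, the paper avoids base change along $F^*T?(\bz)$ altogether: it constructs $\mathcal{Z}_{\bz_p}^*\mathrm{Fil}^*_{BMS}$ $p$-adically as above and then defines the global bifiltration by the pullback square with the trivial $\mathcal{Z}$-filtration on the HKR/$B$ corner (Definition \ref{global-bifilt}), reading off the bigraded pieces from the cartesian squares rather than from a convolution formula.
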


In order to prove Theorem \ref{thm-intro}, we first show its $p$-adic version, in which case we allow more general base rings. We refer to \cite[Section 4]{BMS} for unexplained notation and terminology.

\begin{thm}\label{thmpadic}(cf. Section \ref{sectbifiltlocal}). Let $R\in \mathrm{QSyn}_{\bz_p}$ such that $(L\Lambda^{j}L_{R/\bz_p})^{\wedge}_p$ is a perfect complex of $R$-modules for all $j\geq0$, and let $A\in \mathrm{qSyn}_{R}$. If $R$ is perfectoid or $R=\bz_p$, we allow more generally $A\in \mathrm{QSyn}_{R}$. 

Then for $?=P,C^-$, there are $\bn^{\mathrm{op}}$-indexed, functorial, multiplicative, complete and exhaustive filtrations $\mathcal{Z}_{R}^*\mathrm{Fil}^*_{BMS} THH(A,\bz_p)$ and 
$\mathcal{Z}_{R}^*\mathrm{Fil}^*_{BMS} T?(A,\bz_p)$ on $\mathrm{Fil}^*_{BMS} THH(A,\bz_p)$ and $\mathrm{Fil}^*_{BMS} T?(-,\bz_p)$ respectively, endowed with canonical equivalences of filtrations
\begin{eqnarray*}
\mathrm{gr}^j_{\mathcal{Z}_{R}} \mathrm{Fil}^{*}_{BMS}THH(A,\bz_p)&\simeq & \mathrm{Fil}^{*-j}_{BMS}HH(A/R,\bz_p)\widehat{\otimes}_{R} \mathrm{gr}^{j}_{BMS}THH(R,\bz_p)\\
\mathrm{gr}^j_{\mathcal{Z}_{R}} \mathrm{Fil}^{*}_{BMS}T?(A,\bz_p)&\simeq & \mathrm{Fil}^{*-j}_{BMS}H?(A/R,\bz_p)\widehat{\otimes}_{R} \mathrm{gr}^{j}_{BMS}THH(R,\bz_p)
\end{eqnarray*}
for any $j\in\bn$, where the first equivalence is moreover $\bt$-equivariant. 
\end{thm}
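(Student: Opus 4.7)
The plan is to realize $\mathcal{Z}_R^*$ as a Day-convolution filtration arising from base change along $R\to A$. The starting point is the $\bt$-equivariant, $p$-complete, $\mathbb{E}_\infty$-multiplicative base-change equivalence
$$THH(A)^\wedge_p \simeq HH(A/R,\bz_p) \otimes^L_{R^\wedge_p} THH(R)^\wedge_p,$$
valid for any $A\in\mathrm{qSyn}_R$, together with the observation that both sides carry compatible BMS-style motivic filtrations. The filtration $\mathcal{Z}_R^*$ will be the ``weight in the $THH(R)$-factor'' filtration on this base-change decomposition.

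The first and main step is to upgrade the above to a filtered, $\bt$-equivariant, multiplicative equivalence
$$\mathrm{Fil}_{BMS}^*THH(A,\bz_p) \simeq \mathrm{Fil}_{BMS}^*HH(A/R,\bz_p)\widehat{\otimes}_R \mathrm{Fil}_{BMS}^*THH(R,\bz_p),$$
where the right-hand side denotes the Day convolution in the $\infty$-category of complete filtered $R$-modules. The perfectness hypothesis on $(L\Lambda^j L_{R/\bz_p})^\wedge_p$ ensures that the BMS-graded pieces of $THH(R,\bz_p)$ are perfect $R$-modules, so that the Day convolution is well-behaved; for $R$ perfectoid or $R=\bz_p$, the extended generality $A\in\mathrm{QSyn}_R$ is handled via the explicit descriptions of $\mathrm{gr}_{BMS}^j THH(R,\bz_p)$ available in those cases (Breuil-Kisin twists, respectively B\"okstedt's computation of $\pi_*THH(\bz)$). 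Via quasi-syntomic descent, verification of the filtered equivalence reduces to the qrsp case, where by \cite{BMS} all the relevant BMS filtrations are double-speed Postnikov filtrations on rings concentrated in even degrees, and the filtered base change becomes a multiplicative K\"unneth-type statement for such evenly-concentrated $p$-complete rings. I then define
$$\mathcal{Z}_R^j\mathrm{Fil}_{BMS}^* THH(A,\bz_p) := \mathrm{Fil}_{BMS}^*HH(A/R,\bz_p)\widehat{\otimes}_R \mathrm{Fil}_{BMS}^{\geq j}THH(R,\bz_p)$$
as a subfiltration. Functoriality, multiplicativity, completeness and exhaustiveness are all inherited from the two convolved BMS filtrations, and a formal computation of associated gradeds for a Day convolution yields
$$\mathrm{gr}^j_{\mathcal{Z}_R}\mathrm{Fil}_{BMS}^*THH(A,\bz_p) \simeq \mathrm{Fil}_{BMS}^{*-j}HH(A/R,\bz_p)\widehat{\otimes}_R \mathrm{gr}_{BMS}^j THH(R,\bz_p),$$
$\bt$-equivariantly. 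The variants $?=P,C^-$ follow by applying the lax symmetric monoidal functors $(-)^{t\bt}$ and $(-)^{h\bt}$ respectively to this filtered $\bt$-equivariant equivalence: since the residual $\bt$-action on $\mathrm{gr}_{BMS}^j THH(R,\bz_p)$ is trivial (the graded pieces being prismatic cohomology of $R$ with Breuil-Kisin twists), the fixed-point functors act only on the first factor of the Day convolution and replace $HH(A/R,\bz_p)$ by $H?(A/R,\bz_p)$.

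The main obstacle is the filtered base-change equivalence of the second paragraph. At the level of underlying unfiltered objects the base change is classical, but identifying the BMS filtration on $THH(A,\bz_p)$ with the Day convolution of the two BMS filtrations requires enough multiplicative coherence at the qrsp level to then descend quasi-syntomically; the perfectness hypothesis on $(L\Lambda^j L_{R/\bz_p})^\wedge_p$ is precisely what prevents completion pathologies from obstructing this descent.
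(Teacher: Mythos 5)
Your argument rests on a claim that is false: there is no equivalence $THH(A,\bz_p)\simeq HH(A/R,\bz_p)\widehat{\otimes}_{R}THH(R,\bz_p)$, not even non-equivariantly and at the level of homotopy groups. The classical base-change formula goes the other way, $HH(A/R,\bz_p)\simeq THH(A,\bz_p)\widehat{\otimes}_{THH(R,\bz_p)}R$, and it does not split $THH(A,\bz_p)$ into a tensor product. Concretely, take $R=\bz_p$ and $A$ perfectoid (a case inside your own reduction to quasiregular semiperfectoid rings): $THH(A,\bz_p)\simeq A[u]$ is concentrated in even degrees by \cite[Theorem 6.1]{BMS}, whereas $HH(A/\bz_p,\bz_p)\widehat{\otimes}_{\bz_p}THH(\bz_p,\bz_p)$ has nonzero odd homotopy, e.g. the summand $\pi_0HH(A/\bz_p,\bz_p)\otimes\pi_{2p-1}THH(\bz_p,\bz_p)\simeq A/p$ in degree $2p-1$. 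Similarly, for $A=\bz_p[\zeta_p]\in\mathrm{qSyn}_{\bz_p}$ and $p\geq 3$, Lindenstrauss--Madsen give $\pi_{2p-1}THH(A,\bz_p)\simeq\mathcal{D}^{-1}/p\mathcal{O}\simeq\mathcal{O}/\pi^{2p-3}$, which contains $p^{2}$-torsion, while the K\"unneth computation of the right-hand side yields $\mathcal{O}/\mathcal{D}\oplus\mathcal{O}/p$, killed by $p$; so the relevant extension is genuinely non-split. Consequently the ``filtered base-change equivalence'' you want to prove has the wrong underlying object (the colimit of your Day convolution is $HH(A/R,\bz_p)\widehat{\otimes}_RTHH(R,\bz_p)$, not $THH(A,\bz_p)$), your proposed $\mathcal{Z}_R^j$ is not a filtration on $\mathrm{Fil}^*_{BMS}THH(A,\bz_p)$, and no perfectness or coherence hypothesis can repair this: the entire content of the theorem is that the tensor decomposition exists only on the associated graded of a filtration that must be constructed without presupposing a splitting.

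The paper constructs that filtration intrinsically: after passing to a quasisyntomic cover $R\to S$ with $S$ quasiregular semiperfectoid, one sets $\mathcal{Z}_S^{*}THH(A\widehat{\otimes}_RS,\bz_p):=THH(A\widehat{\otimes}_RS,\bz_p)\widehat{\otimes}_{THH(S,\bz_p)}\tau_{\geq 2*}THH(S,\bz_p)$, identifies the graded pieces with $HH(A/R,\bz_p)\widehat{\otimes}_R\pi_{2j}THH(S,\bz_p)[2j]$ using evenness and base change, cuts out the bifiltration by $\tau_{\geq 2i}$, and then descends along the \v{C}ech nerve of $R\to S$ and unfolds over the quasisyntomic site (Propositions \ref{prop1} and \ref{filtQR}). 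A second point you elide would be a gap even granting a splitting: $(-)^{t\bt}$ and $(-)^{h\bt}$ are only lax symmetric monoidal and do not commute with the limits hidden in $\widehat{\otimes}_R$, so for $TP$ and $TC^-$ the natural map from $\mathrm{Fil}^{*-j}_{BMS}H?(A/R,\bz_p)\widehat{\otimes}_R\mathrm{gr}^{j}$ to the graded piece is a priori only a Hodge-completion map (Proposition \ref{prop1}(2), Remark \ref{rem-marc}); it becomes an equivalence only after the descent step, using that $(-)\widehat{\otimes}_R\mathrm{gr}^{j}_{BMS}THH(R,\bz_p)$ commutes with small limits (Condition \ref{condlimtens}), which is precisely where the perfectness hypothesis, the perfectoid case, and B\"okstedt's computation via Corollary \ref{key} enter.
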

The equivalences of Theorem \ref{thmpadic} are multiplicative in the obvious sense, and (seem to be) compatible with weights for the Adams operations on graded pieces, see \cite[Section 9.4]{BMS}.

Now we go back to global rings. We mention the following result since it gives what was expected in view of \cite{Lindenstrauss-Madsen00} and (\ref{C(oF)}).
\begin{cor}\label{corintro}
Let $F$ be a number field. Then we have
$$F^{*}THH(\mathcal{O}_F)\simeq \tau_{\geq 2*-1} THH(\mathcal{O}_F)$$
and
$$F^{*}\Sigma^2TC^+(\mathcal{O}_F)\simeq \tau_{\geq 2*-1} \Sigma^2TC^+(\mathcal{O}_F).$$
\end{cor}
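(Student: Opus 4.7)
The plan is to match $F^* THH(\mathcal{O}_F)$ with the double-speed Postnikov filtration by computing graded pieces, taking as input Lindenstrauss-Madsen's computation of $\pi_* THH(\mathcal{O}_F)$ (which is $\mathcal{O}_F$ in degree $0$ and torsion in odd positive degrees, vanishing elsewhere) together with Theorem \ref{thm-intro}. For $n \leq 0$ the claim is immediate: $F^n THH(\mathcal{O}_F) = THH(\mathcal{O}_F) = \tau_{\geq 2n-1} THH(\mathcal{O}_F)$ by exhaustiveness of $F^*$ and connectivity of $THH(\mathcal{O}_F)$, so all the work lies in the case $n \geq 1$.

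The first step is to note that $L_{\mathcal{O}_F/\bz} \simeq \Omega^1_{\mathcal{O}_F/\bz}$ is concentrated in degree $0$ as a torsion $\mathcal{O}_F$-module supported on the ramified primes, since $\mathcal{O}_F/\bz$ is lci (locally a hypersurface at each ramified prime). Plugging this into Theorem \ref{thm-intro}, the only potentially nonzero $\mathcal{Z}_\bz$-graded pieces of $\mathrm{gr}^n_F THH(\mathcal{O}_F)$ arise for $j=0$, contributing $L\Lambda^n \Omega^1_{\mathcal{O}_F/\bz}[n]$ in degrees $\leq n$, and for $2 \leq j \leq n$, contributing $L\Lambda^{n-j} \Omega^1_{\mathcal{O}_F/\bz} \otimes^L_\bz \bz/j [n+j-1]$ in degrees $\leq n+j-1$. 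All pieces sit in degrees $\leq 2n-1$, with the top shift achieved at $j = n$ by $\mathcal{O}_F/n[2n-1]$.

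The core of the proof is to show that $\mathrm{gr}^n_F THH(\mathcal{O}_F)$ is in fact concentrated in degree $2n-1$, with homotopy matching $\pi_{2n-1} THH(\mathcal{O}_F)$. The cleanest route is via the pullback in Definition \ref{defintro}: after $p$-adic completion, $F^n THH(\mathcal{O}_F)^{\wedge}_p \simeq \mathrm{Fil}^n_{BMS} THH((\mathcal{O}_F)^{\wedge}_p, \bz_p) = \prod_{\mathfrak{p} \mid p} \mathrm{Fil}^n_{BMS} THH(\mathcal{O}_{F,\mathfrak{p}}^{\wedge}, \bz_p)$, and for each mixed-characteristic complete DVR factor $\mathcal{O}_K$ the BMS filtration coincides with the double-speed Postnikov (since $\pi_* THH(\mathcal{O}_K,\bz_p)$ is concentrated in degree $0$ and odd positive). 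Rationally, $F^n THH(\mathcal{O}_F)_\bq \simeq \mathrm{Fil}^n_{HKR} HH(\mathcal{O}_F)_\bq$ vanishes for $n \geq 1$ because $F/\bq$ is étale, in agreement with $\tau_{\geq 2n-1} THH(\mathcal{O}_F)_\bq$. The arithmetic fracture square then yields $F^n THH(\mathcal{O}_F) \simeq \tau_{\geq 2n-1} THH(\mathcal{O}_F)$.

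The main obstacle is the identification at the ramified DVR level, which amounts to showing that the differentials in the $\mathcal{Z}_{\bz_p}$-spectral sequence of Theorem \ref{thmpadic} precisely cancel the contributions outside degree $2n-1$; this is where the cyclic structure of $\Omega^1_{\mathcal{O}_K/\bz_p}$ and the derived exterior powers $L\Lambda^k$ of torsion modules play the decisive role. The statement for $F^* \Sigma^2 TC^+(\mathcal{O}_F)$ then follows by the same strategy, applied to the defining cofiber sequence $F^n TC^-(\mathcal{O}_F) \to F^n TP(\mathcal{O}_F) \to F^n \Sigma^2 TC^+(\mathcal{O}_F)$, using the analogous graded pieces $L\widehat{\Omega}^{\geq n-j}_{\mathcal{O}_F/\bz}$, $L\widehat{\Omega}_{\mathcal{O}_F/\bz}$, and $L\Omega^{<n-j}_{\mathcal{O}_F/\bz}$ provided by Theorem \ref{thm-intro}.
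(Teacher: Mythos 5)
There is a genuine gap, and it sits exactly at the step you yourself flag as "the main obstacle": you reduce everything to showing that $\mathrm{gr}^n_F THH(\mathcal{O}_F)$ (equivalently, each local $\mathrm{gr}^n_{BMS}THH(\mathcal{O}_K,\bz_p)$) is concentrated in degree $2n-1$, and then you do not prove it — you only assert that "the differentials in the $\mathcal{Z}_{\bz_p}$-spectral sequence precisely cancel the contributions outside degree $2n-1$", with the cyclic structure of $\Omega^1$ playing "the decisive role". No such analysis of differentials is given, and in fact that is not how the statement is obtained: in the paper's argument the relevant spectral sequence has \emph{no} differentials at all. The two ingredients that actually close the argument are (i) the fact that $L\Lambda^{i}L_{\mathcal{O}_F/\bz}$ is concentrated in homological degree $i-1$ for $i>0$ (\cite[Proposition 5.36]{Flach-Morin18}), which your degree bookkeeping does not use (your bounds "$\leq n$" for the $j=0$ piece and "$\leq n+j-1$" for the others are off; the correct consequence is that every $\mathcal{Z}_{\bz}$-graded piece, hence $\mathrm{gr}^n_FTHH(\mathcal{O}_F)$ itself, lies in homological degrees $[2n-2,2n-1]$), and (ii) the resulting degeneration, for degree reasons, of the filtration spectral sequence converging to $\pi_*THH(\mathcal{O}_F)$, so that the Lindenstrauss--Madsen vanishing $\pi_{2n-2}THH(\mathcal{O}_F)=0$ for $n\geq 2$ \emph{forces} the degree-$(2n-2)$ part of $\mathrm{gr}^n_FTHH(\mathcal{O}_F)$ to vanish. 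Thus the even-degree vanishing is imported from the known computation of $\pi_*THH(\mathcal{O}_F)$, not derived from intrinsic properties of $L\Lambda^k$ of torsion modules; the paper proves the "BMS $=$ double-speed Postnikov" statement only for the base $\bz_p$ (Corollary \ref{key}, via Proposition \ref{thmkey}), and your parenthetical claim for every ramified $\mathcal{O}_K$ is left unproven. Note also that your fracture-square step presupposes a comparison map $F^*THH(\mathcal{O}_F)\to \tau_{\geq 2*-1}THH(\mathcal{O}_F)$, whose existence already requires the connectivity statement you have not established.

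The second assertion also does not "follow by the same strategy" from the cofiber sequence and Theorem \ref{thm-intro} alone. The graded pieces $\mathrm{gr}^j_{\mathcal{Z}_\bz}\mathrm{gr}^n_F\Sigma^2TC^+(\mathcal{O}_F)\simeq L\Omega^{<n-j}_{\mathcal{O}_F/\bz}\otimes^L_{\bz}\bz/j[2n-\epsilon_j]$ a priori reach homological degree $2n-2$, so the same concentration problem recurs, and this time there is no preexisting computation of $\pi_*\Sigma^2TC^+(\mathcal{O}_F)$ to feed into the abutment. The paper instead proves that $L\Omega^{<n}_{\mathcal{O}_F/\bs}$ is a perfect complex concentrated in cohomological degrees $[0,1]$ by passing to $p$-completions and using the finite filtration of $(L\Omega^{<n}_{\mathcal{O}_F/\bs})^{\wedge}_p$ with graded pieces $\mathcal{N}^{i}(\mathcal{O}_F)^{\wedge}_p$, supplied by Propositions \ref{pcomplTHH}, \ref{propmotfiltTP} and \cite[Theorem 1.12]{BMS} (the Nygaard comparison), i.e.\ it reuses the structure of the $\mathcal{N}^{i}(\mathcal{O}_F)$ already established in the first part; only then does the degenerate spectral sequence identify $\pi_{2n}$ and $\pi_{2n-1}$ of $\Sigma^2TC^+(\mathcal{O}_F)$ with $H^0$ and $H^1$ of the graded pieces and show that the map to $\tau_{\geq 2*-1}\Sigma^2TC^+(\mathcal{O}_F)$ is an equivalence. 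You would need to supply this (or an equivalent) argument for both halves of the statement.
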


It follows from Theorem \ref{thm-intro} that $\mathcal{Z}_{\bz}^jF^{n}\Sigma^2TC^+(A)\in \mathrm{Sp}_{\geq n+j}$ hence that the $DBF(\bs)$-valued functor $A\mapsto\mathcal{Z}_{\bz}^*F^*\Sigma^2TC^+(A)$ is left Kan extended from finitely generated polynomial $\bz$-algebras, see Corollary \ref{connectivity}, where $DBF(\bs)=\mathrm{Fun}(\bz^{\mathrm{op}}\times\bz^{\mathrm{op}}, \mathrm{Sp})$ denotes the $\infty$-category of bifiltered spectra. By left Kan extension we obtain a bicomplete biexhaustive bifiltration $\mathcal{Z}_{\bz}^*F^*\Sigma^2TC^+(A)$ for any commutative ring $A$, which moreover satisfies fpqc-descent, see Corollary \ref{corfiltS}.
\begin{defn}\label{defn/S}
We consider the fpqc-sheaf $$A\mapsto L\Omega_{A/\bs}^{<n}:=\mathrm{gr}^{n}_{F} \Sigma^2TC^+(A)[-2n]$$ and we define
$R\Gamma(\X,L\Omega_{\X/\bs}^{<n}):=R\Gamma(\X,L\Omega_{-/\bs}^{<n})$
by Zariski descent, for any scheme $\X$ and any $n\in\bz$.
\end{defn}

We have an equivalence
$$R\Gamma(\X,L\Omega_{\X/\bs}^{<n})\otimes^L_{\bz}\bq\stackrel{\sim}{\rightarrow} R\Gamma(\X_{\bq},L\Omega_{\X_{\bq}/\bq}^{<n}).$$ 
Moreover, for any prime number $p$, we have an equivalence
$$R\Gamma(\X,L\Omega_{\X/\bs}^{<n})^{\wedge}_p\simeq R\Gamma(\X^{\wedge}_p,\widehat{\Prism}_{\X^{\wedge}_p}\{n\}/\mathcal{N}^{\geq n})$$
where $\X^{\wedge}_p$ is the formal $p$-adic completion of $\X$, $\widehat{\Prism}_{\X^{\wedge}_p}\{n\}$ is the complex defined in \cite{BMS} and $\mathcal{N}^{\geq n}$ denotes its Nygaard filtration.

\subsection{The correcting factor and Zeta-values}
The Zeta-value conjecture formulated in \cite{Flach-Morin18} involves $R\Gamma_{W,c}(\X,\bz(n))$, $R\Gamma(\X,L\Omega^{<n}_{\X/\bz})$, the trivialization $\lambda_{\infty}$, and a correction factor $$C(\X,n):=\prod_{p<\infty} C_p(\X,n)\in\bq^{\times}$$ which is defined using $p$-adic Hodge theory. The corresponding Zeta-value conjecture \cite[Conjecture 5.12]{Flach-Morin18} is compatible with the Tamagawa number conjecture of Bloch-Kato and Fontaine-Perrin-Riou by \cite[Theorem 5.27]{Flach-Morin18}.  On the other hand, the Zeta-value conjecture formulated in \cite{Flach-Morin20} involves $R\Gamma_{W,c}(\X,\bz(n))$, $R\Gamma(\X,L\Omega^{<n}_{\X/\bz})$, the trivialization $\lambda_{\infty}$, and the following explicit correction factor 
$$C_{\infty}(\X,n):=\Prod_{i<n;j}(n-1-i)!^{(-1)^{i+j}\mathrm{dim}_{\bq}H^j(\X_{\bq},\Omega^{i})}$$
which we denote here by $C_{\infty}(\X,n)$ in order to distinguish it from $C(\X,n)^{-1}$. 
The corresponding Zeta-value conjecture \cite[Conjecture 1.1]{Flach-Morin20} is compatible with the functional equation of Zeta-functions by \cite[Theorem 1.3]{Flach-Morin20}. We expect that the rational numbers $C_{\infty}(\X,n)$ and $C(\X,n)^{-1}$ agree for arbitrary $\X$ and arbitrary $n$, but we do not address this question in the present paper, although we hope to return to it in future work.

Using Theorem \ref{thm-intro}, we obtain the following result.
\begin{cor}\label{corCXnintro}(cf. Section \ref{sectionC}).
Let $\mathcal{X}/\bz$ be a regular proper scheme over $\mathrm{Spec}(\bz)$.  Then $R\Gamma(\X,L\Omega_{\X/\bs}^{<n})$ is a perfect complex of abelian groups given with a fiber sequence
$$R\Gamma(\X, \mathcal{Z}^1L\Omega_{\X/\bs}^{<n})\rightarrow R\Gamma(\X,L\Omega_{\X/\bs}^{<n})\rightarrow R\Gamma(\X,L\Omega_{\X/\bz}^{<n})$$
where $R\Gamma(\X, \mathcal{Z}^1L\Omega_{\X/\bs}^{<n})$ has finite cohomology groups such that
$$\Prod_{i\in\bz}\mid H^{i}(\X,\mathcal{Z}^1L\Omega_{\X/\bs}^{<n})\mid^{(-1)^{i}}=C_{\infty}(\X,n)^{-1}.$$
\end{cor}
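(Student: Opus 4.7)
The plan is to apply Theorem \ref{thm-intro} to $?=\Sigma^2TC^+$ and then shift by $[-2n]$ as in Definition \ref{defn/S}, which yields a bicomplete biexhaustive multiplicative filtration $\mathcal{Z}^*_{\bz}$ on $L\Omega^{<n}_{A/\bs}$ whose graded pieces are $\mathrm{gr}^0_{\mathcal{Z}_{\bz}}L\Omega^{<n}_{A/\bs}\simeq L\Omega^{<n}_{A/\bz}$ and, for $j\geq 1$, $\mathrm{gr}^j_{\mathcal{Z}_{\bz}}L\Omega^{<n}_{A/\bs}\simeq L\Omega^{<n-j}_{A/\bz}\otimes^L_{\bz}\bz/j\,[-1]$. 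Since $L\Omega^{<m}_{A/\bz}=0$ for $m\leq 0$, these graded pieces vanish for $j\geq n$, and completeness then forces $\mathcal{Z}^n=0$, so the filtration is finite with $n$ nontrivial layers. After fpqc descent (which the bifiltration satisfies by Corollary \ref{corfiltS}), the same filtration lives on $R\Gamma(\X,L\Omega^{<n}_{\X/\bs})$, and the fiber sequence in the statement is precisely $\mathcal{Z}^1\to\mathcal{Z}^0\to\mathrm{gr}^0$, using $\mathcal{Z}^0_{\bz}L\Omega^{<n}_{\X/\bs}\simeq L\Omega^{<n}_{\X/\bs}$ from Theorem \ref{thm-intro}.

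For perfectness and finiteness I argue by induction along this finite filtration. Since $\X/\bz$ is regular proper, $R\Gamma(\X,L\Omega^{<m}_{\X/\bz})$ is a perfect complex of $\bz$-modules (as used throughout \cite{Flach-Morin18}); for $j\geq 1$, the derived mod-$j$ reduction $R\Gamma(\X,L\Omega^{<n-j}_{\X/\bz}\otimes^L_{\bz}\bz/j\,[-1])$ is therefore a perfect complex of \emph{finite} abelian groups. Iterated extensions along $\mathcal{Z}^{\geq 1}$ give that $R\Gamma(\X,\mathcal{Z}^1L\Omega^{<n}_{\X/\bs})$ is perfect with finite cohomology, and the displayed fiber sequence then yields perfectness of $R\Gamma(\X,L\Omega^{<n}_{\X/\bs})$.

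For the equality with $C_\infty(\X,n)^{-1}$, the key input is the elementary identity $\prod_i|H^i(C\otimes^L_{\bz}\bz/j)|^{(-1)^i}=j^{\chi_{\bq}(C)}$ for any perfect complex $C$ of abelian groups, where $\chi_{\bq}(C):=\sum_i(-1)^i\dim_{\bq}H^i(C\otimes\bq)$. Multiplicativity of the Euler characteristic along $\mathcal{Z}^{\geq 1}$, combined with the inversion caused by the $[-1]$ shift, reduces the alternating product of cardinalities of $H^*(\X,\mathcal{Z}^1L\Omega^{<n}_{\X/\bs})$ to $\prod_{j=1}^{n-1}j^{-\chi_{\bq}(R\Gamma(\X,L\Omega^{<n-j}_{\X/\bz}))}$. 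Since $\X_{\bq}$ is smooth, the Hodge filtration rationally gives $\chi_{\bq}(R\Gamma(\X,L\Omega^{<m}_{\X/\bz}))=\sum_{i=0}^{m-1}\sum_k(-1)^{i+k}h^{i,k}$ with $h^{i,k}:=\dim_{\bq}H^k(\X_{\bq},\Omega^i)$. Exchanging the order of summation (with $i$ fixed, $j$ runs from $1$ to $n-1-i$) collapses the exponent to $\log((n-1-i)!)$, producing $\prod_{i=0}^{n-2}(n-1-i)!^{-\sum_k(-1)^{i+k}h^{i,k}}=C_\infty(\X,n)^{-1}$, the $i=n-1$ term dropping since $0!=1$. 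The main obstacle is largely clerical: tracking the $[-1]$ shift (which inverts the Euler characteristic) and executing the combinatorial reindexing cleanly; the substantive input is contained in Theorem \ref{thm-intro}.
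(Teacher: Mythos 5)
Your proposal is correct and follows essentially the same route as the paper: descend the bifiltration of Theorem \ref{thm-intro} (via Corollary \ref{corfiltS}, giving Corollary \ref{corfilproper}) to $\X$, take the fiber sequence $\mathcal{Z}^1\to\mathcal{Z}^0\to\mathrm{gr}^0$, deduce perfectness and finiteness from the finite filtration with graded pieces $R\Gamma(\X,L\Omega^{<n-j}_{\X/\bz})\otimes^L_{\bz}\bz/j[-1]$, and compute the multiplicative Euler characteristic via the identity $\chi^{\times}(C\otimes^L_{\bz}\bz/j)=j^{\chi_{\bq}(C)}$ (the paper's Lemma \ref{Lem-multadd}) together with the rational Hodge decomposition and the reindexing that produces the factorials. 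The only cosmetic difference is that you apply the counting lemma before unwinding the Hodge filtration, whereas the paper unwinds the Hodge filtration first and then applies the lemma to each $R\Gamma(\X,L\Lambda^iL_{\X/\bz})$; the sign bookkeeping from the shift $[-1]$ and the collapse to $(n-1-i)!$ agree with the paper's computation.
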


Corollary \ref{corCXnintro} and \cite{Morin16} immediately give the following
\begin{cor}\label{Milne}
Let $\X$ be a smooth proper scheme over the finite field $\mathbb{F}_q$. Then $R\Gamma(\X,L\Omega_{\X/\bs}^{<n})$ is a perfect complex with finite cohomology groups and we have 
$$\Prod_{i\in\bz}\mid H^{i}(\X,L\Omega_{\X/\bs}^{<n})\mid^{(-1)^{i}}=q^{\chi(\X/\mathbb{F}_q,\mathcal{O}_{\X},n)}$$ 
where the right hand side is Milne's correcting factor \cite{Milne86}.
\end{cor}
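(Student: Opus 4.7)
The plan is to apply Corollary \ref{corCXnintro} directly. Since $\X$ is smooth and proper over $\mathbb{F}_q$ and $\s(\mathbb{F}_q)\to\s(\bz)$ is finite, $\X$ is a regular scheme proper over $\s(\bz)$, so the hypotheses of that corollary are met. The first observation to make is that the generic fibre $\X_{\bq}=\X\otimes_{\bz}\bq$ is empty because $p$ is nilpotent on $\X$; consequently the Hodge numbers $\mathrm{dim}_{\bq} H^j(\X_{\bq},\Omega^i)$ vanish for all $i,j$, and the correction factor
$$C_{\infty}(\X,n)=\Prod_{i<n;j}(n-1-i)!^{(-1)^{i+j}\mathrm{dim}_{\bq}H^j(\X_{\bq},\Omega^i)}$$
reduces to an empty product equal to $1$.

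Next I would feed this back into Corollary \ref{corCXnintro}, which asserts that $R\Gamma(\X,L\Omega_{\X/\bs}^{<n})$ is a perfect complex of abelian groups and produces the fibre sequence
$$R\Gamma(\X, \mathcal{Z}^1L\Omega_{\X/\bs}^{<n})\rightarrow R\Gamma(\X,L\Omega_{\X/\bs}^{<n})\rightarrow R\Gamma(\X,L\Omega_{\X/\bz}^{<n})$$
together with the equality $\prod_i|H^i(\X,\mathcal{Z}^1L\Omega_{\X/\bs}^{<n})|^{(-1)^i}=C_{\infty}(\X,n)^{-1}=1$. To conclude I would appeal to \cite{Morin16}, where it is established that for $\X$ smooth proper over $\mathbb{F}_q$ the complex $R\Gamma(\X,L\Omega^{<n}_{\X/\bz})$ is perfect with finite cohomology groups whose alternating product of orders equals Milne's correcting factor $q^{\chi(\X/\mathbb{F}_q,\mathcal{O}_\X,n)}$. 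The finiteness of the cohomology of $R\Gamma(\X,L\Omega_{\X/\bs}^{<n})$ then follows from the long exact sequence associated with the fibre sequence, and multiplicativity of orders of finite abelian groups in that sequence gives
$$\prod_{i\in\bz}|H^i(\X,L\Omega_{\X/\bs}^{<n})|^{(-1)^i}=1\cdot q^{\chi(\X/\mathbb{F}_q,\mathcal{O}_\X,n)}=q^{\chi(\X/\mathbb{F}_q,\mathcal{O}_\X,n)}.$$

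Essentially the entire argument is formal once Corollary \ref{corCXnintro} and \cite{Morin16} are in hand; the finite-field case is the one where each of the two factors entering the fibre sequence is independently controlled. The only point needing genuine attention is to locate in \cite{Morin16} the precise statement identifying $\prod_i |H^i(\X,L\Omega^{<n}_{\X/\bz})|^{(-1)^i}$ with the Milne factor $q^{\chi(\X/\mathbb{F}_q,\mathcal{O}_\X,n)}$ in the form needed here, but this is precisely the kind of computation performed there and so should not pose a real obstacle.
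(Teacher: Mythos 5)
Your argument is correct and coincides with the paper's intended proof: the paper derives Corollary \ref{Milne} immediately from Corollary \ref{corCXnintro} together with the main result of \cite{Morin16}, exactly as you do, with the observation that $\X_{\bq}=\emptyset$ forces $C_{\infty}(\X,n)=1$ and multiplicativity of $\chi^{\times}$ in the fiber sequence doing the rest. The statement you need from \cite{Morin16} is precisely its main theorem, namely $\prod_{i}\mid H^{i}(\X,L\Omega^{<n}_{\X/\bz})\mid^{(-1)^{i}}=q^{\chi(\X/\mathbb{F}_q,\mathcal{O}_{\X},n)}$ for $\X$ smooth proper over $\mathbb{F}_q$, so no gap remains.
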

We now recall some constructions from \cite{Flach-Morin18}. Let $\X$ be a regular proper scheme over $\mathrm{Spec}(\bz)$. We denote by $\mathcal{X}_{\infty}$ the quotient topological space $\X(\bc)/G_\br$, where $\X(\bc)$ is endowed with the complex topology and $G_{\br}$ is the Galois group of $\bc/\br$. The space $\X_{\infty}$ is the fiber of the Artin-Verdier compactification $\overline{\X}_{et}$ of the étale topos $\X_{et}$ over the archimedean prime
$\infty\in\overline{\mathrm{Spec}(\bz)}_{et}$. We consider the morphism of topoi
$$\pi:\mathrm{Sh}(G_{\br},\X(\bc))\rightarrow \mathrm{Sh}(\mathcal{X}_{\infty})$$
where $\mathrm{Sh}(G_{\br},\X(\bc))$ is the topos of $G_{\br}$-equivariant sheaves on $\X(\bc)$ and  $\mathrm{Sh}(\X_{\infty})$ is the topos of sheaves on $\X_{\infty}$.
For any $n\geq 0$,  we define
$$i_{\infty}^*\mathbb{Z}(n):=\tau^{\leq n}R\pi_*(2i\pi)^n\bz$$
where $(2i\pi)^n\bz$ is seen as a $G_{\br}$-equivariant sheaf on $\X(\bc)$. Note that, if $a>0$ then $R^{a}\pi_*(2i\pi)^n\bz$ is a sheaf supported on the closed subspace $\X(\br)\subseteq\X_{\infty}$ with stalks $H^{a}(G_{\br}, (2i\pi)^n\bz)$. For $n<0$, we consider the continuous map of topological spaces 
$$f:(\mathbb{A}^{-n}_{\X})_{\infty}:=\mathbb{A}^{-n}_{\X}(\bc)/G_{\br}\rightarrow \X_{\infty}$$
and we set $$i_{\infty}^*\mathbb{Z}(n):=Rf_!\bz[-2n].$$
Then we define the perfect complex of abelian groups
$$R\Gamma_{W}(\mathcal{X}_{\infty},\mathbb{Z}(n)):=R\Gamma(\mathcal{X}_{\infty},i_{\infty}^*\mathbb{Z}(n)) \textrm{ for any }n\in\bz$$
where the right hand side is the hypercohomology of the topological space $\mathcal{X}_{\infty}$ with coefficients in the complex of sheaves $i_{\infty}^*\mathbb{Z}(n)$ defined above.
For any $n\in \bz$, we have an equivalence
\begin{equation}\label{betti}
R\Gamma_{W}(\mathcal{X}_{\infty},\mathbb{Z}(n))\otimes_{\bz}\br\simeq R\Gamma(\X(\bc),(2i\pi)^n\br)^{G_{\br}}.
\end{equation}
Assume moreover that $\X$ is connected of dimension $d$. We define the invertible $\bz$-module 
\begin{eqnarray*}
\Xi(\mathcal{X}/\mathbb{S},n)&:=&\mathrm{det}_{\mathbb{Z}}R\Gamma_{W}(\mathcal{X}_{\infty},\mathbb{Z}(n))\otimes_{\bz}\mathrm{det}^{-1}_{\mathbb{Z}} R\Gamma(\X,L\Omega_{\X/\bs}^{<n}) \\
& &\otimes_{\bz}  \mathrm{det}^{-1}_{\mathbb{Z}}R\Gamma_{W}(\mathcal{X}_{\infty},\mathbb{Z}(d-n))
\otimes_{\bz}\mathrm{det}_{\mathbb{Z}}R\Gamma(\X,L\Omega_{\X/\bs}^{<d-n}).
\end{eqnarray*}
It follows from (\ref{betti}) and from Corollary \ref{corCXnintro} that duality for Deligne cohomology  \cite[Lemma 2.3(b)]{Flach-Morin18} yields a canonical isomorphism 
$$\xi_{\mathcal{X}/\mathbb{S},n}:\mathbb{R}\stackrel{\sim}{\longrightarrow} \Xi(\mathcal{X}/\mathbb{S},n)\otimes\mathbb{R}$$
and we denote by $\mathrm{det}(\xi_{\mathcal{X}/\mathbb{S},n})$ its determinant computed with respect to the basis $1\in\br$ and a generator of $\Xi(\mathcal{X}/\mathbb{S},n)$, so that $\mathrm{det}(\xi_{\mathcal{X}/\mathbb{S},n})\in \br^{\times}/\{\pm 1\}$ is well defined up to sign.

We denote by $\zeta^*(\mathcal{X}_\infty,n)$ the leading Taylor coefficient at $s=n$ of the archimedean Euler factor of $\X$, see \cite[Section 4]{Flach-Morin20}.
Finally, we denote by $A(\X)$ the Bloch conductor (see \cite{Bloch87} and \cite[Definition 3.2]{Flach-Morin20}) if $\X/\bz$ is flat, and we set $A(\X):=q^{-\chi(\X/\mathbb{F}_q)}$ if $\X$ lies over the finite field $\mathbb{F}_q$, where $\chi(\X/\mathbb{F}_q)$ is defined in \cite[Section 10]{Milne86} in various ways.

The following result follows immediately from Corollay \ref{corCXnintro} and \cite[Theorem 1.2]{Flach-Morin20} (respectively from Corollary \ref{Milne} and \cite[Lemma 10.1]{Milne86}) if $\X/\bz$ is flat (respectively if $\X$ lies over a finite field).
\begin{thm}\label{thmintro} (Joint with Flach \cite{Flach-Morin20})
Let $\X$ be a regular connected scheme of dimension $d$, proper over $\mathrm{Spec}(\bz)$.  We have 
\begin{equation*}
\mathrm{det}(\xi_{\mathcal{X}/\mathbb{S},n})=\pm A(\mathcal{X})^{n-d/2} \cdot \frac{\zeta^*(\mathcal{X}_\infty,n)}{\zeta^*(\mathcal{X}_\infty,d-n)}.
\end{equation*}
\end{thm}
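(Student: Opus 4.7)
The plan is to separate according to whether $\X$ is flat over $\mathrm{Spec}(\bz)$ or lies over a finite field, since these are the only possibilities for a regular proper $\bz$-scheme, and to use Corollary \ref{corCXnintro} (resp.\ Corollary \ref{Milne}) to reduce each case to a statement already proved elsewhere.

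Suppose first that $\X/\bz$ is flat. The fiber sequence of Corollary \ref{corCXnintro}
$$R\Gamma(\X, \mathcal{Z}^1L\Omega_{\X/\bs}^{<n})\rightarrow R\Gamma(\X,L\Omega_{\X/\bs}^{<n})\rightarrow R\Gamma(\X,L\Omega_{\X/\bz}^{<n})$$
gives a canonical isomorphism of determinant lines, and since the first term is torsion, passing to $\br$-coefficients produces a canonical isomorphism $\mathrm{det}_{\bz}R\Gamma(\X,L\Omega_{\X/\bs}^{<n})\otimes\br\simeq\mathrm{det}_{\bz}R\Gamma(\X,L\Omega_{\X/\bz}^{<n})\otimes\br$ whose determinant with respect to the natural $\bz$-bases on both sides is $\pm\prod_i|H^i(\X,\mathcal{Z}^1L\Omega_{\X/\bs}^{<n})|^{(-1)^i}=\pm C_{\infty}(\X,n)^{-1}$. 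Applying this for both $n$ and $d-n$ and comparing $\Xi(\X/\bs,n)$ with its counterpart $\Xi(\X/\bz,n)$ built from $R\Gamma(\X,L\Omega_{\X/\bz}^{<?})$, the canonical trivializations fit into a commutative diagram whose vertical comparison contributes the factor $C_{\infty}(\X,n)^{-1}\cdot C_{\infty}(\X,d-n)$. Feeding this into \cite[Theorem 1.2]{Flach-Morin20}, which is exactly the formula for $\mathrm{det}(\xi_{\X/\bz,n})$ multiplied by the correction ratio $C_{\infty}(\X,n)/C_{\infty}(\X,d-n)$, yields the desired identity for $\mathrm{det}(\xi_{\X/\bs,n})$.

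Now suppose $\X$ lies over $\mathbb{F}_q$. Then $\X(\bc)=\emptyset$, so $\X_{\infty}$ is empty and both $R\Gamma_W(\X_\infty,\bz(n))$ and $R\Gamma_W(\X_\infty,\bz(d-n))$ vanish, while the archimedean Euler factors $\zeta^*(\X_\infty,n)$ and $\zeta^*(\X_\infty,d-n)$ are equal to $1$. Hence $\Xi(\X/\bs,n)$ reduces to the alternating tensor product of $\mathrm{det}_{\bz}R\Gamma(\X,L\Omega_{\X/\bs}^{<?})$ for $?=n,d-n$, and the trivialization $\xi_{\X/\bs,n}$ is forced by the fact that these complexes are torsion by Corollary \ref{Milne}. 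Its determinant is therefore
$$\pm\prod_i|H^i(\X,L\Omega_{\X/\bs}^{<n})|^{(-1)^{i+1}}\cdot\prod_i|H^i(\X,L\Omega_{\X/\bs}^{<d-n})|^{(-1)^i}=\pm q^{-\chi(\X/\mathbb{F}_q,\mathcal{O}_\X,n)+\chi(\X/\mathbb{F}_q,\mathcal{O}_\X,d-n)},$$
and Milne's \cite[Lemma 10.1]{Milne86} identifies the right hand side with $\pm A(\X)^{n-d/2}=\pm q^{-(n-d/2)\chi(\X/\mathbb{F}_q)}$, completing the finite field case.

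The main obstacle is bookkeeping: one needs to verify that the fiber sequence of Corollary \ref{corCXnintro} induces the determinantal isomorphism with exactly the sign and normalization implicit in the definition of $\xi_{\X/\bs,n}$, and that under this comparison the correction factor $C_\infty$ appears on precisely the side dictated by \cite[Theorem 1.2]{Flach-Morin20}. Once these compatibilities are checked, the theorem is a direct substitution.
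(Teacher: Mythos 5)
Your proposal is correct and takes essentially the same route as the paper: the paper deduces Theorem \ref{thmintro} exactly by this two-case reduction, citing Corollary \ref{corCXnintro} together with \cite[Theorem 1.2]{Flach-Morin20} when $\X/\bz$ is flat, and Corollary \ref{Milne} together with \cite[Lemma 10.1]{Milne86} when $\X$ lies over a finite field, with no further detail given. Your determinant-line bookkeeping (the $C_{\infty}(\X,n)^{-1}C_{\infty}(\X,d-n)$ comparison in the flat case and the $q^{-\chi(\X,\mathcal{O}_{\X},n)+\chi(\X,\mathcal{O}_{\X},d-n)}=A(\X)^{n-d/2}$ computation over $\mathbb{F}_q$) simply makes explicit what the paper declares to follow immediately.
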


To conclude, we formulate the special value conjecture stated in \cite[Conjecture 1.1]{Flach-Morin20} in light of Corollary \ref{corCXnintro}. Let $\X$ be a regular scheme of pure dimension $d$ proper over $\mathrm{Spec}(\bz)$ and let $n\in \bz$. If $\X$ satisfies Assumptions $\mathbf{L}(\overline{\mathcal{X}}_{et},n)$, $\mathbf{L}(\overline{\mathcal{X}}_{et},d-n)$, $\mathbf{AV}(\overline{\mathcal{X}}_{et},n)$ and $\mathbf{B}(\mathcal{X},n)$ of \cite{Flach-Morin18}, then we may define a perfect complex of abelian groups $R\Gamma_{W,c}(\X,\bz(n))$, consider the fundamental line
\[ \Delta(\X/\bs,n):={\det}_\bz R\Gamma_{W,c}(\mathcal{X},\mathbb{Z}(n))\otimes_\bz {\det}_\bz R\Gamma(\X,L\Omega_{\X/\bs}^{<n})\]
and define a canonical trivialization
\begin{equation*}
\lambda:\br\xrightarrow{\sim}\Delta(\X/\bs,n)\otimes_\bz\br.\label{introtriv}
\end{equation*}
Let $\zeta(\X,s)$ be the Zeta function  of $\X$. Assuming that $\zeta(\X,s)$ has a meromorphic continuation to the entire complex plane, we denote by $\zeta^*(\X,n)\in\br^\times$ its leading Taylor coefficient at $s=n$.

\begin{conj} \label{conjmain} (Joint with Flach \cite{Flach-Morin20})
We have
$$
\lambda(\zeta^*(\mathcal{X},n)^{-1}\cdot\mathbb{Z})= \Delta(\mathcal{X}/\mathbb{S},n).
$$
\end{conj}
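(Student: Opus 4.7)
The plan is to reduce Conjecture \ref{conjmain} to \cite[Conjecture 1.1]{Flach-Morin20}, which asserts the same special-value statement but with the classical fundamental line built from $R\Gamma(\X,L\Omega^{<n}_{\X/\bz})$ together with the explicit archimedean correcting factor $C_\infty(\X,n)$. The bridge between the two formulations is precisely Corollary \ref{corCXnintro}: a fiber sequence comparing $L\Omega^{<n}_{\X/\bs}$ and $L\Omega^{<n}_{\X/\bz}$ whose third term has finite cohomology with alternating product of orders equal to $C_\infty(\X,n)^{-1}$.

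Concretely, I would apply ${\det}_\bz$ to the fiber sequence
\[ R\Gamma(\X, \mathcal{Z}^1L\Omega_{\X/\bs}^{<n})\rightarrow R\Gamma(\X,L\Omega_{\X/\bs}^{<n})\rightarrow R\Gamma(\X,L\Omega_{\X/\bz}^{<n}) \]
of Corollary \ref{corCXnintro} and use the finiteness of the left-hand term to obtain a canonical identification ${\det}_\bz R\Gamma(\X,L\Omega^{<n}_{\X/\bs})\otimes_\bz\br \simeq {\det}_\bz R\Gamma(\X,L\Omega^{<n}_{\X/\bz})\otimes_\bz\br$ under which the two integral structures differ by $C_\infty(\X,n)$. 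Combined with the analogous identity in weight $d-n$, this produces a canonical isomorphism $\Delta(\X/\bs,n)\otimes\br \simeq \Delta(\X/\bz,n)\otimes\br$ sending the lattice on the left to $C_\infty(\X,n)^{\pm 1}\cdot\Delta(\X/\bz,n)$. Since $L\Omega^{<n}_{\X/\bs}\otimes_\bz\bq$ coincides with $L\Omega^{<n}_{\X_\bq/\bq}$ (as recorded just after Definition \ref{defn/S}), and since the trivializations $\lambda$ and $\lambda_\infty$ of \cite{Flach-Morin20} are both constructed from Deligne-cohomology duality applied to this common rationalization, the two trivializations agree under the identification above. Conjecture \ref{conjmain} therefore becomes precisely \cite[Conjecture 1.1]{Flach-Morin20}.

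Once this equivalence of conjectures is established, Conjecture \ref{conjmain} follows unconditionally in the regimes where the latter is known: for $\X$ smooth over a number ring with $n\leq 1$ (via the conjectures of Bloch-Kato and Fontaine-Perrin-Riou, as in \cite[Theorem 5.27]{Flach-Morin18}), and for $\X$ smooth proper over a finite field with arbitrary $n$, the finite-field case being made explicit by Corollary \ref{Milne} together with Milne's theorem \cite{Milne86}.

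The main obstacle is entirely outside the scope of the present paper: it is \cite[Conjecture 1.1]{Flach-Morin20} itself, equivalently Beilinson's conjectures combined with the Tamagawa number conjecture of Bloch-Kato and Fontaine-Perrin-Riou, both of which remain wide open in general. The contribution of the results proved here is therefore conceptual rather than unconditional---the ad hoc archimedean factor $C_\infty(\X,n)$ is reinterpreted as the order of honest cohomology groups $H^i(\X,\mathcal{Z}^1L\Omega^{<n}_{\X/\bs})$ arising from the filtration on topological Hochschild homology over the sphere spectrum---and producing genuinely new cases of Conjecture \ref{conjmain} would require new input into the underlying Tamagawa-type conjecture.
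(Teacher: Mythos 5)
You should first be clear that the statement you were asked about is a \emph{conjecture}: the paper does not prove it and cannot, since (as the surrounding discussion makes explicit) it is equivalent in various regimes to the Bloch--Kato/Fontaine--Perrin-Riou conjecture, to Birch--Swinnerton-Dyer, or to finite generation of Weil-\'etale motivic cohomology. Your overall plan --- that Conjecture \ref{conjmain} is a reformulation of \cite[Conjecture 1.1]{Flach-Morin20}, with Corollary \ref{corCXnintro} serving as the bridge by which the ad hoc factor $C_{\infty}(\X,n)$ is absorbed into ${\det}_\bz R\Gamma(\X,L\Omega^{<n}_{\X/\bs})$ --- is exactly the relationship the paper intends (it says it ``formulates the special value conjecture stated in \cite[Conjecture 1.1]{Flach-Morin20} in light of Corollary \ref{corCXnintro}''), and the determinant argument you sketch (applying ${\det}_\bz$ to the fiber sequence, using finiteness of $R\Gamma(\X,\mathcal{Z}^1L\Omega^{<n}_{\X/\bs})$ and the rational comparison $L\Omega^{<n}_{\X/\bs}\otimes\bq\simeq L\Omega^{<n}_{\X_{\bq}/\bq}$) is the right mechanism. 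One small slip: the fundamental line $\Delta(\X/\bs,n)$ involves only the weight $n$ term, so your appeal to ``the analogous identity in weight $d-n$'' is not needed for the equivalence of the two formulations; the weight $d-n$ term only enters $\Xi(\X/\bs,n)$ and Theorem \ref{thmintro} (the functional-equation statement), which is a separate result.

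The genuine problem is your final paragraph's claim that the conjecture then ``follows unconditionally'' in certain regimes. It does not. For $\X$ smooth over a number ring with $n\leq 1$ the paper states only an \emph{equivalence} with the Bloch--Kato conjecture, which is open; and over a finite field the paper says Conjecture \ref{conjmain} follows from the (conjectured) finite generation of $H^i_W(\X,\bz(n))$, i.e.\ ultimately from the Tate conjecture --- Corollary \ref{Milne} together with \cite{Milne86} handles only the correcting-factor side, not the Weil-\'etale side. So no unconditional cases come out of your reduction beyond what was already known conditionally, and your write-up should say ``conditionally, in the cases where the relevant standard conjectures are known or assumed'' rather than ``unconditionally.'' With that correction, your text is an accurate account of why the statement is a conjecture and how the results of this paper feed into it, rather than a proof of it.
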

It follows from Theorem \ref{thmintro} that Conjecture \ref{conjmain} is compatible with the functional equation in the form of \cite[Conjecture 1.3]{Flach-Morin20}. If $\X$ lies over a finite field, then Conjecture \ref{conjmain}  follows from the (conjectured) finite generation of the Weil-étale motivic cohomology groups $H^{i}_W(\X,\bz(n))$, which in turn follows from the Tate conjecture. If $n=1$, $d=2$ and $\X/\bz$ is flat, then Conjecture \ref{conjmain} is equivalent to the Birch and Swinnerton-Dyer conjecture by \cite[Section 5]{Flach-Siebel19}.  Finally, if $\X/\mathcal{O}_F$ is smooth over a number ring and $n\leq 1$, then  Conjecture \ref{conjmain} is equivalent to the Bloch-Kato conjecture \cite{Bloch-Kato88}  for the motive $h(\X_{\bq})(n)$ in the formulation of Fontaine and Perrin-Riou \cite{Fontaine-Perrin-Riou-91}. Therefore, if $\X/\mathcal{O}_F$ is smooth over a number ring and $n\geq d-1$, then  Conjecture \ref{conjmain} follows from the functional equation together with the Bloch-Kato conjecture. Finally, the relationship between Conjecture \ref{conjmain} and Deninger's conjectures (see e.g. \cite{Deninger94}) is explained in \cite[Section 5.2]{Flach-Morin20a}.\\

\hspace{-0.5cm}{\bf{Aknowelements.}} I am grateful to  Spencer Bloch, Christopher Deninger, Lars Hesselholt, Steve Lichtenbaum, Matthew Morrow and Niranjan Ramachandran for interesting comments about this paper. Special thanks are due to Matthias Flach for many discussions related to the correction factor.

\section{Notations}

We use the theory of $\infty$-categories developed in \cite{LurieHTT} and \cite{LurieHA}, and try to give precise references to these books whenever it feels useful. 

\subsection{Filtrations}

We refer to \cite{Gwilliam-Pavlov18} and \cite[Section 5.1]{BMS} for the following statements. For any $\mathbb{E}_{\infty}$-ring $R$ we denote by $D(R)$ the derived $\infty$-category of $R$ and by $DF(R):=\mathrm{Fun}(\bz^{\mathrm{op}},D(R))$ the filtered derived $\infty$-category of $R$, which is a symmetric monoidal presentable stable $\infty$-category via the Day convolution. Note that, if $R=\bs$ is the sphere spectrum, then $D(\bs)=\mathrm{Sp}$ is the $\infty$-category of spectra. A (decreasing $\bz^{\mathrm{op}}$-indexed) filtration $X^*$ in $D(R)$ is an object $X^*\in DF(R)$. A filtration $X^*$ in $D(R)$ is said to be $\bn^{\mathrm{op}}$-indexed if
$$\mathrm{gr}^n(X^*):=\mathrm{Cofib}(X^{n+1}\rightarrow X^n)\simeq 0$$
for any $n<0$.  A filtration $X^*$ is  said to be complete if $\mathrm{lim}\, X^*\simeq 0$. We denote by $\widehat{DF}(R)$ the full subcategory of $D(R)$ spanned by the complete filtrations. A filtration $X^*$ on $X\in D(R)$ is a filtration $X^*$ together with a map $\mathrm{colim}\, X^*\rightarrow X$. A filtration $X^*$ on $X\in D(R)$ is said to be exhaustive if the map $\mathrm{colim}\, X^*\rightarrow X$ is an equivalence. A filtration $X^*\in DF(R)$ is said to be multiplicative if it is equipped with the structure of an $\mathbb{E}_{\infty}$-algebra object in $DF(R)$. The inclusion 
$\widehat{DF}(R)\rightarrow DF(R)$
has a left adjoint, called the completion functor, which we denote by $X^*\mapsto \widehat{X^*}$. Moreover $\widehat{DF}(R)$ has  a symmetric monoidal structure such that the completion functor is symmetric monoidal. Explicitly, we have
$$X^*\widehat{\otimes} Y^*=\widehat{X^*\otimes Y^*}.$$
In the above definitions, we may replace $D(R)$ by any symmetric monoidal presentable stable $\infty$-category. A bifiltration $X^{**}$ in $D(R)$ is a filtration in $DF(R)$. A bifiltration $X^{**}$ is said to be bicomplete if it is a complete filtration in $\widehat{DF}(R)$. A (resp. biexhaustive) bifiltration $X^{**}$ on $X\in D(R)$ is a (resp. exhaustive) filtration on a (resp. exhaustive) filtration on $X$. The $\infty$-category of bifiltrations $DBF(R)$ in  $D(R)$ is equivalent to the presentable stable $\infty$-category $\mathrm{Fun}(\bz^{\mathrm{op}}\times \bz^{\mathrm{op}},D(R))$, which is also symmetric monoidal via Day convolution. A bifiltration $X^{**}$ is said to be multiplicative if it is equipped with the structure of an $\mathbb{E}_{\infty}$-algebra object in $DBF(R)$, i.e. if $X^{**}$ is a multiplicative filtration in $DF(R)$. We also denote by $\widehat{DBF}(R)$ the $\infty$-category of bicomplete bifiltrations.

\subsection{$p$-adic completion}

We refer to \cite[Section 7.3]{LurieSAG} for the following statements. If $R$ is an $\mathbb{E}_{\infty}$-ring and $p$ is a prime number, we may consider the full subcategory $D(R)^{\mathrm{Cpl}(p)}$ of $D(R)$ consisting of $p$-complete $R$-modules. Here $(p)$ is the ideal generated by $p$ in the ring $\pi_0(R)$.
The inclusion $D(R)^{\mathrm{Cpl}(p)}\rightarrow D(R)$ has a left adjoint, called the $p$-completion functor, which we denote by
$$\appl{D(R)}{D(R)^{\mathrm{Cpl}(p)}}{X}{X^{\wedge}_p}$$
Note that if $R$ is a (discrete) commutative ring and $X\in D(R)$ then $$X^{\wedge}_p\simeq R\mathrm{lim}_{\nu}\,(X\otimes_{\bz}^L{\bz}/p^{\nu}\bz).$$ 
Then $D(R)^{\mathrm{Cpl}(p)}$ has a symmetric monoidal structure $\widehat{\otimes}$ such that the $p$-completion functor is symmetric monoidal \cite[Variant 7.3.5.6]{LurieSAG}.   Explicitly, for $X, Y\in D(R)^{\mathrm{Cpl}(p)}$, we have
$$X\widehat{\otimes} Y\simeq (X\otimes Y)^{\wedge}_p$$
and the unit object for $\widehat{\otimes}$ is $R^{\wedge}_p$.  The inclusion $D(R)^{\mathrm{Cpl}(p)}\rightarrow D(R)$, being right adjoint to a symmetric monoidal functor, is lax symmetric monoidal. In particular, if $X$ is an $\mathbb{E}_{\infty}$-$R$-algebra, then so is $X^{\wedge}_p$, where we also denote by $X^{\wedge}_p$ its image in $D(R)$. Finally we denote by $X^{\wedge}=\prod_p X^{\wedge}_p$ the profinite completion of $X\in D(R)$, where the product is taken over all prime numbers.

\subsection{Topological Hochschild homology}

If  $\mathcal{D}^{\otimes}$ is a  symmetric monoidal $\infty$-category, then we denote by $\mathrm{CAlg}(\mathcal{D})$  the $\infty$-category of $\mathbb{E}_{\infty}$-algebras in $\mathcal{D}$. Let $R$ be an $\mathbb{E}_{\infty}$-ring spectrum, and let $A$ be an $\mathbb{E}_{\infty}$-$R$-algebra, i.e. $A\in \mathrm{CAlg}(D(R))$. One considers the constant functor 
$$T_{A/R}:\bt\rightarrow \mathrm{CAlg}(D(R))$$
with value $A$, and one defines
$$THH(A/R):=A^{\otimes_R\bt}:=\mathrm{colim} \,T_{A/R}.$$
Then $THH(A/R)$ is a $\bt$-equivariant $\mathbb{E}_{\infty}$-$R$-algebra, there is a canonical (non-equivariant) map $A\rightarrow THH(A/R)$ of $\mathbb{E}_{\infty}$-$R$-algebra, and $THH(A/R)$ is universal for these properties. 
We have
\begin{equation}\label{computeTHH1}
THH(A/R)\otimes^L_RS\simeq THH(A\otimes^L_RS/S)
\end{equation}
where $\otimes^L_R$ denotes the coproduct in $\mathrm{CAlg}(D(R))$, which we sometimes simply denote by $\otimes_R$. The equivalence (\ref{computeTHH1}) follows from the fact that the left hand side satisfies the universal property of the right hand side.

If $R=\bs$ is the sphere spectrum, we set $THH(A):=THH(A/\bs)$. If both $A$ and $R$ are discrete, so that they can be identified with classical commutative rings, then we set
$$HH(A/R):=THH(A/R)$$
and we have
\begin{equation}\label{computeTHH2}
THH(A)\otimes_{THH(R)}R\simeq HH(A/R).
\end{equation}
If $A$ is discrete and $R=\bz$, then we set $HH(A):=HH(A/\bz)$.

\subsection{Quasisyntomic rings} We recall some definitions introduced in \cite[Section 4]{BMS}. Let $A$ be a commutative ring and let $K\in D(A)$. Recall that $K$ is said to have Tor-amplitude in $[a,b]$ if $K\otimes^L_AM$ is cohomologically concentrated in degrees in $[a,b]$ for any $A$-module $M$, and that $K$ is said to have $p$-complete Tor-amplitude in $[a,b]$ if $K\otimes^L_AA/pA\in D(A/pA)$ has Tor-amplitude in $[a,b]$. We say that $K\in D(A)$ is $p$-completely flat if $K$ has $p$-complete Tor-amplitude in $[0,0]$. Recall also that a ring $A$ is said to have bounded $p^{\infty}$-torsion if the inductive system $A[p^{\nu}]:=\mathrm{Ker}(A\stackrel{p^{\nu}}{\rightarrow} A)$, indexed by $\nu\geq 1$, is eventually constant. The ring $A$ is said to be $p$-completely quasi-syntomic if $A$ is $p$-complete, has bounded $p^{\infty}$-torsion, and $L_{A/\bz_p}\in D(A)$ has $p$-complete amplitude in $[-1,0]$.  A commutative ring $S$ is quasiregular semiperfectoid if $S$ is $p$-completely quasi-syntomic and if there exists a surjective morphism $S'\rightarrow S$ where $S'$ is perfectoid.

We denote by $\mathrm{QSyn}_{\bz_p}$ the category of $p$-completely quasisyntomic rings, and by $\mathrm{QRSPerfd}_{\bz_p}$ the category of $p$-complete quasiregular semiperfectoid rings $S$. If $R\in \mathrm{QSyn}_{\bz_p}$, then $\mathrm{QSyn}_{\bz_p}:=(\mathrm{QSyn}_{\bz_p})_{R/}$ denotes the category of $p$-completely quasisyntomic rings $A$ endowed with a map $R\rightarrow A$, and we define similarly $\mathrm{QRSPerfd}_{R}$.  Finally we denote by $\mathrm{qSyn}_{R}$ (respectively $\mathrm{qrsPerfd}_{R}$) the full subcategory of $\mathrm{QSyn}_{R}$ (respectively $\mathrm{QRSPerfd}_{R}$) spanned by the rings $A$ such that the structure map $R\rightarrow A$ is quasisyntomic \cite[Definition 4.10]{BMS}.

We say that a commutative ring $A$ is quasi-lci if $L_{A/\bz}$ has Tor-amplitude in $[-1,0]$. We say that $A$ has bounded torsion if $A$ has bounded $p^{\infty}$-torsion for any prime $p$.

\begin{lem}
Let $A$ be a quasi-lci ring with bounded torsion. Then for any prime $p$, $A^{\wedge}_p$ is discrete and $p$-completely quasi-syntomic.
\end{lem}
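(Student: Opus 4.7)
The plan is to verify the three defining conditions of being $p$-completely quasi-syntomic separately, namely: (a) $A^{\wedge}_p$ is $p$-complete and discrete; (b) $A^{\wedge}_p$ has bounded $p^{\infty}$-torsion; (c) $L_{A^{\wedge}_p/\bz_p}$ has $p$-complete Tor-amplitude in $[-1,0]$.

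For (a) and (b), I would start from the bounded $p^{\infty}$-torsion hypothesis on $A$. If $A[p^N]=A[p^{N+1}]=\cdots$ for some $N$, then each transition map in the tower $(A/p^{\nu})_{\nu}$ has kernel uniformly bounded, so the $\mathrm{lim}^{1}$ term vanishes. Consequently the derived $p$-completion $R\mathrm{lim}_{\nu}(A\otimes^L_{\bz}\bz/p^{\nu})$ agrees with the classical $p$-completion and is discrete; this is the standard argument one finds in \cite[Lemma 4.6]{BMS} style statements. The same uniform bound transfers to $A^{\wedge}_p$, so $A^{\wedge}_p[p^{\infty}]$ is also bounded.

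For (c), the key identity is that the canonical map $A\to A^{\wedge}_p$ induces isomorphisms $A/p^{\nu}\xrightarrow{\sim}A^{\wedge}_p/p^{\nu}$ for every $\nu$ (thanks to the discreteness of $A^{\wedge}_p$ just established), together with the vanishing $L_{\bz_p/\bz}\widehat{\otimes}_{\bz}\bz_p\simeq 0$ after $p$-completion. I would combine these inside the two transitivity triangles for $\bz\to\bz_p\to A^{\wedge}_p$ and $\bz\to A\to A^{\wedge}_p$. The first triangle, after derived $p$-completion, gives
\[
L_{A^{\wedge}_p/\bz}^{\wedge_p}\simeq L_{A^{\wedge}_p/\bz_p}^{\wedge_p},
\]
because the $p$-completed cotangent complex of $\bz_p/\bz$ is zero. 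The second triangle, combined with the $p$-completed vanishing $L_{A^{\wedge}_p/A}^{\wedge_p}\simeq 0$ (which follows from $A/p^{\nu}\cong A^{\wedge}_p/p^{\nu}$ by a standard Stacks/BMS argument: the cotangent complex is computed modulo $p^{\nu}$, where the map is already an isomorphism), yields
\[
L_{A^{\wedge}_p/\bz}^{\wedge_p}\simeq (L_{A/\bz}\otimes^L_{A}A^{\wedge}_p)^{\wedge_p}.
\]
Since $L_{A/\bz}$ has Tor-amplitude in $[-1,0]$ over $A$, the base change $L_{A/\bz}\otimes^L_{A}A^{\wedge}_p$ has Tor-amplitude in $[-1,0]$ over $A^{\wedge}_p$, and $p$-completion preserves this in the $p$-complete sense. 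Combining the two displayed equivalences proves (c).

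The main subtle point is the vanishing $L_{A^{\wedge}_p/A}^{\wedge_p}\simeq 0$. This is not formal: it relies crucially on $A$ having bounded $p^{\infty}$-torsion, which is what makes $A\to A^{\wedge}_p$ an isomorphism modulo every power of $p$ (rather than merely after a further $p$-completion). Once this is in hand, everything else assembles from transitivity and base change for the cotangent complex in a routine way.
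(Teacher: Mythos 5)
Your argument is correct, and parts (a)--(b) coincide with the paper's proof (bounded $p^{\infty}$-torsion makes the derived and classical $p$-completions agree, so $A^{\wedge}_p$ is discrete, and the torsion bound transfers). For (c) you take a slightly different route: you split the problem along the two transitivity triangles for $\bz\to\bz_p\to A^{\wedge}_p$ and $\bz\to A\to A^{\wedge}_p$, using the $p$-completed vanishings $(L_{\bz_p/\bz})^{\wedge}_p\simeq 0$ and $(L_{A^{\wedge}_p/A})^{\wedge}_p\simeq 0$ to identify $(L_{A^{\wedge}_p/\bz_p})^{\wedge}_p$ with $(L_{A/\bz}\otimes^L_A A^{\wedge}_p)^{\wedge}_p$, whereas the paper works directly with derived mod-$p$ reductions: from the equivalence $A\otimes^L_{\bz}\bz/p\simeq A^{\wedge}_p\otimes^L_{\bz}\bz/p$ of (animated) rings and base change for the cotangent complex it gets $L_{A/\bz}\otimes^L_A A/pA\simeq L_{A^{\wedge}_p/\bz_p}\otimes^L_{A^{\wedge}_p}A^{\wedge}_p/pA^{\wedge}_p$ and concludes by stability of Tor-amplitude. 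The two arguments rest on the same input (the derived completion map is an equivalence after $-\otimes^L_{\bz}\bz/p^{\nu}$), so the difference is packaging: your version is modular and isolates the two vanishing statements, the paper's is shorter and never mentions $L_{\bz_p/\bz}$. One point to tighten: the vanishing $(L_{A^{\wedge}_p/A})^{\wedge}_p\simeq 0$ does not follow formally from the \emph{classical} isomorphisms $A/p^{\nu}\cong A^{\wedge}_p/p^{\nu}$ alone (base change for the cotangent complex needs the \emph{derived} reduction $A\otimes^L_{\bz}\bz/p\to A^{\wedge}_p\otimes^L_{\bz}\bz/p$ to be an equivalence; classical agreement mod all $p^{\nu}$ is weaker in general, e.g. when extra $p$-divisible torsion is present). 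Here this is harmless, since $A^{\wedge}_p$ is precisely the derived $p$-completion of $A$, so the derived reductions agree by definition of completion -- state it that way and your step is airtight.
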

\begin{proof} Let $p$ be a prime number. Since $A$ has bounded $p^{\infty}$-torsion, the projective systems $\{A\otimes^L_{\bz} \bz/p^{\nu}\bz\}$ and $\{A/p^{\nu}\}$ are pro-isomorphic. Moreover $\{A/p^{\nu}\}$ is Mittag-Leffler, hence  $$A^{\wedge}_p= R\mathrm{lim} \, A\otimes^L_{\bz} \bz/p^{\nu}\bz\simeq \mathrm{lim}\,A/p^{\nu}$$
is discrete and $p$-complete. For any $\nu\geq1$, the canonical map
\begin{equation}\label{forcotangentmodulo}
A\otimes^L_{\bz} \bz/p^{\nu}\bz\stackrel{\sim}{\rightarrow} A^{\wedge}_p\otimes^L_{\bz} \bz/p^{\nu}\bz
\end{equation}
is an equivalence since $A^{\wedge}_p$  is the derived $p$-completion of $A$. This gives

$$A^{\wedge}_p[p^{\nu}]\simeq A[p^{\nu}]
\textrm{ and }
A/pA\simeq A^{\wedge}_p/pA^{\wedge}_p.$$
In particular, $A^{\wedge}_p$ has bounded $p$-torsion. Moreover, (\ref{forcotangentmodulo}) gives
$$L_{A/\bz}\otimes^L _{\bz}\bz/p\bz\simeq L_{A^{\wedge}_p/\bz_p}\otimes^L _{\bz}\bz/p\bz.$$
After derived base change along $A\otimes^L_{\bz}\bz/p\bz\rightarrow A/p$, we obtain
$$L_{A/\bz}\otimes^L_A A/pA\simeq L_{A^{\wedge}_p/\bz_p}\otimes^L_A A/pA .$$
Similarly, (\ref{forcotangentmodulo}) and derived base change give 
$$A/pA \simeq A \otimes^L_A A/pA \stackrel{\sim}{\rightarrow} A^{\wedge}_p \otimes^L_A A/pA$$
hence we have
$$L_{A/\bz}\otimes^L_A A/pA\simeq L_{A^{\wedge}_p/\bz_p}\otimes^L_{A^{\wedge}_p} A^{\wedge}_p \otimes^L_A A/pA\simeq L_{A^{\wedge}_p/\bz_p}\otimes^L_{A^{\wedge}_p} A^{\wedge}_p/pA^{\wedge}_p.$$
But $L_{A/\bz}\otimes^L_{A} A/pA \in D(A/pA)$ has Tor-amplitude in $[-1,0]$ by stability of Tor-amplitude by derived base change. Hence $L_{A^{\wedge}_p/\bz_p}\in D(A^{\wedge}_p)$ has $p$-complete amplitude in $[-1,0]$.

\end{proof}

\subsection{The BMS filtrations}\label{sect-BMS}

We consider the sites given by the categories $\mathrm{QSyn}^{\mathrm{op}}_{\bz_p}$  and $\mathrm{QRSPerfd}_{\bz_p}^{\mathrm{op}}$ endowed with the quasisyntomic topology, see \cite[Lemma 4.17 and Lemma 4.27]{BMS}. 
Let $\mathcal{C}$ be a presentable category. Then the restriction along $\mathrm{QRSPerfd}_{\bz_p}^{\mathrm{op}}\rightarrow \mathrm{QSyn}^{\mathrm{op}}_{\bz_p}$ induces an equivalence \cite[Proposition 4.31]{BMS} 
$$\mathrm{Sh}_{\mathcal{C}}(\mathrm{QSyn}^{\mathrm{op}}_{\bz_p})\longrightarrow \mathrm{Sh}_{\mathcal{C}}(\mathrm{QRSPerfd}^{\mathrm{op}}_{\bz_p})$$
between the corresponding $\infty$-categories of $\mathcal{C}$-valued sheaves. We denote an inverse to this functor by 
\begin{equation}\label{unfolding}
\appl{\mathrm{Sh}_{\mathcal{C}}(\mathrm{QRSPerfd}^{\mathrm{op}}_{\bz_p})}{\mathrm{Sh}_{\mathcal{C}}(\mathrm{QSyn}^{\mathrm{op}}_{\bz_p})}{\mathcal{F}}{R\Gamma_{\mathrm{syn}}(-,\mathcal{F})},
\end{equation}
unlike in \cite{BMS}, where it is denoted by $\mathcal{F}\rightarrow \mathcal{F}^{\beth}$. Consider the $\widehat{DF}(\bz_p)$-valued presheaves $\tau_{\geq 2*} HH(-/\bz_p,\bz_p)$, $\tau_{\geq 2*} HP(-/\bz_p,\bz_p)$ and $\tau_{\geq 2*} HC^-(-/\bz_p,\bz_p)$ on $\mathrm{QRSPerfd}_{\bz_p}^{\mathrm{op}}$.  For $?=H,P,C^-$,  the presheaf $\tau_{\geq 2*} H?(-/\bz_p,\bz_p)$ is a  sheaf for the quasisyntomic topology \cite[Section 5.2]{BMS}, and
one defines 
$$\mathrm{Fil}^*_{BMS}H?(-/\bz_p,\bz_p):=R\Gamma_{\mathrm{syn}}(-,\tau_{\geq 2*} H?(-/\bz_p,\bz_p))$$
which is a  $\widehat{DF}(\bz_p)$-valued sheaf on $\mathrm{QSyn}_{\bz_p}^{\mathrm{op}}$. 
Note that
$$\mathrm{Fil}^*_{BMS}H?(S/\bz_p,\bz_p)\simeq \tau_{\geq 2*} H?(S/\bz_p,\bz_p) $$
for any $S\in \mathrm{QRSPerfd}_{\bz_p}$. Since our base ring is $R=\bz_p$, the proof of \cite[Theorem 1.17]{BMS} is valid for any $A\in \mathrm{QSyn}_{\bz_p}$. In particular, for any $A\in \mathrm{QSyn}_{\bz_p}$, the $n$-graded piece 
$\mathrm{gr}^n_{BMS}H?(A/\bz_p,\bz_p)$ is identified with $(L\Lambda^nL_{A/\bz_p})^{\wedge}_p[n]$, $(L\widehat{\Omega}_{A/\bz_p})^{\wedge}_p[2n]$ and  $(L\widehat{\Omega}^{\geq n}_{A/\bz_p})^{\wedge}_p[2n]$ for $?=H,P,C^-$ respectively. 

In the previous paragraph, one may replace the base ring $\bz_p$ by an arbitrary $R\in \mathrm{QSyn}_{\bz_p}$, provided we also replace  $\mathrm{QSyn}_{R}$ and $\mathrm{QRSPerfd}_{R}$ by $\mathrm{qSyn}_{R}$ and $\mathrm{qrsPerfd}_{R}$ respectively.

Similarly, consider the  $\widehat{DF}(\bs)$-valued presheaves $\tau_{\geq 2*} THH(-,\bz_p)$, $\tau_{\geq 2*} TP(-,\bz_p)$ and $\tau_{\geq 2*} TC^-(-,\bz_p)$ on $\mathrm{QRSPerfd}_{\bz_p}^{\mathrm{op}}$. For $?=HH,P,C^-$, the presheaf $\tau_{\geq 2*} T?(-/\bz_p,\bz_p)$ is a  sheaf for the quasisyntomic topology \cite[Section 7]{BMS}, and
one defines 
$$\mathrm{Fil}^*_{BMS}T?(-,\bz_p):=R\Gamma_{\mathrm{syn}}(-,\tau_{\geq 2*} T?(-,\bz_p)),$$
which is a $\widehat{DF}(\bs)$-valued sheaf on $\mathrm{QSyn}_{\bz_p}^{\mathrm{op}}$. One has
$$\mathrm{Fil}^*_{BMS}T?(S,\bz_p)\simeq \tau_{\geq 2*} T?(S,\bz_p) $$
for any $S\in \mathrm{QRSPerfd}_{\bz_p}$.

\section{The motivic filtration}

\subsection{Cartesian squares}

We define a filtration on $THH$ as follows. For any commutative ring $A$ we define a $\bt$-equivariant filtration
$$\mathcal{Z}^*THH(A):= THH(A)\otimes_{THH(\bz)} \tau_{\geq 2*-1}THH(\bz)$$
on $THH(A)$. Recall from \cite{Bökstedt85} and \cite{Lindenstrauss-Madsen00} that $\pi_{i}THH(\bz)$ is $\bz$ for $i=0$ and $\bz/n\bz$ for $i=2n-1>0$ and $0$ else. This yields $\bt$-equivariant equivalences of spectra
$$\mathrm{gr}_{\mathcal{Z}}^0 (THH(A))\simeq THH(A)\otimes_{THH(\bz)} \bz\simeq HH(A),$$
and
\begin{eqnarray*}
\mathrm{gr}_{\mathcal{Z}}^n (THH(A))&\simeq& THH(A)\otimes_{THH(\bz)} \bz/n[2n-1]\\
&\simeq& THH(A)\otimes_{THH(\bz)}\bz\otimes_{\bz} \bz/n[2n-1] \\
&\simeq & HH(A)\otimes^L_{\bz} \bz/n[2n-1]
\end{eqnarray*}
for $n>0$, and $\mathrm{gr}_{\mathcal{Z}}^n (THH(A))\simeq 0$ for $n<0$. We define similarly 
$$\mathcal{Z}^nTHH(A,\bz_p):= THH(A,\bz_p)\otimes_{THH(\bz)} \tau_{\geq 2n-1}THH(\bz)$$
Using \cite[Lemma 2.5]{BMS}, we also have 
$$\mathrm{gr}_{\mathcal{Z}}^0 (THH(A,\bz_p))\simeq HH(A,\bz_p)$$
and
$$\mathrm{gr}_{\mathcal{Z}}^n (THH(A,\bz_p))\simeq HH(A,\bz_p)\otimes^L_{\bz} \bz/n[2n-1]$$
for $n>0$.

\begin{prop}\label{prop-filt-Z}
Let $A$ be  a commutative ring. The filtrations ${\mathcal{Z}}^*THH(A)$ and ${\mathcal{Z}}^*THH(A,\bz_p)$, on $THH(A)$ and $THH(A,\bz_p)$ respectively, are both complete and exhaustive.
\end{prop}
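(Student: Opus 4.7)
The plan is to exploit the connectivity structure of the B\"okstedt tower $\tau_{\geq 2*-1}THH(\bz)$; both assertions reduce to t-structure bookkeeping in $D(THH(\bz))$.

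For exhaustiveness, I would first observe that since $THH(\bz)$ is a connective $\mathbb{E}_\infty$-ring, one has $\tau_{\geq 2n-1}THH(\bz)\simeq THH(\bz)$ for every $n\leq 0$. Consequently $\mathcal{Z}^n THH(A)\simeq THH(A)$ for all $n\leq 0$, so the filtration is constant on the non-positive indices and the canonical map $\mathrm{colim}_n\,\mathcal{Z}^n THH(A)\to THH(A)$ is automatically an equivalence. The identical reasoning handles $\mathcal{Z}^n THH(A,\bz_p)$.

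For completeness, the idea is that $\mathcal{Z}^n THH(A)$ becomes uniformly more connective as $n\to\infty$. By definition $\tau_{\geq 2n-1}THH(\bz)$ is $(2n-1)$-connective as a $THH(\bz)$-module, and $THH(A)$ is a connective $THH(\bz)$-module (being the topological Hochschild homology of a discrete commutative ring); the derived $p$-completion $THH(A,\bz_p)$ is at worst $(-1)$-connective, since derived $p$-completion of a connective spectrum is $(-1)$-connective in general. By the standard compatibility of the t-structure with relative tensor products over a connective $\mathbb{E}_\infty$-ring, the tensor product of a $k$-connective module with a $l$-connective module is $(k+l)$-connective. Hence $\mathcal{Z}^n THH(A)$ is $(2n-1)$-connective and $\mathcal{Z}^n THH(A,\bz_p)$ is at worst $(2n-2)$-connective.

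Therefore, for every fixed $i\in\bz$, $\pi_i \mathcal{Z}^n THH(A)=0$ as soon as $n$ is large enough, so the tower $\{\pi_i \mathcal{Z}^n THH(A)\}_n$ is eventually zero; both its inverse limit and $\lim^1$ vanish, and the Milnor short exact sequence gives $\pi_i\,\mathrm{lim}_n\,\mathcal{Z}^n THH(A)=0$ for every $i$, i.e.\ $\mathrm{lim}\,\mathcal{Z}^* THH(A)\simeq 0$. The same argument applies verbatim in the $p$-complete setting. I do not foresee a serious obstacle: the proof is essentially a connectivity-and-limits computation, the only subtle point being that one must not assume $THH(A,\bz_p)$ is connective on the nose, but the shift of at most one in connectivity coming from derived $p$-completion is harmless since $(2n-1)\to\infty$.
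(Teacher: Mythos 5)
Your proof is correct and follows essentially the same route as the paper: exhaustiveness because the filtration is constant (equal to $THH(A)$, resp. $THH(A,\bz_p)$) in non-positive degrees, and completeness because right $t$-exactness of the relative tensor product over the connective ring $THH(\bz)$ (and of $p$-completion) forces the connectivity of $\mathcal{Z}^nTHH(A)$ to tend to infinity, whence the limit vanishes by left completeness of $\mathrm{Sp}$. The only cosmetic difference is that the paper uses that $p$-completion is right $t$-exact, so $THH(A,\bz_p)$ is actually connective rather than just $(-1)$-connective, but your weaker bound suffices for the same conclusion.
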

\begin{proof}
The filtration ${\mathcal{Z}}^*THH(A)$ is $\bn^{\mathrm{op}}$-indexed and we have $$\mathrm{colim}\,{\mathcal{Z}}^*THH(A)\simeq {\mathcal{Z}}^0THH(A)\simeq THH(A).$$
so that ${\mathcal{Z}}^*THH(A)$ is exhaustive. The functor
$$
THH(A)\otimes_{THH(\bz)}(-):D(THH(\bz))\rightarrow \mathrm{Sp}
$$
is right $t$-exact \cite[Definition 1.3.3.1, Corollary 7.2.1.23]{LurieHA} hence we have ${\mathcal{Z}}^nTHH(A)\in \mathrm{Sp}_{\geq 2n-1}$. It follows that the filtration ${\mathcal{Z}}^*THH(A)$ is complete, by left completeness of $\mathrm{Sp}$. The same argument applies to ${\mathcal{Z}}^*THH(A,\bz_p)$ since $THH(A,\bz_p)$ is connective, as the $p$-completion is right $t$-exact too \cite[Proposition 7.3.4.4]{LurieSAG}.

\end{proof}

We refer to \cite[Section 2.2]{BMS} for the Hochschild-Kostant-Rosenberg filtration, which we denote by $\mathrm{Fil}_{HKR}^*$.
\begin{lem}\label{lem-complete-HH}
Let $p$ be a prime number and let $A$ be a commutative ring with bounded $p^{\infty}$-torsion. The map
 $$\left(\mathrm{Fil}_{HKR}^*HH(A)\right)^{\wedge}_p\rightarrow \left(\mathrm{Fil}_{HKR}^*HH(A^{\wedge}_p/\bz_p)\right)^{\wedge}_p$$
is an equivalence of complete multiplicative $\bt$-equivariant filtrations.
\end{lem}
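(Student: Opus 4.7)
The plan is to reduce the statement to a comparison on graded pieces and ultimately to a cotangent complex comparison which is essentially encoded in the preceding lemma. Since the HKR filtration on both sides is $\bn^{\mathrm{op}}$-indexed and complete, and $p$-completion preserves complete filtrations indexed in a bounded-below way, the comparison map is a map of complete $\bn^{\mathrm{op}}$-indexed filtrations. It therefore suffices to show the map is an equivalence on each graded piece. The $n$-th graded piece of $\mathrm{Fil}_{HKR}^{*}HH(B/R)$ is $L\Lambda^{n}_{B}L_{B/R}[n]$, so we are reduced to showing that the natural map
\[(L\Lambda^{n}_{A}L_{A/\bz})^{\wedge}_p \longrightarrow (L\Lambda^{n}_{A^{\wedge}_p}L_{A^{\wedge}_p/\bz_p})^{\wedge}_p\]
is an equivalence of $A^{\wedge}_p$-modules.

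Next I would reduce this further to a comparison of the cotangent complexes themselves. Derived exterior powers commute with base change along $A\to A^{\wedge}_p$, and for any $M\in D(A)$ the $p$-completion $M^{\wedge}_p$ agrees with that of $M\otimes^{L}_{A}A^{\wedge}_p$. This last point uses that $A^{\wedge}_p/A\in D(A)$ has vanishing derived mod-$p$ reduction: the preceding lemma gives $A/p\simeq A^{\wedge}_p/pA^{\wedge}_p$ and $A[p^{\nu}]\simeq A^{\wedge}_p[p^{\nu}]$, and the long exact sequence associated with $A\to A^{\wedge}_p\to A^{\wedge}_p/A$ then forces $\mathrm{Tor}_i(A^{\wedge}_p/A,\bz/p^{\nu})=0$ for $i=0,1$ and every $\nu\geq 1$. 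We are thus reduced to proving the cotangent complex comparison $(L_{A/\bz})^{\wedge}_p\simeq (L_{A^{\wedge}_p/\bz_p})^{\wedge}_p$.

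For this comparison I would combine the two transitivity cofiber sequences associated with the factorizations $\bz\to A\to A^{\wedge}_p$ and $\bz\to \bz_p\to A^{\wedge}_p$ of the structure map $\bz\to A^{\wedge}_p$. Both have middle term $L_{A^{\wedge}_p/\bz}$, the third terms being $L_{A^{\wedge}_p/A}$ and $L_{A^{\wedge}_p/\bz_p}$ respectively. Both $L_{A^{\wedge}_p/A}$ and $L_{\bz_p/\bz}$ have vanishing $p$-completion, since in each case the source map becomes an isomorphism modulo $p^{\nu}$ for every $\nu$ (again by the preceding lemma for $A\to A^{\wedge}_p$, and classically for $\bz\to \bz_p$). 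After $p$-completing the two transitivity sequences, both $(L_{A/\bz}\otimes^{L}_{A}A^{\wedge}_p)^{\wedge}_p$ and $(L_{A^{\wedge}_p/\bz_p})^{\wedge}_p$ are identified with $(L_{A^{\wedge}_p/\bz})^{\wedge}_p$, yielding the desired equivalence.

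The multiplicative, $\bt$-equivariant, and completeness structures are then preserved automatically by the $p$-completion functor, which is lax symmetric monoidal on $\widehat{DF}(\bs[\bt])$ and commutes with the $\bt$-action on $THH$, and the comparison map is constructed as a map of such filtrations from the outset. The only nontrivial obstacle is the cotangent complex step, where the bounded $p^{\infty}$-torsion hypothesis is essential; the rest is formal manipulation of filtrations and completions.
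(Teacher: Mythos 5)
Your proposal is correct in substance, and its first reduction is exactly the paper's: since $(-)^{\wedge}_p$ commutes with limits, both sides are complete filtrations and one passes to graded pieces, i.e.\ to the map $(L\Lambda^{n}L_{A/\bz}[n])^{\wedge}_p\rightarrow (L\Lambda^{n}L_{A^{\wedge}_p/\bz_p}[n])^{\wedge}_p$. Where you genuinely diverge is in proving this equivalence. The paper never separates the exterior power from the reduction mod $p^{\nu}$: it identifies $(L\Lambda^{n}L_{A/\bz})\otimes^L_{\bz}\bz/p^{\nu}$ with $L\Lambda^{n}L_{(A\otimes^L_{\bz}\bz/p^{\nu})/(\bz/p^{\nu})}$ by base change and then invokes the equivalence $A\otimes^L_{\bz}\bz/p^{\nu}\simeq A^{\wedge}_p\otimes^L_{\bz}\bz/p^{\nu}$ from the preceding lemma, so the comparison is a one-line computation. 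You instead peel off $L\Lambda^{n}$ and reduce to the $p$-completed cotangent complex comparison, proved via the two transitivity triangles and the vanishing of $(L_{A^{\wedge}_p/A})^{\wedge}_p$ and of $(L_{\bz_p/\bz}\otimes^L_{\bz_p}A^{\wedge}_p)^{\wedge}_p$. This works and yields a statement of independent interest, but it hides the one delicate point of your route: $L\Lambda^{n}$ does not commute with $p$-completion, so to pass from the cotangent-complex statement back to the graded pieces you must rephrase ``equivalence after $p$-completion'' as ``equivalence after $-\otimes^L_{\bz}\bz/p$'' and use base change of derived exterior powers along $A^{\wedge}_p\rightarrow A^{\wedge}_p\otimes^L_{\bz}\bz/p$ (legitimate here because all modules involved are connective). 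Add that sentence; as written, ``we are thus reduced to the cotangent complex comparison'' skips exactly the step where your argument is more delicate than the paper's.

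Two smaller points of precision. The map $A\rightarrow A^{\wedge}_p$ need not be injective for a ring with bounded $p^{\infty}$-torsion (take $A=\bz\times\bq$), so the object $A^{\wedge}_p/A$ and its long exact sequence should be replaced by the cofiber of $A\rightarrow A^{\wedge}_p$ in $D(\bz)$; better still, simply quote the equivalence $A\otimes^L_{\bz}\bz/p^{\nu}\simeq A^{\wedge}_p\otimes^L_{\bz}\bz/p^{\nu}$, which the preceding lemma establishes directly (it expresses that $A^{\wedge}_p$ is the derived $p$-completion of $A$), both for your claim that $M^{\wedge}_p\simeq (M\otimes^L_AA^{\wedge}_p)^{\wedge}_p$ and for the mod-$p^{\nu}$ vanishing of $L_{A^{\wedge}_p/A}$. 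Finally, the first terms of your transitivity triangles are $L_{A/\bz}\otimes^L_AA^{\wedge}_p$ and $L_{\bz_p/\bz}\otimes^L_{\bz_p}A^{\wedge}_p$, not the uncompleted complexes themselves; this is harmless given your observation that $p$-completion is insensitive to the base change along $A\rightarrow A^{\wedge}_p$, but it is worth saying, since the lemma concerns the specific functoriality map and your identification does factor it through $L_{A^{\wedge}_p/\bz}$ as required.
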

\begin{proof} 
Since $A$ has bounded $p^{\infty}$-torsion, its derived $p$-adic completion $A^{\wedge}_p:=R\mathrm{lim}(A\otimes^L_{\bz}{\bz}/p^{\bullet})$ coincides with its naive $p$-adic completion. In particular, $A^{\wedge}_p$ is a (discrete) commutative ring. The map of the lemma is induced by the morphism of filtrations 
 $$\mathrm{Fil}_{HKR}^*HH(A)\rightarrow \mathrm{Fil}_{HKR}^*HH(A^{\wedge}_p/\bz_p)$$
The $p$-adic completion functor $(-)^{\wedge}_p$ commutes with small limits, hence
$$\mathrm{lim}\,\left(\mathrm{Fil}_{HKR}^*HH(A)\right)^{\wedge}_p\simeq \left(\mathrm{lim}\,\mathrm{Fil}_{HKR}^*HH(A)\right)^{\wedge}_p\simeq 0$$
since the $HKR$-filtration is complete. Similarly we have  $$\mathrm{lim}\,\left(\mathrm{Fil}_{HKR}^*HH(A^{\wedge}_p/\bz_p)\right)^{\wedge}_p\simeq 0.$$ 
Hence the map of the Lemma is a map of complete filtrations, so that it is enough to show that the induced map
$$\left(\mathrm{gr}_{HKR}^{i}HH(A)\right)^{\wedge}_p\rightarrow \left(\mathrm{gr}_{HKR}^{i}HH(A^{\wedge}_p/\bz_p)\right)^{\wedge}_p$$
is an equivalence for all $i\geq 0$. But this map may be identified with the natural map
$$\left(L\Lambda^{i}L_{A/\bz}[i]\right)^{\wedge}_p\rightarrow \left(L\Lambda^{i}L_{A^{\wedge}_p/\bz_p}[i]\right)^{\wedge}_p$$
which is an equivalence since we have
\begin{eqnarray*}
\left(L\Lambda^{i}L_{A/\bz}\right)\otimes^L_{\bz}{\bz}/p^{\bullet}&\simeq& L\Lambda^{i}L_{(A\otimes_\bz^L\bz/p^{\bullet})/(\bz/p^{\bullet})}\\
& \simeq& L\Lambda^{i}L_{(A^{\wedge}_p\otimes_{\bz}^L\bz/p^{\bullet})/(\bz/p^{\bullet})}\\
&\simeq &\left(L\Lambda^{i}L_{A^{\wedge}_p/\bz_p}\right)\otimes^L_{\bz}{\bz}/p^{\bullet}.
\end{eqnarray*}
as the map $A\otimes_\bz^L\bz/p^{\nu}\rightarrow A^{\wedge}_p\otimes_\bz^L\bz/p^{\nu}$ is an equivalence in the $\infty$-category of simplicial commutative rings.
\end{proof}

\begin{lem}\label{lem-complete-THH}
Let $p$ be a prime number and let $A$ be a commutative ring with bounded $p^{\infty}$-torsion. Then the map
 \begin{equation}\label{maphere}
 THH(A,\bz_p)\rightarrow THH(A^{\wedge}_p,\bz_p)
 \end{equation}
is an equivalence of $\bt$-equivariant $\mathbb{E}_{\infty}$-ring spectra.
\end{lem}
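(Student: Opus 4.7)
The plan is to reduce the claim to an equivalence modulo $p$, and then apply the base-change formula (\ref{computeTHH1}) together with the mod-$p$ invariance already established in the previous lemma. Since (\ref{maphere}) is by construction a map of $\bt$-equivariant $\mathbb{E}_\infty$-ring spectra, and equivalences in this enriched category are detected on underlying spectra, it suffices to show it is an equivalence of spectra. Both sides are $p$-complete and connective, and a map of $p$-complete, bounded-below spectra is an equivalence if and only if it becomes an equivalence after $(-)\otimes_{\bs}\bs/p$ (its cofiber is then $p$-divisible and connective, hence has vanishing $p$-completion). So the problem reduces to showing that $THH(A)\otimes_{\bs}\bs/p \to THH(A^{\wedge}_p)\otimes_{\bs}\bs/p$ is an equivalence.

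Applying (\ref{computeTHH1}) with base change along $\bs\to\bs/p$, one obtains for any commutative ring $B$ a natural, $\bt$-equivariant, multiplicative equivalence
$$THH(B)\otimes_{\bs}\bs/p \;\simeq\; THH\bigl(B\otimes_{\bs}\bs/p\,/\,\bs/p\bigr).$$
Consequently, the mod-$p$ reduction of (\ref{maphere}) is identified with the map on $THH_{\bs/p}$ induced by the $\mathbb{E}_{\infty}$-$\bs/p$-algebra map $A\otimes_{\bs}\bs/p \to A^{\wedge}_p\otimes_{\bs}\bs/p$. For a discrete commutative ring $B$ one has $B\otimes_{\bs}\bs/p \simeq B\otimes^L_{\bz}\bz/p$ (using $\bz/p\simeq \bz\otimes_{\bs}\bs/p$, as $p\in\bz$ is a non-zero-divisor), so the displayed algebra map coincides with $A\otimes^L_{\bz}\bz/p \to A^{\wedge}_p\otimes^L_{\bz}\bz/p$. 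Under the bounded $p^{\infty}$-torsion hypothesis on $A$, this is exactly the equivalence (\ref{forcotangentmodulo}) (with $\nu=1$) established in the proof of the preceding lemma. Hence the displayed map on $THH_{\bs/p}$ is an equivalence, and (\ref{maphere}) is an equivalence of spectra, thus of $\bt$-equivariant $\mathbb{E}_{\infty}$-ring spectra.

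The main subtlety, such as it is, lies in tracking the $\bt$-equivariance and $\mathbb{E}_{\infty}$-multiplicativity through the base-change identification rather than treating it as a bare equivalence of spectra; this is automatic from the construction of $THH$ as a $\bt$-indexed colimit in $\mathrm{CAlg}$ and from the symmetric-monoidality of the base-change functor $(-)\otimes_{\bs}\bs/p$. No genuinely new homotopical content is needed beyond the previous lemma and the base-change formula (\ref{computeTHH1}).
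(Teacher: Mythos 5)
Your opening reductions are fine (checking on underlying spectra, and detecting an equivalence of $p$-complete spectra after smashing with the Moore spectrum $\bs/p$), but the central step is not. The formula (\ref{computeTHH1}) is base change along a map of $\mathbb{E}_{\infty}$-rings $R\rightarrow S$, and there is no such map $\bs\rightarrow\bs/p$: the mod $p$ Moore spectrum is not an $\mathbb{E}_{\infty}$-ring (for $p=2$ it admits no unital multiplication, for $p=3$ no homotopy-associative one, and for $p\geq 5$ it is at best $A_{p-1}$). Consequently the expression $THH\bigl(B\otimes_{\bs}\bs/p\,/\,\bs/p\bigr)$ is not defined, and the claimed identification of $THH(B)\otimes_{\bs}\bs/p$ with a relative $THH$ over $\bs/p$ has no meaning. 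This is not a cosmetic issue: the only input you feed into the argument is the ring-level equivalence $A\otimes^L_{\bz}\bz/p\simeq A^{\wedge}_p\otimes^L_{\bz}\bz/p$, and there is no formal mechanism by which an equivalence of the mod $p$ reductions of two rings yields an equivalence of $THH(-)/p$; topological Hochschild homology is taken over $\bs$, so some device is needed to propagate mod $p$ information from the rings to $THH$.

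The paper supplies exactly such a device: it filters $THH(-,\bz_p)$ by $\mathcal{Z}^*THH(-,\bz_p)=THH(-,\bz_p)\otimes_{THH(\bz)}\tau_{\geq 2*-1}THH(\bz)$, a complete filtration (by connectivity and B\"okstedt's computation of $\pi_*THH(\bz)$), whose graded pieces are $HH(-,\bz_p)$ and $HH(-,\bz_p)\otimes^L_{\bz}\bz/n[2n-1]$; the claim then reduces to Lemma \ref{lem-complete-HH}, i.e.\ to the corresponding statement for $HH$ over $\bz$, where the HKR filtration and the cotangent complex make your mod $p^{\nu}$ input usable. Alternatively, Remark \ref{rem-p-general} gives a one-line argument with no torsion hypothesis: $(-)^{\wedge}_p$ is a localization, hence preserves colimits into $\mathrm{CAlg}(D(\bs)^{\mathrm{Cpl}(p)})$, so $THH(A,\bz_p)\simeq\mathrm{colim}\bigl((-)^{\wedge}_p\circ T_A\bigr)$ depends only on the $p$-complete $\mathbb{E}_{\infty}$-ring $A^{\wedge}_p$, which under the bounded $p^{\infty}$-torsion hypothesis is the discrete classical completion. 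To repair your write-up you should replace the $\bs/p$-base-change step by one of these two arguments (or by an equivalent descent to $THH(\bz)$-modules); as it stands the proof has a genuine gap.
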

\begin{proof}
It is enough to check that (\ref{maphere}) is an equivalence of spectra. The map (\ref{maphere}) induces a map of complete filtrations from
$$\mathcal{Z}^*THH(A,\bz_p):=THH(A,\bz_p)\otimes_{THH(\bz)} \tau_{\geq 2*-1}THH(\bz)$$
to
$$\mathcal{Z}^*THH(A^{\wedge}_p,\bz_p):=THH(A^{\wedge}_p,\bz_p)\otimes_{THH(\bz)} \tau_{\geq 2*-1}THH(\bz).$$
The induced map
$$\mathrm{gr}^0_{\mathcal{Z}}THH(A,\bz_p)\simeq HH(A,\bz_p) \rightarrow HH(A^{\wedge}_p,\bz_p)\simeq \mathrm{gr}^0_{\mathcal{Z}}THH(A^{\wedge}_p,\bz_p)$$
is an equivalence by Lemma \ref{lem-complete-HH}. For $n>0$, the induced map
$$\mathrm{gr}^n_{\mathcal{Z}}THH(A,\bz_p)\rightarrow \mathrm{gr}^n_{\mathcal{Z}}THH(A^{\wedge}_p,\bz_p)$$
identifies with the map from
$$THH(A,\bz_p)\otimes_{THH(\bz)}\pi_{2n-1}THH(\bz)[2n-1]\simeq HH(A,\bz_p)\otimes_{\bz}\bz/n[2n-1]$$
to
$$THH(A^{\wedge}_p,\bz_p)\otimes_{THH(\bz)}\pi_{2n-1}THH(\bz)[2n-1]\simeq HH(A^{\wedge}_p,\bz_p)\otimes_{\bz}\bz/n[2n-1]$$
which is also an equivalence by Lemma \ref{lem-complete-HH}.
\end{proof}

\begin{rem}\label{rem-p-general}
Lemma \ref{lem-complete-THH} holds for any $\mathbb{E}_{\infty}$-ring spectrum $A$.  Indeed, consider the composite functor
$$\bt\stackrel{T_A}{\longrightarrow}\mathrm{CAlg}(D(\bs))\stackrel{(-)^{\wedge}_p}{\longrightarrow}\mathrm{CAlg}(D(\bs)^{\mathrm{Cpl}(p)})$$
where $T_A$ is the constant functor with value $A$, and $(-)^{\wedge}_p$ is the $p$-completion functor.
Then we have
$$THH(A,\bz_p):=(\mathrm{colim}\,T_A)^{\wedge}_p\simeq \mathrm{colim}\left((-)^{\wedge}_p\circ T_A\right)$$
$$\simeq \mathrm{colim}\left((-)^{\wedge}_p\circ T_{A^{\wedge}_p}\right)\simeq THH(A^{\wedge}_p,\bz_p).$$
 \end{rem}

\begin{prop}\label{decompoTHH}
Let $A$ be a commutative ring with bounded torsion. The diagram of $\bt$-equivariant $\mathbb{E}_{\infty}$-ring spectra 
\[ \xymatrix{
THH(A)\ar[d]\ar[r]& HH(A) \ar[d]\\
\prod_pTHH(A^{\wedge}_p,\bz_p)\ar[r]&\prod_pHH(A^{\wedge}_p,\bz_p) 
}
\]
is cartesian.
\end{prop}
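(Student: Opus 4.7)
My plan is to filter both columns by the $\mathcal{Z}^*$-filtration (trivially on the right column), reduce cartesianness of the square to cartesianness on associated graded pieces (valid because all filtrations will be complete), and then deduce the graded identity from Lemma \ref{lem-complete-HH} combined with a Chinese Remainder decomposition.

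I equip $THH(A)$ and $\prod_p THH(A^{\wedge}_p,\bz_p)$ with the $\mathcal{Z}^*$-filtration (the product receiving the term-wise filtration $\prod_p\mathcal{Z}^n THH(A^{\wedge}_p,\bz_p)$), and equip $HH(A)$ and $\prod_p HH(A^{\wedge}_p,\bz_p)$ with the trivial filtration concentrated in degree $0$. The horizontal maps are filtered: for $n\geq 1$, the composite $\mathcal{Z}^n THH(A)\to THH(A)\to HH(A)$ is induced, after $THH(A)\otimes_{THH(\bz)}(-)$, by the map $\tau_{\geq 2n-1}THH(\bz)\to\bz$, which vanishes because the source is $(2n-1)$-connective while the target is $0$-truncated. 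All four filtrations are $\bn^{\mathrm{op}}$-indexed and complete by Proposition \ref{prop-filt-Z} and stability of completeness under limits, so it suffices to check that the induced square of associated graded pieces is cartesian for every $n\geq 0$.

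The $n=0$ case is trivial, as the horizontal maps become identities. For $n\geq 1$ the right column vanishes, and since products commute with cofibers in $\mathrm{Sp}$ so that $\mathrm{gr}^n\prod_p=\prod_p\mathrm{gr}^n$, cartesianness reduces to showing that
$$HH(A)\otimes^L_\bz \bz/n\longrightarrow \prod_p HH(A^{\wedge}_p,\bz_p)\otimes^L_\bz \bz/n$$
is an equivalence. Write $n=\prod p^{a_p}$ and decompose $\bz/n\simeq \bigoplus_{p\mid n}\bz/p^{a_p}$. For $p'\neq p$, multiplication by $p'$ is invertible on the $p$-complete spectrum $HH(A^{\wedge}_p,\bz_p)$, so $HH(A^{\wedge}_p,\bz_p)\otimes^L\bz/(p')^{b}\simeq 0$; only the diagonal $p'=p$ summand survives and the target reduces to $\bigoplus_{p\mid n}HH(A^{\wedge}_p,\bz_p)\otimes^L\bz/p^{a_p}$. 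On the source side, Lemma \ref{lem-complete-HH} provides an equivalence of $p$-completed HKR filtrations, which after tensoring with $\bz/p^{a_p}$ (where $p$-completion becomes invisible since $X^{\wedge}_p\otimes\bz/p^{a_p}\simeq X\otimes\bz/p^{a_p}$) and passing to the colimit along the exhaustive HKR filtration yields $HH(A)\otimes^L\bz/p^{a_p}\simeq HH(A^{\wedge}_p,\bz_p)\otimes^L\bz/p^{a_p}$; summing over $p\mid n$ matches the two sides.

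The main technical point is the translation from Lemma \ref{lem-complete-HH}, which is a statement about complete $p$-completed HKR filtrations, to the underlying unfiltered equivalence $HH(A)\otimes^L\bz/p^{a_p}\simeq HH(A^{\wedge}_p,\bz_p)\otimes^L\bz/p^{a_p}$. Once this translation is established, the filtered bookkeeping above is straightforward and the cartesian property follows automatically from the $\mathcal{Z}^*$-filtration.
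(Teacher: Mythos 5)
Your argument is correct and rests on the same two pillars as the paper's proof, namely the complete filtration $\mathcal{Z}^*$ of Proposition \ref{prop-filt-Z}, whose positive graded pieces $HH(A)\otimes^L_{\bz}\bz/n[2n-1]$ are torsion and hence insensitive to completion, and Lemma \ref{lem-complete-HH}; but the packaging is genuinely different. The paper never filters the square: it shows that the fiber $\mathcal{Z}^1THH(A)$ of $THH(A)\rightarrow HH(A)$ is already profinite complete (graded piece by graded piece, which is exactly your mod-$n$ observation), so the square obtained by profinitely completing the top row is cartesian, and then identifies $THH(A)^{\wedge}\simeq\prod_pTHH(A^{\wedge}_p,\bz_p)$ and $HH(A)^{\wedge}\simeq\prod_pHH(A^{\wedge}_p,\bz_p)$ via Lemmas \ref{lem-complete-THH} and \ref{lem-complete-HH}. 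Your route filters both rows and compares graded squares, which lets you dispense with Lemma \ref{lem-complete-THH} altogether --- only the $HH$-comparison, reduced mod $p^{a_p}$, is ever used --- at the cost of more filtered bookkeeping; the paper's route avoids that bookkeeping but needs the extra lemma for $THH$. Two points in your write-up should be made precise. First, the statement concerns $\bt$-equivariant $\mathbb{E}_{\infty}$-rings; as in the paper, one reduces to a square of spectra because the forgetful functors preserve and reflect limits (\cite[Corollary 3.2.2.4]{LurieHA}). Second, degreewise vanishing of the composites $\mathcal{Z}^nTHH(A)\rightarrow HH(A)$ is not by itself a construction of a map in $DF(\bs)$ (one needs coherent nullhomotopies); the clean fix is to apply $THH(A)\otimes_{THH(\bz)}(-)$, and its $p$-completed analogue, to the canonical map of filtered $THH(\bz)$-modules $\tau_{\geq 2*-1}THH(\bz)\rightarrow\tau_{\leq 0}\bigl(\tau_{\geq 2*-1}THH(\bz)\bigr)$, whose target is the trivial filtration on $\bz$; naturality then also yields the commutativity of the filtered square. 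With these adjustments your reduction (completeness plus the automatic exhaustiveness of $\bn^{\mathrm{op}}$-indexed filtrations, then the total-fiber argument) and your Chinese Remainder computation on graded pieces go through, and the translation from Lemma \ref{lem-complete-HH} to the unfiltered mod-$p^{a_p}$ equivalence that you flag as the main technical point is immediate: the HKR filtration is $\bn^{\mathrm{op}}$-indexed, so its degree-zero term is all of $HH$ and no passage to a colimit is needed.
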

\begin{proof}
By \cite[Corollary 3.2.2.4]{LurieHA} it is enough to show that the square of the proposition is a pull-back square of spectra. First we show that 
\[ \xymatrix{
THH(A)\ar[d]\ar[r]& HH(A) \ar[d]\\
THH(A)^{\wedge}\ar[r]&HH(A)^{\wedge} 
}
\]
is cartesian. It is enough to show that the induced map from
$$\mathrm{Fib}\left(THH(A)\rightarrow HH(A)\right)$$
to
$$\mathrm{Fib}\left(THH(A)^{\wedge}\rightarrow HH(A)^{\wedge}\right)\simeq \left(\mathrm{Fib}
(THH(A)\rightarrow HH(A))\right)^{\wedge}$$
is an equivalence. Noting that $\mathrm{Fib}\left(THH(A)\rightarrow HH(A)\right)\simeq \mathcal{Z}^1THH(A)$ we need to show that
$$\mathcal{Z}^1THH(A)\rightarrow \left(\mathcal{Z}^1THH(A)\right)^{\wedge}$$
is an equivalence. There is a  complete exhaustive filtration $\mathcal{Z}^*$ on $\mathcal{Z}^1THH(A)$ such that  
$$\mathrm{gr}_{\mathcal{Z}}^n (\mathcal{Z}^1THH(A))\simeq HH(A)\otimes^L_{\bz} \bz/n[2n-1]$$
for all $n>0$ and $\mathrm{gr}_{\mathcal{Z}}^n (\mathcal{Z}^1T(A))=0$ for $n\leq0$. Hence it is enough to check that the map
$$HH(A)\otimes^L_{\bz} \bz/n[2n-1]\rightarrow \left(HH(A)\otimes^L_{\bz} \bz/n[2n-1]\right)^{\wedge}$$
is an equivalence for any $n>0$, which is clear. The result follows since the maps
$$HH(A)^{\wedge}\rightarrow \prod_pHH(A^{\wedge}_p,\bz_p)
\textrm{ and } 
THH(A)^{^{\wedge}}\rightarrow \prod_pTHH(A^{\wedge}_p,\bz_p)$$
are equivalences by Lemma \ref{lem-complete-HH} and Lemma \ref{lem-complete-THH} respectively.

\end{proof}

Either one of the following lemmas suffices to prove Proposition \ref{propfibproduct} below.
\begin{lem}\label{sauvelemma}
The canonical map of spectra
$$(\mathcal{Z}^1THH(A))_{h\bt}\longrightarrow \Prod_p \left((\mathcal{Z}^1THH(A))^{\wedge}_p\right)_{h\bt}\simeq \left((\mathcal{Z}^1THH(A))_{h\bt}\right)^{\wedge}$$
is an equivalence.
\end{lem}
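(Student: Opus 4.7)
The plan is to leverage the complete exhaustive filtration $\mathcal{Z}^{\ast}$ on $X := \mathcal{Z}^1 THH(A)$ constructed in the proof of Proposition \ref{decompoTHH}, whose graded pieces $G_n = HH(A) \otimes^L_{\bz} \bz/n [2n-1]$ are $n$-torsion spectra for $n>0$ (and zero otherwise), and which satisfies the connectivity bound $\mathcal{Z}^n X \in \mathrm{Sp}_{\geq 2n-1}$. Each of the functors $(-)^{\wedge}_p$, $\prod_p$, and $(-)_{h\bt}$ preserves this connectivity bound, so every induced filtration I will consider is automatically complete, and it will suffice to verify the asserted equivalences on graded pieces, where the $n$-torsion property trivializes matters.

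First I would prove the displayed identification $\prod_p (X^{\wedge}_p)_{h\bt} \simeq (X_{h\bt})^{\wedge}$ by showing, for each prime $p$, that the canonical map $(X^{\wedge}_p)_{h\bt} \to (X_{h\bt})^{\wedge}_p$ is an equivalence. Since $p$-completion is left adjoint to the inclusion of $p$-complete objects, it commutes with the colimit defining $(-)_{h\bt}$, giving $((X^{\wedge}_p)_{h\bt})^{\wedge}_p \simeq (X_{h\bt})^{\wedge}_p$. It then suffices to check that $(X^{\wedge}_p)_{h\bt}$ is already $p$-complete. Because $X \in \mathrm{Sp}_{\geq 1}$ and $p$-completion preserves connectivity, $X^{\wedge}_p$ is connective and $p$-complete; the Atiyah--Hirzebruch spectral sequence
$$E_2^{s,t} = H_s(B\bt;\pi_t X^{\wedge}_p) \Rightarrow \pi_{s+t}(X^{\wedge}_p)_{h\bt}$$
then has only finitely many nonzero columns in each total degree, with derived $p$-complete entries, so the abutment is derived $p$-complete.

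To see that the canonical map $X_{h\bt} \to (X_{h\bt})^{\wedge}$ is an equivalence, I apply $(-)_{h\bt}$ to $\mathcal{Z}^{\ast} X$: right $t$-exactness of $(-)_{h\bt}$ together with the bound $\mathcal{Z}^n X \in \mathrm{Sp}_{\geq 2n-1}$ produces a complete filtration on $X_{h\bt}$ with graded pieces $(G_n)_{h\bt}$, still $n$-torsion. Profinite completion is exact on $\mathrm{Sp}$ (each $p$-completion is a Bousfield localization, and products preserve fibers) and preserves the connectivity bound, so it yields a complete filtration on $(X_{h\bt})^{\wedge}$ with graded pieces $((G_n)_{h\bt})^{\wedge}$, together with a map of filtered spectra. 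The effect on graded pieces is the natural map $(G_n)_{h\bt} \to ((G_n)_{h\bt})^{\wedge}$: for any $n$-torsion spectrum $M$, the homotopy groups decompose as $\bigoplus_{p\mid n} M_{(p)}$ with each summand $p^{v_p(n)}$-torsion (hence automatically $p$-complete), so $M^{\wedge}_p = M_{(p)}$ for $p \mid n$ and vanishes otherwise; the finite product $\prod_p M^{\wedge}_p$ is thus identified with $M$ itself. The map is therefore an equivalence on graded pieces, and completeness of both filtrations promotes this to an equivalence on totals.

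The main obstacle is precisely that profinite completion, infinite products and the colimit $(-)_{h\bt}$ do not commute with one another in general; the uniform connectivity bound coming from $\mathcal{Z}^{\ast}$ resolves this difficulty by reducing every relevant computation to a finite spectral-sequence extension at the level of homotopy groups, and, for each graded piece, to a finite product indexed by the primes dividing the corresponding $n$.
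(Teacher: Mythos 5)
Your proposal is correct and follows essentially the same route as the paper: filter $\mathcal{Z}^1THH(A)$ by the $\bn^{\mathrm{op}}_{>0}$-indexed filtration $\mathcal{Z}^*$, deduce completeness of the induced filtrations from right $t$-exactness of $(-)_{h\bt}$ and of (pro)finite completion together with the connectivity bound $\mathcal{Z}^n\in\mathrm{Sp}_{\geq 2n-1}$, and conclude on graded pieces, where $HC(A)\otimes^L_{\bz}\bz/n[2n-1]$ is a bounded $n$-torsion spectrum so that profinite completion is an equivalence (a finite sum of $p$-primary summands over $p\mid n$). The only difference is that you also justify the identification $\prod_p\left((\mathcal{Z}^1THH(A))^{\wedge}_p\right)_{h\bt}\simeq\left((\mathcal{Z}^1THH(A))_{h\bt}\right)^{\wedge}$ explicitly, via the homotopy-orbit spectral sequence and derived $p$-completeness of its entries for the connective spectrum $(\mathcal{Z}^1THH(A))^{\wedge}_p$, whereas the paper leaves this implicit and runs the filtration comparison directly against $\prod_p((\mathcal{Z}^*)^{\wedge}_p)_{h\bt}$.
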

\begin{proof}
The complete exhaustive $\bn^{\mathrm{op}}_{>0}$-indexed filtration $\mathcal{Z}^*$ on $\mathcal{Z}^1THH(A)$ of the previous proof induces a complete exhaustive filtration $(\mathcal{Z}^*)_{h\bt}$ (respectively $\prod_p((\mathcal{Z}^*)^{\wedge}_p)_{h\bt}$) on the source (respectively the target) of the map of the Lemma. Indeed, the completeness of these filtrations follow from the fact that both $(-)_{h\bt}$ and $(-)^{\wedge}_{p}$ are right $t$-exact, as the connectivity of $\mathcal{Z}^*$ tends to infinity. The map of complete filtrations $(\mathcal{Z}^*)_{h\bt}\rightarrow \prod_p((\mathcal{Z}^*)^{\wedge}_p)_{h\bt}$ induces 
$$(\mathrm{gr}^n_{\mathcal{Z}})_{h\bt}\rightarrow \prod_p((\mathrm{gr}^n_{\mathcal{Z}})^{\wedge}_p)_{h\bt}$$
on $n$-th graded pieces, which identifies with the equivalence
$$HC(A)\otimes^L_{\bz} \bz/n[2n-1]\rightarrow \prod_p(HC(A)\otimes^L_{\bz} \bz/n[2n-1])^{\wedge}_p$$
for any $n>0$.
\end{proof}

\begin{lem}\label{sauvelemma2}
The map $$\left(\prod_pTHH(A^{\wedge}_p,\bz_p)\right)_{h\bt}\rightarrow \prod_pTHH(A^{\wedge}_p,\bz_p)_{h\bt}\simeq \Prod_pTC^+(A^{\wedge}_p,\bz_p) $$
is an equivalence, and similarly for $HH$.
\end{lem}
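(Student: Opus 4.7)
The plan is to reduce the question to a stabilization argument via the skeletal decomposition $B\bt=\mathrm{colim}_n\bc P^n$. Writing $X_p:=THH(A^{\wedge}_p,\bz_p)$ (the argument for $HH$ will be identical), each $X_p$ is a connective $\bt$-equivariant spectrum, and the connectivity bound $X_p\in\mathrm{Sp}_{\geq 0}$ is uniform in $p$; in particular $\prod_p X_p$ is also connective.

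The first step is to introduce, for any $\bt$-spectrum $X$, the $n$-th Borel approximation $X_{h\bt}^{(n)}:=X\wedge_{\bt}S^{2n+1}_+$, so that $X_{h\bt}\simeq\mathrm{colim}_n X_{h\bt}^{(n)}$. The cell attachment $\bc P^{n-1}\hookrightarrow \bc P^n$ with quotient $S^{2n}$ yields natural cofiber sequences $X_{h\bt}^{(n-1)}\to X_{h\bt}^{(n)}\to X[2n]$ in $\mathrm{Sp}$. Because finite cofibers in the stable $\infty$-category of spectra are also finite limits (equivalently, biproducts up to shift), they commute with arbitrary products; induction on $n$ then gives a natural equivalence $\bigl(\prod_p X_p\bigr)_{h\bt}^{(n)}\simeq\prod_p (X_p)_{h\bt}^{(n)}$ at each finite level.

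It remains to commute $\mathrm{colim}_n$ with $\prod_p$. Since $X_p\in\mathrm{Sp}_{\geq 0}$, the cofiber $X_p[2n]\in\mathrm{Sp}_{\geq 2n}$, so $\pi_i(X_p)_{h\bt}^{(n-1)}\to \pi_i(X_p)_{h\bt}^{(n)}$ is an isomorphism for $i<2n$, and the analogous statement holds for $\prod_p X_p$ with the same threshold depending only on $i$. Consequently, for $N$ large enough with respect to $i$, both $\pi_i(X_p)_{h\bt}^{(N)}\to\pi_i(X_p)_{h\bt}$ and $\pi_i(\prod_p X_p)_{h\bt}^{(N)}\to \pi_i(\prod_p X_p)_{h\bt}$ become isomorphisms. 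Combining with the finite-level commutation established above, the canonical map $(\prod_p X_p)_{h\bt}\to\prod_p(X_p)_{h\bt}$ induces an isomorphism on each $\pi_i$, and hence is an equivalence of spectra.

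The hard part is the uniformity of the stabilization threshold, which is what forces us to use the connectivity hypothesis uniformly in $p$; this uniformity is however automatic for $\{THH(A^{\wedge}_p,\bz_p)\}_p$ and $\{HH(A^{\wedge}_p,\bz_p)\}_p$, as both are families of connective spectra. Hence the $HH$-case of the lemma follows by the same argument verbatim.
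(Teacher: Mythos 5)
Your argument is correct, but it takes a different route from the paper. You filter the classifying space: writing $(-)_{h\bt}$ as the colimit of the finite Borel approximations $X\wedge_{\bt}S^{2n+1}_+$ coming from the skeleta of $E\bt$, you observe that each finite stage is built from shifts of $X$ by finitely many cofiber sequences and hence commutes with arbitrary products, and then you use the uniform connectivity of $THH(A^{\wedge}_p,\bz_p)$ (resp. $HH(A^{\wedge}_p/\bz_p,\bz_p)$) to see that the colimit over $n$ stabilizes on each $\pi_i$ at a stage depending only on $i$, so it commutes with $\prod_p$. The paper instead filters the spectrum: it applies \cite[Lemma 3.3]{BMS} to the weak Postnikov tower $\{\prod_p\tau_{\leq n}THH(A^{\wedge}_p,\bz_p)\}$ to write both sides as limits over $n$ of homotopy orbits of truncations, and then reduces by induction on Postnikov stages to the assertion that $(-)_{h\bt}$ commutes with $\prod_p$ on the Eilenberg--MacLane spectra $\pi_nTHH(A^{\wedge}_p,\bz_p)$. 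The two proofs are complementary: the paper's is shorter because it outsources the convergence statement to BMS, but its Eilenberg--MacLane base case is left as a fact, and the most natural justification of that fact is precisely your skeletal, cell-by-cell argument; your proof is self-contained and yields the explicit degreewise stabilization, at the cost of redoing by hand what \cite[Lemma 3.3]{BMS} packages. One tiny imprecision in your write-up: from the cofiber sequence $X^{(n-1)}_{h\bt}\rightarrow X^{(n)}_{h\bt}\rightarrow X[2n]$ with $X$ connective, the map on $\pi_i$ is an isomorphism for $i\leq 2n-2$ and only a surjection for $i=2n-1$, not an isomorphism for all $i<2n$; this off-by-one is harmless, since all you need is that for fixed $i$ the system $\pi_iX^{(n)}_{h\bt}$ is eventually constant with a threshold depending only on $i$, uniformly in $p$.
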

\begin{proof}

Applying \cite[Lemma 3.3]{BMS} to the weak Postnikov tower $\{\prod_p \tau_{\leq n}THH(A^{\wedge}_p,\bz_p)\}$, we obtain 
\begin{eqnarray*}
\left(\prod_pTHH(A^{\wedge}_p,\bz_p)\right)_{h\bt}&\simeq& \left(\prod_p\mathrm{lim}_{n} \tau_{\leq n}THH(A^{\wedge}_p,\bz_p)\right)_{h\bt}\\
&\simeq& \left(\mathrm{lim}_{n} \prod_p \tau_{\leq n}THH(A^{\wedge}_p,\bz_p)\right)_{h\bt}\\
&\simeq& \mathrm{lim}_{n} \left( \prod_p \tau_{\leq n}THH(A^{\wedge}_p,\bz_p)\right)_{h\bt}.
\end{eqnarray*}
Similarly, we have 
\begin{eqnarray*}
\prod_pTHH(A^{\wedge}_p,\bz_p)_{h\bt}\simeq  \prod_p\left(\mathrm{lim}_{n} \tau_{\leq n}THH(A^{\wedge}_p,\bz_p)\right)_{h\bt}\simeq  \mathrm{lim}_{n} \prod_p \left(\tau_{\leq n}THH(A^{\wedge}_p,\bz_p)\right)_{h\bt}
\end{eqnarray*}
hence it suffices to prove the result for $\tau_{\leq n}THH(A^{\wedge}_p,\bz_p)$. By induction and shifting, the result then follows from the fact that  the map
$$\left(\prod_p\pi_nTHH(A^{\wedge}_p,\bz_p)\right)_{h\bt}\rightarrow \prod_p\left(\pi_nTHH(A^{\wedge}_p,\bz_p)\right)_{h\bt}$$ 
is an equivalence for any fixed $n\geq 0$. The same argument works for $HH(-,\bz_p)$.

\end{proof}

\begin{prop}\label{propfibproduct}
Let $A$ be a commutative ring with bounded torsion. The squares of $\mathbb{E}_{\infty}$-ring spectra
\[ \xymatrix{
TC^-(A)\ar[d]\ar[r]& HC^-(A) \ar[d]\\
\prod_pTC^-(A^{\wedge}_p,\bz_p)\ar[r]&\prod_pHC^-(A^{\wedge}_p,\bz_p) 
}
\]
and
\[ \xymatrix{
TP(A)\ar[d]\ar[r]& HP(A) \ar[d]\\
\prod_pTP(A^{\wedge}_p,\bz_p)\ar[r]&\prod_pHP(A^{\wedge}_p,\bz_p) 
}
\]
are both cartesian. Similarly, the square of spectra 
\[ \xymatrix{
TC^+(A)\ar[d]\ar[r]& HC(A) \ar[d]\\
\prod_pTC^+(A^{\wedge}_p,\bz_p)\ar[r]&\prod_pHC(A^{\wedge}_p,\bz_p) 
}
\]
is cartesian.  
\end{prop}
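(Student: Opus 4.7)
The plan is to deduce all three cartesian squares by applying the exact functors $(-)^{h\bt}$, $(-)^{t\bt}$ and $(-)_{h\bt}$ to the cartesian square of Proposition \ref{decompoTHH}. Recall that in a stable $\infty$-category a square is cartesian if and only if the induced map on horizontal fibers is an equivalence, and that any exact functor between stable $\infty$-categories preserves fiber sequences. Consequently, if $F$ denotes any of the three functors above and if $F$ commutes with the product $\prod_p$ appearing in the bottom row of Proposition \ref{decompoTHH}, then the square obtained by applying $F$ to that proposition is again cartesian. By Lemmas \ref{lem-complete-HH} and \ref{lem-complete-THH}, the horizontal fiber of the bottom row of Proposition \ref{decompoTHH} is naturally identified with $\prod_p (\mathcal{Z}^1 THH(A))^{\wedge}_p$, while the horizontal fiber of the top row is $\mathcal{Z}^1 THH(A)$; the content of Proposition \ref{decompoTHH} is precisely that these fibers agree canonically.

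For $F = (-)^{h\bt}$ the functor is a homotopy limit and therefore commutes with arbitrary products, which immediately yields the cartesian square for $TC^-$ versus $HC^-$. For $F = (-)_{h\bt}$ the functor is a colimit and does not a priori commute with products, but this is exactly the content of Lemmas \ref{sauvelemma} and \ref{sauvelemma2}: the first identifies $(\mathcal{Z}^1 THH(A))_{h\bt}$ with $\prod_p ((\mathcal{Z}^1 THH(A))^{\wedge}_p)_{h\bt}$ directly, while the second commutes $(-)_{h\bt}$ past the product $\prod_p$ on $THH(A^{\wedge}_p,\bz_p)$ and on $HH(A^{\wedge}_p,\bz_p)$; either one suffices to produce the cartesian square for $TC^+$ versus $HC$. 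Finally, for $F = (-)^{t\bt}$ one uses the natural cofiber sequence $\Sigma(-)_{h\bt}\to (-)^{h\bt}\to (-)^{t\bt}$: since the required commutation with $\prod_p$ already holds for $(-)_{h\bt}$ and $(-)^{h\bt}$, it follows automatically for $(-)^{t\bt}$, giving the cartesian square for $TP$ versus $HP$.

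The only substantive point in this plan is thus the commutation of $(-)_{h\bt}$ with the product $\prod_p$, established by Lemmas \ref{sauvelemma} and \ref{sauvelemma2} by $t$-structure arguments. The key input is the increasing connectivity of the filtration $\mathcal{Z}^*$ on $\mathcal{Z}^1 THH(A)$ together with the right $t$-exactness of both $(-)_{h\bt}$ and $p$-completion, which reduces the claim to the case of the graded pieces $HH(A)\otimes^L_{\bz}\bz/n[2n-1]$, where it is clear; alternatively, a Postnikov tower argument in the spirit of \cite[Lemma 3.3]{BMS} reduces the question to the discrete homotopy groups $\pi_n THH(A^{\wedge}_p,\bz_p)$. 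Once this commutation is granted, the three cartesian squares of the proposition fall out formally from Proposition \ref{decompoTHH}.
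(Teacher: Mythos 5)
Your proposal is correct and follows essentially the same route as the paper: reduce to squares of spectra, apply $(-)^{h\bt}$ (which preserves limits, hence products and pullbacks) to the cartesian square of Proposition \ref{decompoTHH} for the $TC^-$ square, use exactness of $(-)_{h\bt}$ together with either Lemma \ref{sauvelemma} or Lemma \ref{sauvelemma2} for the $TC^+$ square, and deduce the $TP$ square from the norm cofiber sequence $\Sigma(-)_{h\bt}\to(-)^{h\bt}\to(-)^{t\bt}$. The only point the paper makes explicitly that you leave implicit is that the first two diagrams are squares of $\mathbb{E}_{\infty}$-rings because $(-)^{h\bt}$ and $(-)^{t\bt}$ are lax symmetric monoidal, and that cartesianness may be checked on underlying spectra.
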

\begin{proof}

The first and the second square are both commutative squares of $\mathbb{E}_{\infty}$-ring spectra because the functors $(-)^{h\bt}$ and $(-)^{t\bt}$ are both lax symmetric monoidal \cite[Corollary I.4.3]{Nikolaus-Scholze18}. It is enough to check that they are cartesian as squares of spectra by \cite[Corollary 3.2.2.4]{LurieHA}. Concerning the first square, this follows from Proposition \ref{decompoTHH} since the functor $(-)^{h\bt}$ preserves small limits, in particular with countable products and fiber products. It follows from Proposition \ref{decompoTHH}, the exactness of $(-)_{h\bt}$, and either Lemma \ref{sauvelemma} or Lemma \ref{sauvelemma2}, that the third square is a cartesian of spectra. Taking the cofiber of the norm map $\Sigma(-)_{h\bt}\rightarrow (-)^{h\bt}$, we see that the second square is also cartesian.

\end{proof}

\subsection{Morphisms of filtrations}

\begin{prop}\label{mapfiltHKR}
Let $p$ be a prime number and let $A$ be a commutative ring with bounded $p^{\infty}$-torsion such that $A^{\wedge}_p\in \mathrm{QSyn}_{\bz_p}$. Then there is a canonical morphism of complete multiplicative $\bt$-equivariant filtrations
$$\mathrm{Fil}_{HKR}^*HH(A)\rightarrow \mathrm{Fil}_{BMS}^*HH(A^{\wedge}_p/\bz_p,\bz_p)$$
which induces an equivalence after $p$-completion. 
\end{prop}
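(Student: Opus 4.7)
The plan is to factor the desired morphism through $\mathrm{Fil}_{HKR}^* HH(A^{\wedge}_p/\bz_p)$. In the first step, the ring map $A\to A^{\wedge}_p$ and the base change $\bz\to\bz_p$ induce, by functoriality of Hochschild homology and of the cotangent complex (hence of the HKR filtration and its graded pieces $L\Lambda^n L_{-/-}[n]$), a canonical map
$$\mathrm{Fil}_{HKR}^*HH(A)\longrightarrow \mathrm{Fil}_{HKR}^*HH(A^{\wedge}_p/\bz_p)$$
of complete multiplicative $\bt$-equivariant filtrations in $\widehat{DF}(\bs[\bt])$, which is an equivalence after $p$-completion by Lemma \ref{lem-complete-HH}. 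It then remains to produce a canonical equivalence
$$\left(\mathrm{Fil}_{HKR}^*HH(A^{\wedge}_p/\bz_p)\right)^{\wedge}_p \simeq \mathrm{Fil}_{BMS}^*HH(A^{\wedge}_p/\bz_p,\bz_p)$$
of complete multiplicative $\bt$-equivariant filtrations.

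I would obtain this equivalence by invoking the unfolding equivalence (\ref{unfolding}) applied to the presentable category $\mathcal{C}=\mathrm{CAlg}(\widehat{DF}(\bs[\bt]))$. Two points then need to be checked: $(i)$ the presheaf $(\mathrm{Fil}_{HKR}^*HH(-/\bz_p))^{\wedge}_p$ is a $\mathcal{C}$-valued sheaf for the quasisyntomic topology on $\mathrm{QSyn}_{\bz_p}^{\mathrm{op}}$; and $(ii)$ on any $S\in \mathrm{QRSPerfd}_{\bz_p}$ the $p$-completed HKR filtration agrees with the double-speed Whitehead tower $\tau_{\geq 2*}HH(S/\bz_p,\bz_p) = \mathrm{Fil}_{BMS}^*HH(S/\bz_p,\bz_p)$. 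Point $(i)$ follows from quasisyntomic descent for the $p$-completed derived exterior powers $(L\Lambda^n L_{-/\bz_p})^{\wedge}_p$ of the cotangent complex (as in \cite[Section 3]{BMS}), combined with the fact that completeness, multiplicativity and $\bt$-equivariance are all preserved under limits in $\widehat{DF}(\bs[\bt])$. For $(ii)$, since $S$ is quasiregular semiperfectoid, $L_{S/\bz_p}[1]$ is $p$-completely flat concentrated in homological degree $0$, so $(L\Lambda^n L_{S/\bz_p})^{\wedge}_p[n]$ lives in homological degree exactly $2n$; induction along the fiber sequences $\mathrm{Fil}_{HKR}^{n+1}\to \mathrm{Fil}_{HKR}^n \to \mathrm{gr}_{HKR}^n$ then yields $(\mathrm{Fil}_{HKR}^n HH(S/\bz_p))^{\wedge}_p\in \mathrm{Sp}_{\geq 2n}$, so the canonical comparison into $\tau_{\geq 2n}HH(S/\bz_p,\bz_p)$ is an equivalence of exhaustive filtrations with matching graded pieces (both equal to $(L\Lambda^n L_{S/\bz_p})^{\wedge}_p[n]$ placed in degree $2n$).

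The main obstacle will be upgrading the equivalence in $(ii)$ from an equivalence of underlying complete filtered spectra to one of $\bt$-equivariant $\mathbb{E}_\infty$-algebras in $\widehat{DF}(\bs[\bt])$, and then globalizing it over $\mathrm{QSyn}_{\bz_p}^{\mathrm{op}}$ via unfolding without losing this structure. Here I would use that (\ref{unfolding}) is natural in the coefficient $\infty$-category $\mathcal{C}$ and is itself an equivalence of sheaf $\infty$-categories, so any equivalence of $\mathcal{C}$-valued sheaves on $\mathrm{QRSPerfd}_{\bz_p}^{\mathrm{op}}$ extends uniquely to an equivalence on $\mathrm{QSyn}_{\bz_p}^{\mathrm{op}}$. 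Composing with the functorial map from the first step produces the required morphism $\mathrm{Fil}_{HKR}^*HH(A)\to \mathrm{Fil}_{BMS}^*HH(A^{\wedge}_p/\bz_p,\bz_p)$, and its $p$-completion is an equivalence by combining Lemma \ref{lem-complete-HH} with the comparison produced in the second step.
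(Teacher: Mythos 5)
Your proposal is correct and follows essentially the same route as the paper: both reduce to producing, via quasisyntomic descent (the unfolding equivalence), a comparison $(\mathrm{Fil}_{HKR}^*HH(R/\bz_p))^{\wedge}_p\simeq \tau_{\geq 2*}HH(R/\bz_p,\bz_p)$ for $R\in\mathrm{QRSPerfd}_{\bz_p}$ using the even-degree concentration of $(L\Lambda^iL_{R/\bz_p})^{\wedge}_p[i]$ from \cite[Lemma 5.14]{BMS}, and then compose with the map of Lemma \ref{lem-complete-HH}. The only cosmetic difference is that you factor through $\mathrm{Fil}_{HKR}^*HH(A^{\wedge}_p/\bz_p)$ before $p$-completing, while the paper first $p$-completes $\mathrm{Fil}_{HKR}^*HH(A)$; the resulting composite is the same.
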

\begin{proof}
First we define an equivalence
$$ \left(\mathrm{Fil}_{HKR}^*HH(A^{\wedge}_p/\bz_p)\right)^{\wedge}_p\stackrel{\sim}{\longrightarrow} \mathrm{Fil}_{BMS}^*HH(A^{\wedge}_p/\bz_p,\bz_p).$$
The presheaves $\left(\mathrm{Fil}_{HKR}^*HH(-/\bz_p)\right)^{\wedge}_p$ and $\mathrm{Fil}_{BMS}^*HH(-/\bz_p,\bz_p)$ are sheaves
on $\mathrm{QSyn}^{\mathrm{op}}_{\bz_p}$ with values in $\widehat{DF}(\bz_p[\bt])$. So it is enough to define an equivalence
$$ \left(\mathrm{Fil}_{HKR}^*HH(R/\bz_p)\right)^{\wedge}_p\stackrel{\sim}{\longrightarrow} \mathrm{Fil}_{BMS}^*HH(R/\bz_p,\bz_p)\simeq \tau_{\geq 2*}HH(R/\bz_p,\bz_p)$$
functorial in $R\in \mathrm{QRSPerfd}_{\bz_p}$. But 
$$\left(\mathrm{gr}_{HKR}^{i}HH(R/\bz_p)\right)^{\wedge}_p\simeq \left(L\Lambda^{i}L_{R/\bz_p}\right)^{\wedge}_p[i]$$
is concentrated in homological degree $2i$ by \cite[Lemma 5.14(1)]{BMS}, hence $\left(\mathrm{Fil}_{HKR}^{i}HH(R/\bz_p)\right)^{\wedge}_p$ is concentrated in homological degrees $\geq 2i$ by induction. 
Hence the map $$\left(\mathrm{Fil}_{HKR}^{i}HH(R/\bz_p)\right)^{\wedge}_p\rightarrow HH(R/\bz_p)^{\wedge}_p=:HH(R/\bz_p,\bz_p)$$
factors through $\tau_{\geq 2i}HH(R/\bz_p,\bz_p)$. This gives a map of complete multiplicative $\bt$-equivariant filtrations  
$$ \left(\mathrm{Fil}_{HKR}^*HH(R/\bz_p)\right)^{\wedge}_p\longrightarrow \tau_{\geq 2*}HH(R/\bz_p,\bz_p)$$
which is an equivalence since
$$\left(\mathrm{gr}_{HKR}^{i}HH(R/\bz_p)\right)^{\wedge}_p\simeq \left(L\Lambda^{i}L_{R/\bz_p}\right)^{\wedge}_p[i] \stackrel{\sim}{\longrightarrow} (\pi_{2i}HH(R/\bz_p,\bz_p))[2i]$$
for any $i\geq 0$.

The morphism of the proposition is then given by the composite morphism
\begin{eqnarray*}
\mathrm{Fil}_{HKR}^*HH(A)&\longrightarrow& \left(\mathrm{Fil}_{HKR}^*HH(A)\right)^{\wedge}_p\\
&\stackrel{\sim}{\longrightarrow}& \left(\mathrm{Fil}_{HKR}^*HH(A^{\wedge}_p/\bz_p)\right)^{\wedge}_p\\
&\stackrel{\sim}{\longrightarrow}& \mathrm{Fil}_{BMS}^*HH(A^{\wedge}_p/\bz_p,\bz_p)
\end{eqnarray*}
where the first map is the $p$-adic completion map, the second map is given by  Lemma \ref{lem-complete-HH} and the third map is defined above. The result follows.

\end{proof}

In the statement below, we denote by $\mathrm{Fil}_{B}^*$ the filtration defined in \cite{Antieau18}.

\begin{prop}\label{prop-FilBpcompleteed}
Let $p$ be a prime number and let $A$ be a commutative ring with bounded $p^{\infty}$-torsion such that $A^{\wedge}_p\in \mathrm{QSyn}_{\bz_p}$. Then there are canonical maps of complete multiplicative filtrations
$$\mathrm{Fil}_{B}^*HC^{-}(A)\rightarrow \mathrm{Fil}_{BMS}^*HC^{-}(A^{\wedge}_p/\bz_p,\bz_p)$$
$$\mathrm{Fil}_{B}^*HP(A)\rightarrow \mathrm{Fil}_{BMS}^*HP(A^{\wedge}_p/\bz_p,\bz_p)$$
which induce equivalences after $p$-completion. 
\end{prop}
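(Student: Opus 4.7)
The plan is to imitate the proof of Proposition \ref{mapfiltHKR} one level up the Hochschild hierarchy, using that both the Antieau filtration and the BMS filtration on $HC^-$ and $HP$ are obtained from their $HH$--counterparts via (filtered versions of) the lax symmetric monoidal functors $(-)^{h\bt}$ and $(-)^{t\bt}$. I will describe the case of $HC^-$; the case of $HP$ is entirely parallel, with $(-)^{t\bt}$ replacing $(-)^{h\bt}$ and with graded pieces $(L\widehat{\Omega}_{R/\bz_p})^{\wedge}_p[2n]$ instead of $(L\widehat{\Omega}_{R/\bz_p}^{\geq n})^{\wedge}_p[2n]$.

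First I would produce, for $R\in \mathrm{QRSPerfd}_{\bz_p}$, a natural equivalence
$$(\mathrm{Fil}_B^*HC^-(R/\bz_p))^{\wedge}_p\stackrel{\sim}{\longrightarrow}\tau_{\geq 2*}HC^-(R/\bz_p,\bz_p)=\mathrm{Fil}_{BMS}^*HC^-(R/\bz_p,\bz_p)$$
of complete multiplicative filtrations. The Antieau filtration is complete with $p$--completed graded pieces equivalent to $(L\widehat{\Omega}_{R/\bz_p}^{\geq n})^{\wedge}_p[2n]$, using the $p$--complete base change $L_{R/\bz}\otimes^L_{\bz}\bz_p\simeq L_{R/\bz_p}$ already exploited in Proposition \ref{mapfiltHKR}. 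By \cite[Lemma 5.14]{BMS} each such graded piece is concentrated in homological degree $2n$, so by induction $(\mathrm{Fil}_B^nHC^-(R/\bz_p))^{\wedge}_p$ is concentrated in homological degrees $\geq 2n$. The canonical map to $HC^-(R/\bz_p,\bz_p)$ therefore factors through $\tau_{\geq 2n}HC^-(R/\bz_p,\bz_p)$, and the resulting map of filtrations is an equivalence by inspection on graded pieces.

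By the unfolding equivalence (\ref{unfolding}) applied to the $\widehat{DF}(\bz_p)$--valued sheaf $(\mathrm{Fil}_B^*HC^-(-/\bz_p))^{\wedge}_p$, this globalises to an equivalence
$$(\mathrm{Fil}_B^*HC^-(A^{\wedge}_p/\bz_p))^{\wedge}_p\simeq \mathrm{Fil}_{BMS}^*HC^-(A^{\wedge}_p/\bz_p,\bz_p)$$
for every $A$ with $A^{\wedge}_p\in \mathrm{QSyn}_{\bz_p}$. I would then establish the $HC^-$--analog of Lemma \ref{lem-complete-HH}, i.e.\ that the map $(\mathrm{Fil}_B^*HC^-(A))^{\wedge}_p\to (\mathrm{Fil}_B^*HC^-(A^{\wedge}_p/\bz_p))^{\wedge}_p$ is an equivalence of complete multiplicative filtrations: this follows from Lemma \ref{lem-complete-HH} applied to $\mathrm{Fil}_{HKR}^*HH$, together with the fact that $(-)^{h\bt}$ takes equivalences of complete filtrations to equivalences of complete filtrations and commutes with the relevant $p$--completions thanks to the increasing connectivity of the graded pieces.

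The desired map is then the composite
$$\mathrm{Fil}_B^*HC^-(A)\rightarrow (\mathrm{Fil}_B^*HC^-(A))^{\wedge}_p\stackrel{\sim}{\rightarrow}(\mathrm{Fil}_B^*HC^-(A^{\wedge}_p/\bz_p))^{\wedge}_p\stackrel{\sim}{\rightarrow}\mathrm{Fil}_{BMS}^*HC^-(A^{\wedge}_p/\bz_p,\bz_p),$$
whose source is multiplicative by construction and which is tautologically a $p$--completion equivalence. The main obstacle is the first step on quasiregular semiperfectoid $R$: one must carefully match the graded pieces $L\widehat{\Omega}^{\geq n}_{R/\bz}$ of Antieau's integrally defined filtration with $L\widehat{\Omega}^{\geq n}_{R/\bz_p}$ after $p$--completion, which reduces to $p$--complete base change of the (derived) de Rham complex, parallel to the cotangent-complex identification used in the proof of Proposition \ref{mapfiltHKR}.
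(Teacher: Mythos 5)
Your overall skeleton matches the paper's: produce the local equivalence $(\mathrm{Fil}_B^*H?(R/\bz_p))^{\wedge}_p\simeq\tau_{\geq 2*}H?(R/\bz_p,\bz_p)$ for $R\in\mathrm{QRSPerfd}_{\bz_p}$ via evenness, unfold to $\mathrm{QSyn}_{\bz_p}$, and take the composite with the $p$-completion map. But there is a genuine gap at the change-of-rings step. You deduce that $(\mathrm{Fil}_B^*HC^-(A))^{\wedge}_p\rightarrow(\mathrm{Fil}_B^*HC^-(A^{\wedge}_p/\bz_p))^{\wedge}_p$ is an equivalence ``from Lemma \ref{lem-complete-HH} applied to $\mathrm{Fil}_{HKR}^*HH$, together with the fact that $(-)^{h\bt}$ takes equivalences of complete filtrations to equivalences''. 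This presupposes that the filtration $\mathrm{Fil}_B^*$ of \cite{Antieau18} is obtained by applying $(-)^{h\bt}$ (resp. $(-)^{t\bt}$) termwise to the $\bt$-equivariant HKR filtration on $HH$. That identification is not established and is in fact false: a termwise application would have $n$-th graded piece $\bigl(L\Lambda^nL_{A/\bz}[n]\bigr)^{h\bt}$ (resp. $\bigl(L\Lambda^nL_{A/\bz}[n]\bigr)^{t\bt}$), whereas $\mathrm{gr}_B^nHC^-(A)\simeq L\widehat{\Omega}^{\geq n}_{A/\bz}[2n]$ and $\mathrm{gr}_B^nHP(A)\simeq L\widehat{\Omega}_{A/\bz}[2n]$; the Beilinson filtration is built from the Beilinson t-structure (equivalently by descent of double-speed Whitehead towers) and genuinely mixes the HKR weights so as to produce the de Rham differential. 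So your step 3 does not prove what is needed. The repair is exactly the paper's argument, and you already have all the ingredients from your step 1: the map $(\mathrm{Fil}_B^*H?(A))^{\wedge}_p\rightarrow(\mathrm{Fil}_B^*H?(A^{\wedge}_p/\bz_p))^{\wedge}_p$ is a map of complete filtrations, so it suffices to check it on $\mathrm{gr}_B^i$; by \cite[Theorem 1.1]{Antieau18} this is the map $(L\widehat{\Omega}^{(\geq i)}_{A/\bz})^{\wedge}_p[2i]\rightarrow(L\widehat{\Omega}^{(\geq i)}_{A^{\wedge}_p/\bz_p})^{\wedge}_p[2i]$, and using the (complete) Hodge filtration one reduces to the equivalence $(L\Lambda^iL_{A/\bz})^{\wedge}_p\simeq(L\Lambda^iL_{A^{\wedge}_p/\bz_p})^{\wedge}_p$ established in the proof of Lemma \ref{lem-complete-HH}.

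Two smaller points in your local step. First, \cite[Lemma 5.14]{BMS} only gives that each Hodge graded piece $(L\Lambda^iL_{R/\bz_p})^{\wedge}_p[-i]$ is concentrated in degree $0$ for $R$ quasiregular semiperfectoid; to conclude that the Hodge-completed objects $(L\widehat{\Omega}^{\geq n}_{R/\bz_p})^{\wedge}_p$ and $(L\widehat{\Omega}_{R/\bz_p})^{\wedge}_p$ are concentrated in degree $0$ one still needs the $R\mathrm{lim}$ argument (the finite Hodge quotients are in degree $0$ with transition maps surjective on $H^0$), which the paper spells out. Second, the unfolding step requires verifying that $(\mathrm{Fil}_B^*H?(-/\bz_p))^{\wedge}_p$ is a quasisyntomic sheaf on $\mathrm{QSyn}_{\bz_p}^{\mathrm{op}}$, which is checked on graded pieces via \cite[Example 5.11]{BMS}; you assert the sheaf property without justification. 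These two are routine to fill, but the issue in the previous paragraph is a real gap in the argument as written.
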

\begin{proof}
We treat the case of $HP$; the case of $HC^{-}$ is similar. We shall define a composite map of complete multiplicative filtrations
$$\mathrm{Fil}_{B}^*HP(A)\rightarrow \left(\mathrm{Fil}_{B}^*HP(A)\right)^{\wedge}_p\stackrel{\sim}{\rightarrow} \left(\mathrm{Fil}_{B}^*HP(A^{\wedge}_p/\bz_p)\right)^{\wedge}_p \stackrel{\sim}{\rightarrow} \mathrm{Fil}_{BMS}^*HP(A^{\wedge}_p/\bz_p,\bz_p).$$
where the first map is the $p$-completion map. First we consider the evident morphism of complete multiplicative filtrations
\begin{equation}\label{oneequi}
\left(\mathrm{Fil}_{B}^*HP(A)\right)^{\wedge}_p\longrightarrow \left(\mathrm{Fil}_{B}^*HP(A^{\wedge}_p/\bz_p)\right)^{\wedge}_p.
\end{equation}
In order to check that this morphism is an equivalence, it is enough to show that
$$\left(L\widehat{\Omega}_{A/\bz}[2i]\right)^{\wedge}_p\simeq \left(\mathrm{gr}_{B}^{i}HP(A)\right)^{\wedge}_p\longrightarrow \left(\mathrm{gr}_{B}^{i}HP(A^{\wedge}_p/\bz_p)\right)^{\wedge}_p\simeq \left(L\widehat{\Omega}_{A^{\wedge}_p/\bz_p}[2i]\right)^{\wedge}_p$$
is an equivalence, where we use \cite[Theorem 1.1(a)]{Antieau18}. Using the Hodge filtration, this follows from the fact that 
$$\left(L\Lambda^{i}L_{A/\bz}\right)^{\wedge}_p \rightarrow \left(L\Lambda^{i}L_{A^{\wedge}_p/\bz_p}\right)^{\wedge}_p$$
is an equivalence, see the proof of Lemma \ref{lem-complete-HH}. Hence (\ref{oneequi}) is an equivalence.

Now we define an equivalence
$$\left(\mathrm{Fil}_{B}^*HP(-/\bz_p)\right)^{\wedge}_p \stackrel{\sim}{\rightarrow} \mathrm{Fil}_{BMS}^*HP(-/\bz_p,\bz_p)$$
of $\widehat{DF}(\bz_p)$-valued sheaves on $\mathrm{QSyn}^{\mathrm{op}}_{\bz_p}$. First we notice that $\left(\mathrm{Fil}_{B}^*HP(-/\bz_p)\right)^{\wedge}_p$ is indeed a sheaf, since $\left(\mathrm{gr}_{B}^{i}HP(-/\bz_p)\right)^{\wedge}_p\simeq \left(L\widehat{\Omega}_{-/\bz}[2i]\right)^{\wedge}_p$ is a sheaf on $\mathrm{QSyn}^{\mathrm{op}}_{\bz_p}$, see \cite[Example 5.11]{BMS}. Moreover $\mathrm{Fil}_{BMS}^*HP(-/\bz_p,\bz_p)$ is a sheaf on $\mathrm{QSyn}^{\mathrm{op}}_{\bz_p}$ by definition. In view of  \cite[Proposition 4.31]{BMS}, it is therefore enough to define an equivalence
$$\left(\mathrm{Fil}_{B}^*HP(R/\bz_p)\right)^{\wedge}_p \stackrel{\sim}{\rightarrow} \tau_{\geq 2*} HP(R/\bz_p,\bz_p) \simeq \mathrm{Fil}_{BMS}^*HP(R/\bz_p,\bz_p)$$
functorial in $R\in \mathrm{QRSPerfd}_{\bz_p}$. We need to check that $\left(\mathrm{Fil}_{B}^{i}HP(R/\bz_p)\right)^{\wedge}_p$ is concentrated in homological degree $\geq 2i$. Since $R\in \mathrm{QRSPerfd}_{\bz_p}$, $\left(L\Lambda^{i}L_{R/\bz_p}\right)^{\wedge}_p[-i]$ is concentrated in degree $0$ by \cite[Lemma 5.14(1)]{BMS}. It follows by induction that $(L\Omega^{<n}_{R/\bz_p})^{\wedge}_p$ is concentrated in degree $0$ and that the transition morphisms are surjective on $H^0$, so that the derived limit $$(L\widehat{\Omega}_{R/\bz_p})^{\wedge}_p\simeq R\mathrm{lim}((L\Omega_{R/\bz_p}^{<n})^{\wedge}_p)$$ is concentrated in degree $0$ as well. Hence $\left(\mathrm{Fil}_{B}^{*}HP(R/\bz_p)\right)^{\wedge}_p$ is a complete filtration with $j$-graded piece 
$$\left(\mathrm{gr}_{B}^{j}HP(R/\bz_p)\right)^{\wedge}_p\simeq (L\widehat{\Omega}_{R/\bz_p})^{\wedge}_p[2j]$$
concentrated in homological degree $2j$. By induction and completeness of the filtration, it follows that $\left(\mathrm{Fil}_{B}^{i}HP(R/\bz_p)\right)^{\wedge}_p$ is indeed concentrated in homological degree $\geq 2i$. Hence the natural map $\left(\mathrm{Fil}_{B}^{i}HP(R/\bz_p)\right)^{\wedge}_p\rightarrow HP(R/\bz_p,\bz_p)$ factors through
$$\left(\mathrm{Fil}_{B}^{i}HP(R/\bz_p)\right)^{\wedge}_p\rightarrow \tau_{\geq 2i} HP(R/\bz_p,\bz_p)$$
and we get a morphism of complete multiplicative filtrations 
$$\left(\mathrm{Fil}_{B}^*HP(R/\bz_p)\right)^{\wedge}_p \rightarrow\tau_{\geq 2*} HP(R/\bz_p,\bz_p) \simeq \mathrm{Fil}_{BMS}^*HP(R/\bz_p,\bz_p).$$
This map is an equivalence since the induced map
$$\left(L\widehat{\Omega}_{R/\bz_p}[2i]\right)^{\wedge}_p\simeq \left(\mathrm{gr}_{B}^{i}HP(R/\bz_p)\right)^{\wedge}_p \rightarrow \mathrm{gr}_{BMS}^{i}HP(R/\bz_p,\bz_p)\simeq \left(L\widehat{\Omega}_{R/\bz_p}[2i]\right)^{\wedge}_p$$
is an equivalence for any $i\in \bz$ (see \cite[Theorem 1.17]{BMS} and Section \ref{sect-BMS}).

\end{proof}

\begin{prop}\label{mapBMS-T-H}
Let $A\in\mathrm{QSyn}_{\bz_p}$. There is a functorial morphism of multiplicative $\bt$-equivariant filtrations
$$\mathrm{Fil}_{BMS}^*THH(A,\bz_p)\rightarrow \mathrm{Fil}_{BMS}^*HH(A/\bz_p,\bz_p)$$
inducing the canonical map $THH(A,\bz_p) \rightarrow HH(A/\bz_p,\bz_p)$, and similarly for $TP$ and $TC^-$.
\end{prop}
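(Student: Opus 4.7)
The plan is to construct the map on the subcategory $\mathrm{QRSPerfd}_{\bz_p}$ first, where the BMS filtration is simply the double-speed Postnikov filtration $\tau_{\geq 2*}$, and then extend to all of $\mathrm{QSyn}_{\bz_p}$ by the unfolding equivalence of \cite[Proposition 4.31]{BMS}. In more detail, recall from Section \ref{sect-BMS} that for $?=HH,P,C^-$ the filtration $\mathrm{Fil}^*_{BMS}$ on the QRSPerfd side is defined as $\tau_{\geq 2*}$, and on $\mathrm{QSyn}_{\bz_p}$ it is obtained by applying the unfolding functor $R\Gamma_{\mathrm{syn}}(-,-)$ to these sheaves; the same holds for $\mathrm{Fil}^*_{BMS}T?(-,\bz_p)$.

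First I would observe that the canonical map $THH(A,\bz_p)\rightarrow HH(A/\bz_p,\bz_p)$, obtained from the equivalence (\ref{computeTHH2}) applied $p$-completely (i.e.\ from base change along $THH(\bz_p)\to \bz_p$, itself induced by $\bs \to H\bz$), is naturally a $\bt$-equivariant morphism of $\mathbb{E}_{\infty}$-ring spectra, functorial in $A$. Applying the lax symmetric monoidal and $\bt$-equivariant functor $\tau_{\geq 2*}$ produces a morphism of complete multiplicative $\bt$-equivariant filtrations
$$\tau_{\geq 2*}THH(S,\bz_p)\longrightarrow \tau_{\geq 2*}HH(S/\bz_p,\bz_p)$$
for every $S\in \mathrm{QRSPerfd}_{\bz_p}$, depending functorially on $S$. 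For $TP$ and $TC^-$, I would compose with the lax symmetric monoidal functors $(-)^{t\bt}$ and $(-)^{h\bt}$ (cf.\ \cite[Corollary I.4.3]{Nikolaus-Scholze18}) applied to the above $\bt$-equivariant map, and then again apply $\tau_{\geq 2*}$; this yields morphisms of complete multiplicative filtrations on QRSPerfd for $?=P,C^-$ as well.

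Next I would invoke the unfolding equivalence. The source and target presheaves (for each $?$) are sheaves of $\mathbb{E}_{\infty}$-algebras in $\widehat{DF}(\bs[\bt])$ or $\widehat{DF}(\bs)$ on $\mathrm{QRSPerfd}^{\mathrm{op}}_{\bz_p}$ by \cite[Section 7]{BMS}. Since $R\Gamma_{\mathrm{syn}}(-,-)$ is an equivalence of $\infty$-categories of $\mathcal{C}$-valued sheaves (taking $\mathcal{C}$ to be $\mathrm{CAlg}$ of the relevant filtered $\infty$-category), it sends the constructed map of sheaves of $\mathbb{E}_{\infty}$-algebras on $\mathrm{QRSPerfd}_{\bz_p}$ to a map of the same type on $\mathrm{QSyn}_{\bz_p}$, giving exactly
$$\mathrm{Fil}_{BMS}^*T?(A,\bz_p)\longrightarrow \mathrm{Fil}_{BMS}^*H?(A/\bz_p,\bz_p)$$
for $?=HH,P,C^-$. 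That the induced map on the underlying spectra (obtained by taking $\mathrm{colim}$ of the filtration) recovers the canonical map $T?(A,\bz_p)\to H?(A/\bz_p,\bz_p)$ follows because on $\mathrm{QRSPerfd}_{\bz_p}$ this holds by construction and both sides satisfy descent.

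The main thing to be careful about is the coherence of the symmetric monoidal and $\bt$-equivariant structures throughout, in particular checking that the unfolding functor respects the $\mathbb{E}_{\infty}$-algebra structure and, in the $THH$ case, the circle action; this is where all the conditions for the proposition really need the full strength of the sheaf-theoretic and monoidal apparatus recalled in Section \ref{sect-BMS}. Once one grants the unfolding equivalence at the level of sheaves of $\mathbb{E}_{\infty}$-algebras in $\widehat{DF}(\bs[\bt])$ (resp.\ $\widehat{DF}(\bs)$), the construction is essentially formal.
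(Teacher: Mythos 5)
Your proposal is correct and follows essentially the same route as the paper: construct the morphism on $\mathrm{QRSPerfd}_{\bz_p}$ by applying $\tau_{\geq 2*}$ to the canonical ($\bt$-equivariant, $\mathbb{E}_{\infty}$) map $T?(-,\bz_p)\rightarrow H?(-/\bz_p,\bz_p)$, viewed as a map of sheaves of $\mathbb{E}_{\infty}$-algebras in $\widehat{DF}(\bs[\bt])$ (resp. $\widehat{DF}(\bs)$), and then apply the unfolding functor $R\Gamma_{\mathrm{syn}}(A,-)$, which by definition yields the BMS filtrations on $\mathrm{QSyn}_{\bz_p}$. The only cosmetic difference is at the last step: the paper identifies the induced map on underlying objects with the canonical one by taking colimits and invoking exhaustiveness of the BMS filtrations, which is a cleaner way to phrase your descent remark.
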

\begin{proof}
The map $THH(-,\bz_p)\rightarrow HH(-/\bz_p,\bz_p)$ induces a morphism
$$\tau_{\geq 2*}THH(-,\bz_p)\rightarrow \tau_{\geq 2*}HH(-/\bz_p,\bz_p)$$
of $\widehat{DF}(\bs[\bt])$-valued sheaves on $\mathrm{QRSPerfd}_{\bz_p}^{\mathrm{op}}$ (more precisely, of sheaves of $\mathbb{E}_{\infty}$-algebras in $\widehat{DF}(\bs[\bt])$). We obtain
$$R\Gamma_{\mathrm{syn}}(A,\tau_{\geq 2*}THH(-,\bz_p))\rightarrow R\Gamma_{\mathrm{syn}}(A,\tau_{\geq 2*}HH(-/\bz_p,\bz_p)),$$
which is the desired morphism by definition of the BMS-filtration. Taking the colimit of both sides, we obtain the canonical map $THH(A,\bz_p)\rightarrow HH(A/\bz_p,\bz_p)$, since the BMS-filtration on both $THH(A,\bz_p)$ and $HH(A/\bz_p,\bz_p)$ is exhaustive. 
The proof for $TP$ and $TC^-$ is the same. 

\end{proof}

\subsection{Definition of the motivic filtration}
\begin{defn} Let $A$ be a quasi-lci ring with bounded torsion.  We define $F^*THH(A)$ as the fiber product
\[ \xymatrix{
F^*THH(A)\ar[d]\ar[r]& \mathrm{Fil}_{HKR}^*HH(A) \ar[d]\\
\prod_p\mathrm{Fil}_{BMS}^*THH(A^{\wedge}_p,\bz_p)\ar[r]&\prod_p\mathrm{Fil}_{BMS}^*HH(A^{\wedge}_p/\bz_p,\bz_p) 
}
\]
of $\mathbb{E}_{\infty}$-algebra objects in the symmetric monoidal $\infty$-category $\widehat{DF}(\bs[\bt])$.
\end{defn}

\begin{prop}\label{pcomplTHH}
If $A$ is a quasi-lci ring with bounded torsion, then $F^*THH(A)$ is a  functorial, $\bt$-equivariant, $\bn^{\mathrm{op}}$-indexed, multiplicative, complete, exhaustive filtration on $THH(A)$ endowed with an equivalence
$$(F^*THH(A))^{\wedge}_p\stackrel{\sim}{\longrightarrow} \mathrm{Fil}_{BMS}^*THH(A^{\wedge}_p,\bz_p)$$
for every prime number $p$. 
\end{prop}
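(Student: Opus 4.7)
The plan is to read the stated properties directly off the defining pullback square. Functoriality in $A$, the multiplicative and $\bt$-equivariant structure, and completeness are all automatic, since the pullback is formed in the stable symmetric monoidal $\infty$-category $\mathrm{CAlg}(\widehat{DF}(\bs[\bt]))$ along the maps provided by Propositions \ref{mapfiltHKR} and \ref{mapBMS-T-H}, and each of the four inputs already has these properties. For the $\bn^{\mathrm{op}}$-indexing, I will use that $\mathrm{gr}^n$ is exact and hence commutes with the pullback, while each of the four corners has vanishing graded pieces in degrees $n<0$.

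For exhaustivity, the $\bn^{\mathrm{op}}$-indexing reduces $\mathrm{colim}\,F^*THH(A)$ to $F^0 THH(A)$. Evaluating the defining pullback at filtration degree zero, and using exhaustivity of the $HKR$ and $BMS$ filtrations, gives $F^0 THH(A) \simeq HH(A) \times_{\prod_p HH(A^{\wedge}_p/\bz_p,\bz_p)} \prod_p THH(A^{\wedge}_p,\bz_p)$. Lemmas \ref{lem-complete-HH} and \ref{lem-complete-THH} identify $HH(A^{\wedge}_p/\bz_p,\bz_p)$ with $HH(A,\bz_p)$ and $THH(A^{\wedge}_p,\bz_p)$ with $THH(A,\bz_p)$, so this is exactly the pullback of Proposition \ref{decompoTHH}, which is equivalent to $THH(A)$; this simultaneously identifies the underlying object of the filtration with $THH(A)$.

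For the $p$-adic completion, I apply $(-)^{\wedge}_p$ to the defining pullback. Since $p$-completion is a Bousfield localization on a stable $\infty$-category, it is exact and thus preserves finite limits, so the $p$-completed square is still cartesian. Proposition \ref{mapfiltHKR} identifies $(\mathrm{Fil}_{HKR}^*HH(A))^{\wedge}_p$ with $\mathrm{Fil}_{BMS}^*HH(A^{\wedge}_p/\bz_p,\bz_p)$. The main remaining step is to show that $p$-completion collapses the products to their $p$-th factors, namely $(\prod_q \mathrm{Fil}_{BMS}^*T?(A^{\wedge}_q,\bz_q))^{\wedge}_p \simeq \mathrm{Fil}_{BMS}^*T?(A^{\wedge}_p,\bz_p)$, and analogously for $HH$. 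This uses that each factor with $q\neq p$ is $q$-complete and hence annihilated by $p$-completion: since $\bs/p$ is a dualizable spectrum, tensoring with $\bs/p$ commutes with the tower defining $q$-completion, and each term $\bs/p \otimes \bs/q^n$ vanishes because $p$ acts invertibly on $\bz/q^n$. Once these identifications are in place, the right vertical map of the $p$-completed pullback becomes an equivalence, so the pullback is equivalent to its bottom-left corner $\mathrm{Fil}_{BMS}^*THH(A^{\wedge}_p,\bz_p)$.

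The main obstacle in this plan is the interaction of $p$-completion with the countable product in the bottom row: $p$-completion is a left adjoint and so does not obviously preserve limits, and the argument must exploit the $q$-completeness of each factor together with the dualizability of $\bs/p$ to reduce a $q$-indexed product to its single $p$-th factor. By contrast, commutation of $p$-completion with the finite pullback is formal once one invokes exactness of the localization.
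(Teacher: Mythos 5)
Your proof is correct and follows essentially the same route as the paper: the structural properties are read off the defining pullback, exhaustivity is reduced via the $\bn^{\mathrm{op}}$-indexing to filtration degree $0$ and Proposition \ref{decompoTHH}, and the $p$-completed square is identified with the BMS filtration using Proposition \ref{mapfiltHKR}. The only point you elaborate beyond the paper is the collapse of $\prod_q$ to the $q=p$ factor under $p$-completion, which the paper leaves implicit; your dualizability argument is the standard one (with the minor slip that one should say $p$ acts invertibly on $\bs/q^{n}$ because its homotopy groups are $q$-power torsion, rather than on $\bz/q^{n}$).
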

\begin{proof}
The fact that $F^*THH(A)$ is $\bn^{\mathrm{op}}$-indexed and exhaustive follows from Proposition \ref{decompoTHH}, since the HKR filtration on $HH(A)$, the BMS filtration on $HH(A^{\wedge}_p/\bz_p,\bz_p)$, and the BMS filtration on $THH(A^{\wedge}_p/\bz_p,\bz_p)$ are all $\bn^{\mathrm{op}}$-indexed and exhaustive. By definition $F^*THH(A)$ is an $\mathbb{E}_{\infty}$-algebra object in the symmetric monoidal $\infty$-category $\widehat{DF}(\bs[\bt])$, hence it is complete, multiplicative and $\bt$-equivariant. Applying $(-)^{\wedge}_p$ to the defining cartesian square of $F^*THH(A)$ we obtain a cartesian square
\[ \xymatrix{
(F^*THH(A))^{\wedge}_p\ar[d]\ar[r]& (\mathrm{Fil}_{HKR}^*HH(A))^{\wedge}_p \ar[d]^{\simeq}\\
\mathrm{Fil}_{BMS}^*THH(A^{\wedge}_p,\bz_p)\ar[r]&\mathrm{Fil}_{BMS}^*HH(A^{\wedge}_p/\bz_p,\bz_p) 
}
\]
where the right vertical map is an equivalence by Proposition \ref{mapfiltHKR}. It follows that the left vertical map is an equivalence as well.

\end{proof}

\begin{defn} Let $A$ be a quasi-lci ring with bounded torsion.  For $?=P,C^-$
we define  $F^*T?(A)$  as the fiber product
\[ \xymatrix{
F^*T?(A)\ar[d]\ar[r]& \mathrm{Fil}_{B}^*H?(A) \ar[d]\\
\prod_p\mathrm{Fil}_{BMS}^*T?(A^{\wedge}_p,\bz_p)\ar[r]&\prod_p\mathrm{Fil}_{BMS}^*H?(A^{\wedge}_p/\bz_p,\bz_p) 
}
\]
of $\mathbb{E}_{\infty}$-algebra objects in the symmetric monoidal $\infty$-category $\widehat{DF}(\bs)$.

There is a morphism $F^*TC^-(A)\rightarrow F^*TP(A)$ of $\mathbb{E}_{\infty}$-algebra objects, and we define $F^*\Sigma^2TC^+(A)\in \widehat{DF}(\bs)$ as the cofiber
$$F^*\Sigma^2TC^+(A):=\mathrm{Cofib}(F^*TC^-(A)\rightarrow F^*TP(A))$$
computed in the stable category $\widehat{DF}(\bs)$.
\end{defn}

\begin{lem}\label{lemnice}
 Let $A$ be a quasi-lci ring with bounded torsion.  For $?=P,C^-$, the canonical maps
$$\mathrm{colim}\, \prod_p\mathrm{Fil}_{BMS}^*T?(A^{\wedge}_p,\bz_p)\rightarrow \prod_pT?(A^{\wedge}_p,\bz_p)$$
$$\mathrm{colim}\, \prod_p\mathrm{Fil}_{BMS}^*H?(A^{\wedge}_p,\bz_p)\rightarrow \prod_pH?(A^{\wedge}_p,\bz_p)$$
are equivalences.
\end{lem}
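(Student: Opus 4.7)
The plan is to compute the cofiber of the canonical map of the Lemma and show that it vanishes by a uniform-in-$p$ connectivity argument. Set
\[
C^n_p:=\mathrm{cofib}\bigl(\mathrm{Fil}^n_{BMS}T?(A^{\wedge}_p,\bz_p)\rightarrow T?(A^{\wedge}_p,\bz_p)\bigr),
\]
and analogously $D^n_p$ for the $H?$-version. Since filtered colimits commute with finite colimits in the stable $\infty$-category of spectra, and since infinite products commute with cofibers (they commute with limits, and cofibers are fibers shifted by one), the cofibers of the two maps of the Lemma identify respectively with $\mathrm{colim}_n\prod_p C^n_p$ and $\mathrm{colim}_n\prod_p D^n_p$. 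It is therefore enough to show that each of these colimits vanishes.

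The key input is the uniform connectivity of the graded pieces of the BMS-filtration. By Section \ref{sect-BMS}, for any prime $p$ we have canonical equivalences
\[
\mathrm{gr}^m_{BMS}TP(A^{\wedge}_p,\bz_p)\simeq \widehat{\Prism}_{A^{\wedge}_p}\{m\}[2m], \qquad \mathrm{gr}^m_{BMS}HP(A^{\wedge}_p/\bz_p,\bz_p)\simeq (L\widehat{\Omega}_{A^{\wedge}_p/\bz_p})^{\wedge}_p[2m],
\]
together with Nygaard- (respectively Hodge-) filtered subobjects for $?=C^-$. Since $\widehat{\Prism}_R$ and $(L\widehat{\Omega}_{R/\bz_p})^{\wedge}_p$ are concentrated in degree $0$ for $R\in \mathrm{QRSPerfd}_{\bz_p}$ (as used in the proofs of Proposition \ref{mapfiltHKR} and Proposition \ref{prop-FilBpcompleteed}), quasisyntomic descent places them in non-negative cohomological degrees on all of $\mathrm{QSyn}_{\bz_p}$. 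Consequently $\mathrm{gr}^m_{BMS}T?(A^{\wedge}_p,\bz_p)$ and $\mathrm{gr}^m_{BMS}H?(A^{\wedge}_p/\bz_p,\bz_p)$ lie in homotopy degrees $\leq 2m$ with a bound independent of $p$.

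From this I would deduce that $C^n_p$ (and $D^n_p$) lies in homotopy degrees $\leq 2(n-1)$ uniformly in $p$. Indeed, for any $k\geq 1$ the finite quotient $\mathrm{Fil}^{n-k}_{BMS}/\mathrm{Fil}^n_{BMS}$ is a successive extension of the graded pieces $\mathrm{gr}^{n-k},\ldots,\mathrm{gr}^{n-1}$, each in homotopy degrees $\leq 2(n-1)$, so the same bound holds on the extension. Exhaustivity of the BMS-filtration gives $C^n_p\simeq\mathrm{colim}_k\,\mathrm{Fil}^{n-k}_{BMS}/\mathrm{Fil}^n_{BMS}$, and the connectivity bound persists under filtered colimits. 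Exactness of infinite products of spectra ($\pi_k\prod_p X_p\simeq\prod_p\pi_k X_p$) then shows $\prod_p C^n_p$ is still in homotopy degrees $\leq 2(n-1)$.

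To conclude, one shows vanishing degree by degree: for any fixed $k\in\bz$ and any $n\leq (k+1)/2$ we have $\pi_k\prod_p C^n_p=\prod_p\pi_k C^n_p=0$ by the above bound, so the cofinal subsequence as $n\to -\infty$ is identically zero. Filtered colimits commute with $\pi_k$, hence $\pi_k\mathrm{colim}_n\prod_p C^n_p=0$ for every $k$ and the colimit vanishes as a spectrum; the same argument applies verbatim to $D^n_p$. The main obstacle is the uniform-in-$p$ connectivity bound on the graded pieces, as everything that follows is formal manipulation of connectivity in stable $\infty$-categories; this in turn rests on the description of $\widehat{\Prism}_{A^{\wedge}_p}\{m\}$ and $(L\widehat{\Omega}_{A^{\wedge}_p/\bz_p})^{\wedge}_p$ as non-negatively concentrated complexes, which is quasisyntomic descent from the degree-$0$ statement on $\mathrm{QRSPerfd}_{\bz_p}$.
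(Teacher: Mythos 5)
Your proof is correct and takes essentially the same route as the paper's: a coconnectivity bound on the cofiber $T?(A^{\wedge}_p,\bz_p)/\mathrm{Fil}^n_{BMS}$ that is uniform in $p$ (ultimately from degree-$0$/evenness concentration on quasiregular semiperfectoid rings, preserved by the descent limit and by the product over $p$), followed by the observation that $\pi_k$ commutes with filtered colimits so each homotopy group stabilizes as $n\to-\infty$. The only cosmetic difference is that the paper gets the bound directly from $T?(R,\bz_p)/\mathrm{Fil}^n_{BMS}\simeq\tau_{<2n}T?(R,\bz_p)$ for $R\in\mathrm{QRSPerfd}_{\bz_p}$ and then applies $R\Gamma_{\mathrm{syn}}$, whereas you deduce it from coconnectivity of the graded pieces together with exhaustivity of the single-prime BMS filtration.
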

\begin{proof}We prove the result for $TP$; the same argument works for the other cases. We have $$TP(R,\bz_p)/\mathrm{Fil}^n_{BMS}\simeq \tau_{<2n} TP(R,\bz_p)\in\mathrm{Sp}_{<2n}$$
for any  $R\in \mathrm{QRSPerfd}_{\bz_p}$, hence 
$$\prod_pTP(A^{\wedge}_p,\bz_p)/\mathrm{Fil}^n_{BMS}\simeq \prod_p R\Gamma_{\mathrm{syn}}(A^{\wedge}_p, \tau_{<2n} TP(-,\bz_p)) \in\mathrm{Sp}_{<2n},$$
since $\mathrm{Sp}_{<2n}$ is closed under limits in $\mathrm{Sp}$, see \cite[Corollary 1.2.1.6]{LurieHA}. The map
$$\pi_i(\prod_p \mathrm{Fil}^n_{BMS}TP(A^{\wedge}_p,\bz_p))\rightarrow \pi_i(\prod_p TP(A^{\wedge}_p,\bz_p))$$
is therefore an isomorphism for any $i\geq 2n$, hence the left group is eventually constant when $i\in\bz$ is fixed and $n$ tends to $-\infty$. The result follows since $\pi_i:\mathrm{Sp}\rightarrow \mathrm{Ab}$ commutes with filtered colimits.
\end{proof}

\begin{prop}\label{propmotfiltTP}
Let $A$ be a quasi-lci ring with bounded torsion. 
\begin{enumerate}
\item $F^*TP(A)$ and $F^*TC^-(A)$  are  functorial, multiplicative, complete, exhaustive filtrations on $TP(A)$ and $TC^-(A)$ respectively. 
\item There are canonical equivalences
$$(F^*TP(A))^{\wedge}_p\stackrel{\sim}{\longrightarrow} \mathrm{Fil}_{BMS}^*TP(A^{\wedge}_p,\bz_p)$$
and
$$(F^*TC^-(A))^{\wedge}_p\stackrel{\sim}{\longrightarrow} \mathrm{Fil}_{BMS}^*TC^-(A^{\wedge}_p,\bz_p)$$
for every prime number $p$. 
\item  $F^*\Sigma^2TC^+(A)$  is a  functorial, $\bn_{>0}^{\mathrm{op}}$-indexed, complete, exhaustive filtration on $\Sigma^2TC^+(A)$, and there is an equivalence
$$(F^*\Sigma^2TC^+(A))^{\wedge}_p\stackrel{\sim}{\longrightarrow} \mathrm{Fil}_{BMS}^*\Sigma^2TC^+(A^{\wedge}_p,\bz_p)$$
for every prime number $p$. 
\item For any $n\in\mathbb{Z}$, 
 $\mathrm{gr}^{n}_FTP(A)$, $\mathrm{gr}^{n}_FTC^-(A)$, and $\mathrm{gr}^{n}_F\Sigma^2TC^+(A)$ are $H\bz$-modules and there is a cofiber sequence of $H\bz$-modules 
$$\mathrm{gr}^{n}_FTC^-(A)\rightarrow \mathrm{gr}^{n}_FTP(A)\rightarrow \mathrm{gr}^{n}_F\Sigma^2TC^+(A).$$

\item For any $n\geq 0$, we have a cartesian square of $H\bz$-modules
\[ \xymatrix{
\mathrm{gr}_F^n \Sigma^2TC^+(A)\ar[d]\ar[r]& L\Omega^{<n}_{A/\bz}[2n] \ar[d]\\
\prod_p\widehat{\Prism}_{A^{\wedge}_p}\{n\}/\mathcal{N}^{\geq n}\widehat{\Prism}_{A^{\wedge}_p}\{n\}[2n]\ar[r]&\prod_p (L\Omega^{<n}_{A^{\wedge}_p/\bz_p})^{\wedge}_p[2n] 
}
\] 
where $\widehat{\Prism}_{A^{\wedge}_p}\{n\}/\mathcal{N}^{\geq n}\widehat{\Prism}_{A^{\wedge}_p}\{n\}$ is defined in \cite{BMS}.
\end{enumerate}
\end{prop}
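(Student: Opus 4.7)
The plan is to mirror the proof of Proposition \ref{pcomplTHH} for each of the five assertions, using Proposition \ref{propfibproduct}, Proposition \ref{prop-FilBpcompleteed}, and Lemma \ref{lemnice} as the main inputs.

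For (1) and (2), the filtration $F^*T?(A)$ is by construction an $\mathbb{E}_{\infty}$-algebra object in $\widehat{DF}(\bs)$, so it is automatically functorial, multiplicative, and complete. Applying $(-)^{\wedge}_p$ to the defining pullback square yields a cartesian square in $\widehat{DF}(\bs)^{\mathrm{Cpl}(p)}$; the right-hand vertical map becomes an equivalence by Proposition \ref{prop-FilBpcompleteed}, and each product $\prod_q \mathrm{Fil}_{BMS}^*T?(A^{\wedge}_q, \bz_q)$ and $\prod_q \mathrm{Fil}_{BMS}^*H?(A^{\wedge}_q/\bz_q, \bz_q)$ collapses under $p$-completion to its $p$-th factor, because the other factors are $q$-complete with $q \neq p$ (so $p$ is invertible on them) and are hence annihilated by $(-)^{\wedge}_p$, while products commute with $(-)\otimes^L\bz/p^{\nu}$ through the cofiber-of-$p^{\nu}$ description. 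It follows that $(F^*T?(A))^{\wedge}_p \simeq \mathrm{Fil}_{BMS}^*T?(A^{\wedge}_p,\bz_p)$. For exhaustiveness, filtered colimits commute with finite limits in spectra, so $\mathrm{colim}\, F^*T?(A)$ is the fiber product of the colimits of the four corners, which are respectively $H?(A)$ (by exhaustiveness of $\mathrm{Fil}_B^*$) and $\prod_p H?(A^{\wedge}_p, \bz_p)$, $\prod_p T?(A^{\wedge}_p, \bz_p)$ (by Lemma \ref{lemnice}); Proposition \ref{propfibproduct} then identifies this fiber product with $T?(A)$.

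For (3), the cofiber $F^*\Sigma^2TC^+(A)$ is complete by construction as a cofiber in $\widehat{DF}(\bs)$, and its $p$-completion equivalence follows from (2) together with exactness of $(-)^{\wedge}_p$. To see that the filtration is $\bn_{>0}^{\mathrm{op}}$-indexed, I would verify that for $n \leq 0$ the map $\mathrm{gr}^n_F TC^-(A) \to \mathrm{gr}^n_F TP(A)$ is an equivalence on each of the four corners of the defining cartesian square: in the Antieau column, $L\widehat{\Omega}^{\geq n}_{A/\bz} \to L\widehat{\Omega}_{A/\bz}$ is an equivalence for $n \leq 0$ (the cofiber $L\Omega^{<n}$ vanishes), and in the BMS columns $\mathcal{N}^{\geq n}\widehat{\Prism}_A\{n\} = \widehat{\Prism}_A\{n\}$ for $n \leq 0$. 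Exhaustiveness of $F^*\Sigma^2TC^+(A)$ then follows from exhaustiveness of the two filtrations of which it is the cofiber, since colimits commute with cofibers.

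For (4) and (5), the $H\bz$-module structure on graded pieces comes from multiplicativity of $F^*$ together with the map $F^*TC^-(\bz) \to F^*TC^-(A)$, using the fact (established earlier in the paper, and stated in the introduction) that $\mathrm{gr}^0_F TC^-(\bz)$ is an $\mathbb{E}_{\infty}$-$\bz$-algebra. The cofiber sequence of (4) is then obtained by applying the exact functor $\mathrm{gr}^n$ to the defining cofiber sequence of $F^*\Sigma^2TC^+(A)$. For (5), applying $\mathrm{gr}^n$ to the cartesian squares defining $F^*TC^-(A)$ and $F^*TP(A)$ yields cartesian squares of $H\bz$-modules (since $\mathrm{gr}^n$ is exact); taking the cofiber of the resulting map of cartesian squares in the stable $\infty$-category of $H\bz$-modules gives the asserted square, with corners identified via Antieau's formulae $\mathrm{gr}^n_B HC^-(A) \simeq L\widehat{\Omega}^{\geq n}_{A/\bz}[2n]$, $\mathrm{gr}^n_B HP(A) \simeq L\widehat{\Omega}_{A/\bz}[2n]$ and the analogous BMS identifications recalled in Section \ref{sect-BMS}. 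The main obstacle lies in (2): carefully handling the collapse of infinite products under $p$-completion and the exhaustiveness argument resting on Lemma \ref{lemnice}; everything else follows cleanly from exactness of $\mathrm{gr}^n$ and the already established identifications.
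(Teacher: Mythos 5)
Parts (1), (2), (3) and (5) of your proposal follow essentially the same route as the paper: the same use of Proposition \ref{prop-FilBpcompleteed} after $p$-completion (your explicit collapse of the product $\prod_q$ under $(-)^{\wedge}_p$ is correct and only left implicit in the paper), the same appeal to Lemma \ref{lemnice} and Proposition \ref{propfibproduct} for exhaustiveness (the paper takes colimits of the defining square rather than invoking commutation of filtered colimits with finite limits, but this is the same argument), and the same vanishing of $\mathrm{gr}^n$ for $n\leq 0$ on all four corners to get the $\bn_{>0}^{\mathrm{op}}$-indexing; your exhaustiveness argument in (3) via commuting the colimit with the cofiber is in fact slightly more direct than the paper's, which routes through auxiliary cofiber filtrations and the third square of Proposition \ref{propfibproduct}.

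The genuine gap is in (4). You derive the $H\bz$-module structure on the graded pieces from the assertion that $\mathrm{gr}^0_FTC^-(\bz)$ is an $\mathbb{E}_{\infty}$-$\bz$-algebra, citing this as ``established earlier in the paper, and stated in the introduction.'' But nothing before Proposition \ref{propmotfiltTP} proves this: the introduction merely announces it, and the place where it is actually shown is precisely the proof of part (4). So your argument for (4) is circular at its key step, and this step carries real content. The paper proves it by writing $\mathrm{gr}_F^0TC^-(\bz)$ as the pullback of $\mathrm{gr}_B^0HC^-(\bz)\simeq\bz$ and $\prod_p\mathrm{gr}_{BMS}^0TC^-(\bz_p,\bz_p)$ over $\prod_p\mathrm{gr}_{BMS}^0HC^-(\bz_p,\bz_p)\simeq\prod_p\bz_p$, then checking that $\mathrm{gr}_{BMS}^0TC^-(\bz_p,\bz_p)$ is an $\mathbb{E}_{\infty}$-$\bz_p$-algebra and that the map to $\mathrm{gr}_{BMS}^0HC^-(\bz_p,\bz_p)$ is a map of $\mathbb{E}_{\infty}$-$\bz_p$-algebras; this last point is not formal and uses the description of both sides as $R\Gamma_{\mathrm{syn}}(\bz_p,\pi_0T C^-(-,\bz_p))$ and $R\Gamma_{\mathrm{syn}}(\bz_p,\pi_0 HC^-(-,\bz_p))$ together with the symmetric monoidality of the unfolding equivalence (\ref{unfolding}). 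From this one builds the commutative square producing the $\mathbb{E}_{\infty}$-map $H\bz\rightarrow\mathrm{gr}_F^0TC^-(\bz)$, and only then does the module structure on all $\mathrm{gr}^n_F$ (and hence the ``square of $H\bz$-modules'' you use in (5)) follow by the multiplicativity argument you describe. Your proposal needs to supply this construction rather than quote it.
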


\begin{proof}
(1) We set $?=P,C^-$. The filtration  $F^*T?(A)$ is complete, multiplicative and functorial by definition. It remains to see that it is exhaustive. Applying $\mathrm{colim}$ to the cartesian square of filtrations defining $F^*T?(A)$, we obtain the commutative diagram
\[ \xymatrix{
\mathrm{colim}\,F^*TP(A)\ar[d]\ar[r]& \mathrm{colim}\,\mathrm{Fil}_{B}^*HP(A) \ar[d]\\
\mathrm{colim}\, \prod_p\mathrm{Fil}_{BMS}^*TP(A^{\wedge}_p,\bz_p)\ar[r]\ar[d]^{\simeq}&\mathrm{colim}\, \prod_p\mathrm{Fil}_{BMS}^* HP(A^{\wedge}_p/\bz_p,\bz_p)\ar[d]^{\simeq} \\
 \prod_p TP(A^{\wedge}_p,\bz_p)\ar[r]& \prod_p  HP(A^{\wedge}_p/\bz_p,\bz_p)
}
\]
where the lower vertical maps are equivalences by Lemma \ref{lemnice}. The upper square is obviously cocartesian, hence cartesian. Moreover, 
the map $\mathrm{colim}\,\mathrm{Fil}_{B}^*HP(A)\rightarrow HP(A)$ is an equivalence by \cite{Antieau18} since $A/\bz$ is quasi-lci.  It then follows from  Proposition \ref{propfibproduct} that the canonical map
$$\mathrm{colim}\,F^*TP(A)\rightarrow TP(A)$$
is an equivalence.

(2) Applying $(-)^{\wedge}_p$ to the defining cartesian square for $F^*TP(A)$ we obtain a cartesian square of spectra
\[ \xymatrix{
(F^*TP(A))^{\wedge}_p\ar[d]\ar[r]& (\mathrm{Fil}_{B}^*HP(A))^{\wedge}_p \ar[d]^{\simeq}\\
\mathrm{Fil}_{BMS}^*TP(A^{\wedge}_p,\bz_p)\ar[r]&\mathrm{Fil}_{BMS}^*HP(A^{\wedge}_p/\bz_p,\bz_p) 
}
\]
where the right vertical map is an equivalence by Proposition \ref{prop-FilBpcompleteed}. It follows that the left vertical map is an equivalence as well. The equivalence
$$(F^*TC^-(A))^{\wedge}_p\stackrel{\sim}{\longrightarrow} \mathrm{Fil}_{BMS}^*TC^-(A^{\wedge}_p,\bz_p)$$
is obtained the same way.

(3) We consider the filtration 
$$\mathrm{Fil}_{BMS}^*\Sigma^2TC^+(A^{\wedge}_p,\bz_p):=\mathrm{Cofib}(\mathrm{Fil}_{BMS}^*TC^-(A^{\wedge}_p,\bz_p)\rightarrow \mathrm{Fil}_{BMS}^*TP(A^{\wedge}_p,\bz_p))$$
on $\Sigma^2TC^+(A^{\wedge}_p,\bz_p)$ and we define similarly filtrations $\mathrm{Fil}_{BMS}^*\Sigma^2HC(A^{\wedge}_p,\bz_p)$ and $\mathrm{Fil}_{B}^*\Sigma^2HC(A)$
on $\Sigma^2HC(A^{\wedge}_p,\bz_p)$ and $\Sigma^2HC(A)$ respectively. These three filtrations are $\bn_{>0}^{\mathrm{op}}$-indexed and exhaustive. Indeed, we have equivalences $\mathrm{gr}_B^n HP(A)\simeq L\widehat{\Omega}_{A/\bz}[2n]$, 
$\mathrm{gr}_B^n HC^-(A)\simeq L\widehat{\Omega}^{\geq n}_{A/\bz}[2n]$ and a cofiber sequence
$$\mathrm{gr}_B^n HC^-(A)\rightarrow \mathrm{gr}_B^n HP(A)\rightarrow \mathrm{gr}_B^n \Sigma^2HC(A),$$
hence $\mathrm{Fil}_{B}^*\Sigma^2HC(A)$ is $\bn_{>0}^{\mathrm{op}}$-indexed. It is exhaustive since $\mathrm{Fil}_{B}^*HC^-(A)$ and $\mathrm{Fil}_{B}^*HP(A)$ are exhaustive by \cite{Antieau18}. By \cite[Theorem 1.17]{BMS}, the same argument shows that $\mathrm{Fil}_{BMS}^*\Sigma^2HC(A^{\wedge}_p,\bz_p)$ is $\bn_{>0}^{\mathrm{op}}$-indexed and exhaustive. Similarly, the fact that $\mathrm{Fil}_{BMS}^*\Sigma^2TC^+(A^{\wedge}_p,\bz_p)$ is $\bn_{>0}^{\mathrm{op}}$-indexed and exhaustive follows from \cite[Theorem 1.12]{BMS}.

We have a cartesian square
\[ \xymatrix{
F^*\Sigma^2TC^+(A)\ar[d]\ar[r]& \mathrm{Fil}_{B}^*\Sigma^2HC(A) \ar[d]\\
\prod_p\mathrm{Fil}_{BMS}^*\Sigma^2TC^+(A^{\wedge}_p,\bz_p)\ar[r]&\prod_p\mathrm{Fil}_{BMS}^*\Sigma^2HC(A^{\wedge}_p/\bz_p,\bz_p) 
}
\] 
in the stable category $\widehat{DF}(\bs)$.  It follows that the filtration $F^*\Sigma^2TC^+(A)$ is $\bn_{>0}^{\mathrm{op}}$-indexed since the functor
$\mathrm{gr}^n:\widehat{DF}(\bs)\rightarrow \mathrm{Sp}$
is exact for any $n\in\bz$. Taking the colimit of this diagram we obtain, by the discussion above, the cartesian square
\[ \xymatrix{
F^0\Sigma^2TC^+(A)\ar[d]\ar[r]& \Sigma^2HC(A) \ar[d]\\
\prod_p\Sigma^2TC^+(A^{\wedge}_p,\bz_p)\ar[r]&\prod_p\Sigma^2HC(A^{\wedge}_p/\bz_p,\bz_p) 
}
\] 
Hence the map
$$\mathrm{colim}\,F^*\Sigma^2TC^+(A)\simeq F^0\Sigma^2TC^+(A)\rightarrow \Sigma^2TC^+(A)$$
is an equivalence by Proposition \ref{propfibproduct}.

(4) The morphism $F^*TC^-(A)\rightarrow F^*TP(A)$ is a morphism of complete multiplicative filtrations and the associated graded functor $$(\mathrm{gr}^n)_{n\in\bz}:\widehat{DF}(\bs)\rightarrow\Prod_{\bz}\mathrm{Sp}$$ is symmetric monoidal \cite{Gwilliam-Pavlov18}. It follows
 that $\mathrm{gr}_F^0TC^-(A)\rightarrow \mathrm{gr}_F^0TP(A)$ is a morphism of $\mathbb{E}_{\infty}$-algebras, $\mathrm{gr}_F^nTC^-(A)$ is a $\mathrm{gr}_F^0TC^-(A)$-module, $\mathrm{gr}_F^nTP(A)$ is a $\mathrm{gr}_F^0TP(A)$-module, and the map
 $\mathrm{gr}_F^nTC^-(A)\rightarrow \mathrm{gr}_F^nTP(A)$ is a morphism of $\mathrm{gr}_F^0TC^-(A)$-modules. In view of the cofiber sequence of spectra
 $$\mathrm{gr}^{n}_FTC^-(A)\rightarrow \mathrm{gr}^{n}_FTP(A)\rightarrow \mathrm{gr}^{n}_F\Sigma^2TC^+(A)$$
one is reduced to showing that $\mathrm{gr}_F^0TC^-(A)$ is an $\mathbb{E}_{\infty}$-$\bz$-algebra. Since $\mathrm{gr}_F^0TC^-(A)$ is a $\mathbb{E}_{\infty}$-$\mathrm{gr}_F^0TC^-(\bz)$-algebra, it is enough to show that $\mathrm{gr}_F^0TC^-(\bz)$ is an $\mathbb{E}_{\infty}$-$\bz$-algebra. But 
\[ \xymatrix{
\mathrm{gr}_F^0TC^-(\bz)\ar[d]\ar[r]& \mathrm{gr}_B^0HC^-(\bz)\simeq \bz \ar[d]\\
\prod_p\mathrm{gr}_{BMS}^0TC^-(\bz_p,\bz_p)\ar[r]&\prod_p \mathrm{gr}_{BMS}^0HC^-(\bz_p/\bz_p,\bz_p) \simeq \prod_p \bz_p
}
\] 
is a cartesian square of $\mathbb{E}_{\infty}$-algebras in spectra. Moreover, $\mathrm{gr}_{BMS}^0TC^-(\bz_p,\bz_p)$ is an $\mathbb{E}_{\infty}$-$\bz_p$-algebra and the map 
\begin{equation}\label{Zpmap}
\mathrm{gr}_{BMS}^0TC^-(\bz_p,\bz_p)\rightarrow \mathrm{gr}_{BMS}^0HC^-(\bz_p,\bz_p)
\end{equation}
is a morphism of $\mathbb{E}_{\infty}$-$\bz_p$-algebras. Indeed, we have 
$\mathrm{gr}_{BMS}^0TC^-(\bz_p,\bz_p)\simeq R\Gamma_{\mathrm{syn}}(\bz_p,\pi_{0} TC^-(-,\bz_p))$ and $\mathrm{gr}_{BMS}^0HC^-(\bz_p,\bz_p)\simeq R\Gamma_{\mathrm{syn}}(\bz_p,\pi_{0} HC^-(-,\bz_p))$ where $\pi_{0} TC^-(-,\bz_p)$ and $\pi_{0} HC^-(-,\bz_p)$ are sheaves of $\mathbb{E}_{\infty}$-$\bz_p$-algebras on $\mathrm{qrsPerfd}_{\bz_p}^{\mathrm{op}}$ and the map  (\ref{Zpmap}) is induced by the canonical morphism of sheaves of $\mathbb{E}_{\infty}$-$\bz_p$-algebras
$\pi_{0} TC^-(-,\bz_p)\rightarrow \pi_{0} HC^-(-,\bz_p)$. Since the equivalence (\ref{unfolding}) is symmetric monoidal, it follows that (\ref{Zpmap}) is a morphism of $\mathbb{E}_{\infty}$-$\bz_p$-algebras.

Moreover, the map 
$$\bz=\mathrm{gr}_B^0HC^-(\bz)\rightarrow \mathrm{gr}_{BMS}^0HC^-(\bz_p/\bz_p,\bz_p) \simeq \bz_p$$
is the $p$-completion map, i.e. the unique morphism of rings. We obtain a commutative square
 \[ \xymatrix{
H\bz\ar[d]\ar[r]& \mathrm{gr}_B^0HC^-(\bz) \ar[d]\\
\prod_p\mathrm{gr}_{BMS}^0TC^-(\bz_p,\bz_p)\ar[r]&\prod_p \mathrm{gr}_{BMS}^0HC^-(\bz_p/\bz_p,\bz_p) 
}
\] 
which gives a morphism of $\mathbb{E}_{\infty}$-algebras $H\bz\rightarrow \mathrm{gr}_F^0TC^-(\bz)$.

(5) For any $n\geq 0$, we have a cartesian square of $H\bz$-modules
\[ \xymatrix{
\mathrm{gr}_F^n \Sigma^2TC^+(A)\ar[d]\ar[r]& \mathrm{gr}_B^{n}\Sigma^2HC(A) \ar[d]\\
\prod_p\mathrm{gr}_{BMS}^n \Sigma^2TC^+(A^{\wedge}_p,\bz_p)\ar[r]&\prod_p \mathrm{gr}_{BMS}^n \Sigma^2HC(A^{\wedge}_p/\bz_p,\bz_p) 
}
\] 
hence (5) follows from the identification of $\mathrm{gr}_{BMS}^n$ and $\mathrm{gr}_{B}^n$ given in \cite{BMS} and \cite{Antieau18} respectively.

\end{proof}

\section{The bifiltration $\mathcal{Z}_{\bz}^*F^*$}\label{SectionBifilt}

The goal of this section is to prove Theorem \ref{thm-intro}. We first prove the analogous result for $p$-completely quasisyntomic rings in Sections \ref{sectfirstproof} and \ref{sectbifiltlocal}, where we define a bifiltration $\mathcal{Z}_{\bz_p}^*\mathrm{Fil}_{BMS}^*$ which interpolates between the BMS-filtration and the filtration of Proposition \ref{prop-filt-Z}. We give a direct construction of the induced filtration $\mathcal{Z}^*\mathrm{gr}^n_{BMS}THH(A,\bz_p)$ in Section \ref{sectfirstproof}, which works for  $p$-completely flat $\bz_p$-algebras. A lengthier construction is then given in Section \ref{sectbifiltlocal}, and is obtained as follows: we localize the base ring $\bz_p$ for the quasisyntomic topology, define our bifiltration locally and apply quasisyntomic descent. This gives the bifiltration $\mathcal{Z}_{\bz_p}^*\mathrm{Fil}_{BMS}^*$ and makes transparent its completeness and multiplicative properties. Moreover, this second construction works in full generality, i.e. for arbitrary quasisyntomic rings, and also applies to more general base rings (see Theorem \ref{thmpadic}). On the other hand, it requires more work, and the direct construction of Section \ref{sectfirstproof} is still necessary to compute $\mathrm{gr}^n_{BMS}THH(\bz_p,\bz_p)$.

\subsection{The filtration on $\mathrm{gr}^n_{BMS}THH(-,\bz_p)$}\label{sectfirstproof}

\begin{prop}\label{thmkey}
For any $A\in\mathrm{qSyn}_{\bz_p}$ and any $n\geq 0$, there is a   functorial, decreasing, $\bn^{\mathrm{op}}$-indexed, finite,  complete and  exhaustive filtration $\mathcal{Z}^*\mathrm{gr}^n_{BMS}THH(A,\bz_p)$ on the $\bz_p$-complex $\mathrm{gr}^n_{BMS}THH(A,\bz_p)$ with graded pieces
\begin{eqnarray*}
\mathrm{gr}^0_{\mathcal{Z}}\mathrm{gr}^n_{BMS}THH(A,\bz_p)&\simeq& (L\Lambda^{n}L_{A/\bz_p})^{\wedge}_p[n];\\
\mathrm{gr}^j_{\mathcal{Z}}\mathrm{gr}^n_{BMS}THH(A,\bz_p)&\simeq &L\Lambda^{n-j}L_{A/\bz_p}\otimes^L_{\bz_p}\bz_p/j\bz_p[n+j-1]\textrm{ for }j\geq 1.
\end{eqnarray*}
\end{prop}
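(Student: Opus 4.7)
The plan is to construct the filtration locally on quasiregular semiperfectoid rings and then extend to arbitrary $A\in\mathrm{qSyn}_{\bz_p}$ via quasisyntomic descent. For $R\in\mathrm{QRSPerfd}_{\bz_p}$, both $\pi_*THH(R,\bz_p)$ and $\pi_*HH(R/\bz_p,\bz_p)$ are concentrated in even degrees by \cite[Section 7]{BMS}, so $\mathrm{gr}^n_{BMS}THH(R,\bz_p)\simeq \pi_{2n}THH(R,\bz_p)[2n]$. The $\mathcal{Z}$-filtration
$$\mathcal{Z}^jTHH(R,\bz_p):=THH(R,\bz_p)\otimes_{THH(\bz)}\tau_{\geq 2j-1}THH(\bz)$$
of Proposition \ref{prop-filt-Z} has graded pieces $\mathrm{gr}^j_{\mathcal{Z}}THH(R,\bz_p)\simeq HH(R/\bz_p,\bz_p)\otimes^L_{\bz}\bz/j[2j-1]$ for $j\geq 1$ by B\"okstedt's computation $\pi_{2j-1}THH(\bz)\simeq \bz/j$, and satisfies $\mathcal{Z}^jTHH(R,\bz_p)\in\mathrm{Sp}_{\geq 2j-1}$.

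The induced filtration on $\mathrm{gr}^n_{BMS}THH(R,\bz_p)$ is obtained by extracting the BMS-$n$-graded piece of the $\mathcal{Z}$-filtration. Because tensoring with $\bz/j[2j-1]$ effectively shifts the BMS level by $j$, the $n$-graded piece of the induced filtration identifies with
$$\mathrm{gr}^{n-j}_{BMS}HH(R/\bz_p,\bz_p)\otimes^L_{\bz_p}\bz_p/j[2j-1]\simeq (L\Lambda^{n-j}L_{R/\bz_p})^{\wedge}_p[n-j]\otimes^L_{\bz_p}\bz_p/j[2j-1]\simeq L\Lambda^{n-j}L_{R/\bz_p}\otimes^L_{\bz_p}\bz_p/j[n+j-1],$$
using the BMS identification $\mathrm{gr}^m_{BMS}HH(R/\bz_p,\bz_p)\simeq (L\Lambda^m L_{R/\bz_p})^{\wedge}_p[m]$ recalled in Section \ref{sect-BMS}. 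Completeness follows from the connectivity bound on $\mathcal{Z}^j$, exhaustiveness from $\mathcal{Z}^0THH=THH$, and finiteness from the fact that the $j$-th graded piece contributes to the BMS-$n$-graded piece only for $j\leq n+1$.

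To extend to arbitrary $A\in\mathrm{qSyn}_{\bz_p}$, I will apply the unfolding equivalence (\ref{unfolding}): the presheaves $R\mapsto \mathcal{Z}^j\mathrm{gr}^n_{BMS}THH(R,\bz_p)$ on $\mathrm{QRSPerfd}_{\bz_p}^{\mathrm{op}}$ are sheaves, inherited from the sheaf property of $\tau_{\geq 2*}THH(-,\bz_p)$ and of $(L\Lambda^m L_{-/\bz_p})^{\wedge}_p$ established in \cite[Example 5.11]{BMS}. The main obstacle is the precise definition of the induced filtration at the level of complexes and the identification of the graded pieces: one must carefully track how the BMS-truncation interacts with the $\mathcal{Z}$-filtration and the $\bz/j$-Bockstein, so that the Postnikov-like differentials cancel out the cohomology in degrees other than $2n$ and produce the two-term complex $L\Lambda^{n-j}L_{R/\bz_p}\otimes^L_{\bz_p}\bz_p/j[n+j-1]$ for each graded piece. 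The direct approach works most transparently for $p$-completely flat $\bz_p$-algebras because the cotangent complex is then concentrated in a single degree, while for general quasisyntomic rings the sheafification step bypasses additional derived-categorical bookkeeping.
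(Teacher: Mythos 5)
You have the right skeleton (construct everything on quasiregular semiperfectoid rings $S$, use Bökstedt's computation and the evenness results of \cite{BMS}, identify graded pieces through the $p$-completed HKR identification, then unfold by quasisyntomic descent), and this is indeed the paper's strategy. But the step you yourself flag as ``the main obstacle'' is not a technicality that can be deferred: it is the actual content of the proof, and the naive version of your plan fails. If you try to ``extract the BMS-$n$-graded piece of the $\mathcal{Z}$-filtration'' directly, note that for $j\geq 1$ one has $\mathcal{Z}^jTHH(S,\bz_p)\simeq \mathcal{Z}^1THH(S,\bz_p)$ further filtered with graded pieces $HH(S/\bz_p,\bz_p)\otimes^L_{\bz}\bz/i[2i-1]$, whose homotopy is concentrated in \emph{odd} degrees; hence $\pi_{2n}\bigl(\mathcal{Z}^jTHH(S,\bz_p)\bigr)=0$ for all $j\geq 1$, and the filtration literally ``induced'' on $\mathrm{gr}^n_{BMS}THH(S,\bz_p)\simeq\pi_{2n}THH(S,\bz_p)[2n]$ by taking images is trivial. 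So the interaction between the Postnikov/BMS truncation and the $\mathcal{Z}$-filtration cannot be handled by a degree-shift bookkeeping argument; one needs a genuinely different device to produce the filtration whose graded pieces you want.

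The paper's resolution, which is absent from your proposal, is to pass to the cofiber $T(S,\bz_p):=\mathrm{Cofib}(THH(S,\bz_p)\to HH(S/\bz_p,\bz_p))=\mathcal{Z}^1THH(S,\bz_p)[1]$, whose $Z$-graded pieces $HH(S/\bz_p,\bz_p)\otimes^L_{\bz}\bz/i[2i]$ are even; the evenness of $\pi_*Z^iT(S,\bz_p)$ is then proved using the $p$-completed HKR filtration together with the fact that $L\Lambda^kL_{S/\bz_p}[-k]$ is $p$-completely flat not just over $S$ but over $\bz_p$ (this is Lemma \ref{trivlemma}, using that $\bz_p\to S$ is $p$-completely flat), so that $(L\Lambda^kL_{S/\bz_p})^{\wedge}_p\otimes^L_{\bz_p}\bz_p/i$ sits in a single degree. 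Evenness of $THH$, $HH$ and $T$ gives the short exact sequence $0\to\pi_{2n}THH(S,\bz_p)\to\pi_{2n}HH(S/\bz_p,\bz_p)\to\pi_{2n}T(S,\bz_p)\to 0$, and one gets a finite classical filtration on $\pi_{2n}T(S,\bz_p)$ with graded pieces $(L\Lambda^{n-i}L_{S/\bz_p}[i-n])^{\wedge}_p\otimes_{\bz_p}\bz_p/i$. The filtration on $\mathrm{gr}^n_{BMS}THH(S,\bz_p)$ is then defined by setting $\mathcal{Z}^i:=\pi_{2n}(Z^iT(S,\bz_p))[-1]$ for $i\geq 1$, mapping to $\pi_{2n}THH(S,\bz_p)$ via the (shifted) boundary map of the short exact sequence; its $\mathrm{gr}^0$ is then $\pi_{2n}HH\simeq(L\Lambda^nL_{S/\bz_p})^{\wedge}_p[-n]$, which is how the $j=0$ graded piece (not covered by your Bockstein formula) arises. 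Without this cofiber-and-boundary-map construction, or an equivalent careful filtered-object argument, your proposal does not produce the filtration, so as written there is a genuine gap at the central step; the descent step at the end is fine and matches the paper.
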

\begin{proof}
We first treat the case $n=0$. For any $S\in \mathrm{qrsPerfd}_{\bz_p}$, we have $$\mathrm{Fil}^*_{BMS}THH(S,\bz_p)\simeq \tau_{\geq 2*}THH(S,\bz_p)$$ hence 
$$\mathrm{gr}^0_{BMS}THH(S,\bz_p)\simeq \pi_{0}THH(S,\bz_p)\simeq S$$ 
since $\pi_{*}THH(S,\bz_p)$ is concentrated in even degrees by \cite[Theorem 7.1]{BMS}. For arbitrary $A\in\mathrm{qSyn}_{\bz_p}$, we obtain
$$\mathrm{gr}^0_{BMS}THH(A,\bz_p):=R\Gamma_{\mathrm{syn}}(A,\mathrm{gr}^0_{BMS}THH(-,\bz_p))\simeq A$$
by faithfully flat descent. So the result is proven for $n=0$ and we may suppose from now on that $n\geq 1$.

Let $S\in \mathrm{qrsPerfd}_{\bz_p}$. We consider the canonical map
$$THH(S,\bz_p)\rightarrow HH(S/\bz_p,\bz_p)$$
and we denote by $T(S,\bz_p)=\mathcal{Z}^1THH(S,\bz_p)[1]$ its cofiber. By Proposition \ref{prop-filt-Z},  $T(S,\bz_p)$ has an $\bn_{>0}^{\mathrm{op}}$-indexed complete exhaustive decreasing filtration $Z^*T(S,\bz_p)$ with $i$-th graded piece
$$\mathrm{gr}_Z^{i}T(S,\bz_p)\simeq HH (S/\bz_p,\bz_p)\otimes^L_{\bz}\bz/i\bz[2i]$$
for any $i\geq 1$. Moreover, the $p$-completion of the HKR filtration induces an $\bn^{\mathrm{op}}$-indexed complete exhaustive decreasing  filtration on $HH (S/\bz_p,\bz_p)\otimes^L_{\bz}\bz/i\bz[2i]$ with $j$-th graded piece
\begin{eqnarray*}
\left(\mathrm{gr}_{HKR}^jHH(S/\bz_p,\bz_p)\right)\otimes^L_{\bz}\bz/i\bz[2i]&\simeq &(L\Lambda^j L_{S/\bz_p})^{\wedge}_p[j] \otimes^L_{\bz}\bz/i\bz[2i]
\end{eqnarray*}
for any $j\geq 0$. Since $L\Lambda^j L_{S/\bz_p}[-j]\in D(S)$ is $p$-completely flat by \cite[Lemma 5.14]{BMS}, and since $\bz_p\rightarrow S$ is $p$-completely flat, it follows that 
$L\Lambda^j L_{S/\bz_p}[-j]\in D(\bz_p)$ is $p$-completely flat by Lemma \ref{trivlemma}. Hence the complex
$$(L\Lambda^j L_{S/\bz_p})^{\wedge}_p \otimes^L_{\bz_p}\bz_p/i\bz_p[2i+j]$$ 
is concentrated in homological degree $2(i+j)$. Let $i\geq 1$. By induction and completeness of the HKR filtration, it follows that  
$\pi_*\left(HH (S/\bz_p,\bz_p)\otimes^L_{\bz}\bz/i\bz[2i]\right)$ is concentrated in even homological degrees $\geq 2i$. By induction and completeness of the filtration $Z^*T(S,\bz_p)$, it follows that $\pi_*(Z^{i}T(S,\bz_p))$ is concentrated in even homological degrees $\geq 2i$ as well. 

In particular $\pi_{*} T(S,\bz_p)=\pi_*(Z^{1}T(S,\bz_p))$ is concentrated in even degrees.  Moreover $\pi_{*} HH(S,\bz_p)$ and $\pi_{*} THH(S,\bz_p)$ are both concentrated in even degrees by \cite[Lemma 5.14]{BMS} and \cite[Theorem 7.1]{BMS} respectively. We obtain a short exact sequence
$$0\rightarrow \pi_{2n} THH(S,\bz_p)\rightarrow \pi_{2n} HH(S/\bz_p,\bz_p) \rightarrow \pi_{2n} T(S/\bz_p,\bz_p)\rightarrow 0$$
which we may rewrite as follows:
\begin{equation}\label{firststep}
0\rightarrow \pi_{2n} THH(S,\bz_p)\rightarrow (L\Lambda^{n}L_{S/\bz_p})^{\wedge}_p[-n] \rightarrow \pi_{2n} T(S/\bz_p,\bz_p)\rightarrow 0.
\end{equation}
For any $i\geq 1$, the fiber sequence
$$Z^{i+1}T(S,\bz_p) \rightarrow Z^{i}T(S,\bz_p) \rightarrow  \mathrm{gr}^{i}_ZT(S,\bz_p)$$
induces an exact sequence
$$0\rightarrow \pi_{2n}(Z^{i+1}T(S,\bz_p)) \rightarrow \pi_{2n}(Z^{i}T(S,\bz_p)) \rightarrow  \pi_{2n}(\mathrm{gr}_Z^{i}T(S,\bz_p))\rightarrow 0$$
and we have
\begin{eqnarray*}
 \pi_{2n}(\mathrm{gr}_Z^{i}T(S,\bz_p))&\simeq &\pi_{2n} \left(HH (S/\bz_p,\bz_p)\otimes^L_{\bz}\bz/i\bz[2i]\right)\\
 &\simeq&\left( \pi_{2(n-i)} HH (S/\bz_p,\bz_p)\right)\otimes_{\bz_p}\bz_p/i\bz_p\\
 &\simeq& (L\Lambda^{n-i}L_{S/\bz_p}[i-n])^{\wedge}_p\otimes_{\bz_p}\bz_p/i\bz_p.
 \end{eqnarray*}
Hence we have a finite decreasing filtration
 $$\pi_{2n}(Z^{1}T(S,\bz_p))\hookleftarrow\cdots \hookleftarrow \pi_{2n}(Z^{n}T(S,\bz_p))\hookleftarrow  \pi_{2n}(Z^{n+1}T(S,\bz_p))=0$$
in the classical (i.e. underived) sense on the module
$$\pi_{2n}(T(S,\bz_p))=\pi_{2n}(Z^{1}T(S,\bz_p))$$
with graded pieces
\begin{equation}\label{afterstep}
\pi_{2n}(Z^{i}T(S,\bz_p))/\pi_{2n}(Z^{i+1}T(S,\bz_p))\simeq (L\Lambda^{n-i}L_{S/\bz_p}[i-n])^{\wedge}_p\otimes_{\bz_p}\bz_p/i\bz_p.
\end{equation}
We consider the (derived) filtration on $\pi_{2n} THH(S,\bz_p)$ given by 
\begin{eqnarray*}
Z^0\pi_{2n} THH(S,\bz_p)&:=&\pi_{2n} THH(S,\bz_p);\\ 
Z^i\pi_{2n} THH(S,\bz_p)&:=&\pi_{2n}(Z^{i}T(S,\bz_p))[-1]\textrm{ for }i\geq 1;
\end{eqnarray*}
where the map $$Z^1\pi_{2n} THH(S,\bz_p)=\pi_{2n}(T(S,\bz_p))[-1]\rightarrow \pi_{2n} THH(S,\bz_p)=Z^0\pi_{2n} THH(S,\bz_p)$$ is (the shift of) the boundary map induced by (\ref{firststep}). Since $S\in\mathrm{qrsPerfd}_{\bz_p}$, we have
$$\mathrm{gr}^n_{BMS}THH(S,\bz_p)\simeq \pi_{2n} THH(S,\bz_p)[2n]$$
and the filtration
$$\mathcal{Z}^{*} \mathrm{gr}^n_{BMS}THH(S,\bz_p):=(Z^i\pi_{2n} THH(S,\bz_p))[2n]$$
satisfies the conclusion of the proposition, by (\ref{firststep}) and (\ref{afterstep}). 

For arbitrary $A\in\mathrm{qSyn}_{\bz_p}$, we define
$$\mathcal{Z}^{*} \mathrm{gr}^n_{BMS}THH(A,\bz_p):=R\Gamma_{\mathrm{syn}}(A, \mathcal{Z}^{*} \mathrm{gr}^n_{BMS}THH(-,\bz_p)).$$
The conclusion of the theorem remains true for any such $A$ since $(L\Lambda^{n-j}L_{-/\bz_p})^{\wedge}_p$ is a quasisyntomic sheaf by \cite[Theorem 3.1]{BMS}.

\end{proof}

\begin{lem}\label{trivlemma}
Let $A\rightarrow B$ be a $p$-completely flat map of $p$-complete rings with bounded $p^{\infty}$-torsion. If $M\in D(B)$ is $p$-completely flat over $B$, then $M\in D(A)$ is $p$-completely flat over $A$.  
\end{lem}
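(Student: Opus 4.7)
The plan is to unwind the definition of $p$-complete flatness and use associativity of the derived tensor product together with transitivity of (ordinary) flatness over the mod $p$ reductions. By definition, $M \in D(A)$ is $p$-completely flat if $M \otimes^L_A A/pA$ has Tor-amplitude in $[0,0]$ over $A/pA$, i.e.\ is concentrated in degree $0$ and flat there. So the entire argument is carried out after reducing modulo $p$.

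First I would use that $M$ is a $B$-module, so $M \simeq M \otimes^L_B B$, and combine this with associativity of the derived tensor product to obtain
\[
M \otimes^L_A A/pA \;\simeq\; M \otimes^L_B \bigl(B \otimes^L_A A/pA\bigr).
\]
Next, since $A \to B$ is $p$-completely flat, the complex $B \otimes^L_A A/pA$ has Tor-amplitude $[0,0]$ over $A/pA$; in particular it is concentrated in degree $0$, where it equals the ordinary tensor product $B/pB$, which is flat over $A/pA$. Thus
\[
M \otimes^L_A A/pA \;\simeq\; M \otimes^L_B B/pB,
\]
viewed as an object of $D(A/pA)$ via the ring map $A/pA \to B/pB$.

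Then I would invoke the $p$-complete flatness of $M$ over $B$: the complex $M \otimes^L_B B/pB$ has Tor-amplitude $[0,0]$ over $B/pB$, hence is a flat $B/pB$-module concentrated in degree $0$. Finally, transitivity of (classical) flatness along the flat ring map $A/pA \to B/pB$ shows that this module is also flat over $A/pA$. Therefore $M \otimes^L_A A/pA$ is concentrated in degree $0$ and flat over $A/pA$, which is exactly the statement that $M$ is $p$-completely flat over $A$.

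There is essentially no obstacle here; the only mildly subtle point is the identification $B \otimes^L_A A/pA \simeq B/pB$, which is automatic from the hypothesis that $A \to B$ is $p$-completely flat (the bounded $p^\infty$-torsion assumption guarantees that all the relevant notions of completion and tensor product behave as expected and lets one safely pass between derived and classical reductions mod $p$).
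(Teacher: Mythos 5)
Your proof is correct and follows essentially the same route as the paper's: identify $M\otimes^L_A A/pA\simeq M\otimes^L_B B/pB$ using associativity and the $p$-complete flatness of $A\to B$, then conclude by transitivity of ordinary flatness along the flat map $A/pA\to B/pB$. No issues.
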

\begin{proof}
Since $B\in D(A)$ is  $p$-completely flat we have $B\otimes^L_A A/pA\simeq B/pB$.
We obtain that
$$M\otimes^L_AA/pA\simeq M\otimes^L_BB\otimes^L_AA/pA\simeq M\otimes^L_BB/pB$$
is concentrated in degree $0$ such that $H^0(M\otimes^L_BB/pB)$ is a flat $B/pB$-module. Since $A/pA\rightarrow B/pB$ is also flat,   $H^0(M\otimes^L_BB/pB)$ is a flat $A/pA$-module as well.
\end{proof}

\begin{cor}\label{key}
There is a canonical equivalence
$$\mathrm{Fil}^*_{BMS}THH(\bz_p,\bz_p)\stackrel{\sim}{\rightarrow} \tau_{\geq 2*-1} THH(\bz_p,\bz_p).$$
In particular, we have $\mathrm{gr}^0_{BMS}THH(\bz_p,\bz_p)\simeq\bz_p$ and $$\mathrm{gr}^n_{BMS}THH(\bz_p,\bz_p)\simeq\bz_p/n\bz_p[2n-1]\textrm{ for }n\geq 1.$$
Finally we have
$\mathrm{gr}^0_{F}THH(\bz)\simeq\bz$ and $$\mathrm{gr}^n_{F}THH(\bz)\simeq\bz/n\bz[2n-1]\textrm{ for }n\geq 1.$$

\end{cor}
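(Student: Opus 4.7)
The plan is to deduce Corollary \ref{key} from Proposition \ref{thmkey} applied to $A = \bz_p$, using the vanishing $L_{\bz_p/\bz_p} \simeq 0$, and then to transfer the resulting local identification to the global filtration via the defining cartesian square of $F^*THH(\bz)$.

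First I would compute the BMS graded pieces at $A = \bz_p$. In the finite, complete, exhaustive $\bn^{\mathrm{op}}$-indexed filtration $\mathcal{Z}^*\mathrm{gr}^n_{BMS}THH(\bz_p,\bz_p)$ provided by Proposition \ref{thmkey}, the vanishing of $L_{\bz_p/\bz_p}$ forces the $j = 0$ piece $(L\Lambda^n L_{\bz_p/\bz_p})^{\wedge}_p[n]$ to vanish for $n \geq 1$, and forces $L\Lambda^{n-j}L_{\bz_p/\bz_p}\otimes^L_{\bz_p}\bz_p/j\bz_p[n+j-1]$ to vanish unless $j = n$. The only surviving term is at $j = n$, contributing $\bz_p\otimes^L_{\bz_p}\bz_p/n\bz_p[2n-1] \simeq \bz_p/n\bz_p[2n-1]$, so $\mathrm{gr}^n_{BMS}THH(\bz_p,\bz_p) \simeq \bz_p/n\bz_p[2n-1]$ for $n \geq 1$. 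The case $n = 0$ is treated at the outset of the proof of Proposition \ref{thmkey} and yields $\bz_p$.

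Next I would upgrade this pointwise identification to an equivalence of filtrations. The key intermediate step is the connectivity statement $\mathrm{Fil}^n_{BMS}THH(\bz_p,\bz_p) \in \mathrm{Sp}_{\geq 2n-1}$: the quotient $\mathrm{Fil}^n_{BMS}/\mathrm{Fil}^m_{BMS}$ is an iterated extension of $\mathrm{gr}^k_{BMS}$ for $n \leq k < m$, each of which lies in $\mathrm{Sp}_{\geq 2k-1} \subseteq \mathrm{Sp}_{\geq 2n-1}$; since the connectivity of $\mathrm{gr}^k_{BMS}$ tends to infinity with $k$, this tower is pro-constant in each fixed degree, and the completeness equivalence $\mathrm{Fil}^n_{BMS} \simeq \lim_m \mathrm{Fil}^n_{BMS}/\mathrm{Fil}^m_{BMS}$ transports the connectivity to $\mathrm{Fil}^n_{BMS}$. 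The canonical map $\mathrm{Fil}^n_{BMS}THH(\bz_p,\bz_p) \to THH(\bz_p,\bz_p)$ then factors uniquely through $\tau_{\geq 2n-1}THH(\bz_p,\bz_p)$, producing a map of complete $\bn^{\mathrm{op}}$-indexed filtrations. It is an equivalence on graded pieces because the known computation of $\pi_* THH(\bz)$ combined with $p$-completion yields $\pi_{2n-1}THH(\bz_p,\bz_p) = \bz_p/n\bz_p$ and $\pi_{2n}THH(\bz_p,\bz_p) = 0$ for $n \geq 1$, matching $\mathrm{gr}^n_{BMS}$.

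For the global statement, I would apply the exact functor $\mathrm{gr}^n$ to the defining cartesian square of $F^*THH(\bz)$. Since $L_{\bz/\bz} \simeq 0$ and $L_{\bz_p/\bz_p} \simeq 0$, both $\mathrm{gr}^n_{HKR}HH(\bz)$ and $\mathrm{gr}^n_{BMS}HH(\bz_p/\bz_p,\bz_p)$ vanish for $n \geq 1$ and equal $\bz$ and $\bz_p$ respectively for $n = 0$. Combined with the local computation above, for $n \geq 1$ the cartesian square collapses to $\mathrm{gr}^n_F THH(\bz) \simeq \prod_p \bz_p/n\bz_p[2n-1] \simeq \bz/n\bz[2n-1]$, the last identification being the Chinese Remainder Theorem applied to the isomorphisms $\bz_p/n\bz_p \simeq \bz/p^{v_p(n)}\bz$; for $n = 0$ the fiber product of $\bz$ with $\prod_p \bz_p$ over $\prod_p \bz_p$ degenerates to $\bz$. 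The main obstacle in the whole argument is the connectivity claim used to build the filtered map in the second paragraph, since a priori derived inverse limits can lose a degree of connectivity; this is circumvented by exploiting that the graded pieces become arbitrarily connective as $k \to \infty$.
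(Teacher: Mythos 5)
Your proposal is correct and follows essentially the same route as the paper: you specialize Proposition \ref{thmkey} to $A=\bz_p$ using $L_{\bz_p/\bz_p}\simeq 0$ so only the $j=n$ graded piece survives, use the resulting connectivity of $\mathrm{Fil}^n_{BMS}THH(\bz_p,\bz_p)$ to factor the map to $THH(\bz_p,\bz_p)$ through $\tau_{\geq 2n-1}$ and conclude by comparing graded pieces, and you read off the global statement from the defining cartesian square of $F^*THH(\bz)$. The extra details you supply (the pro-constancy/completeness argument for the connectivity of $\mathrm{Fil}^n_{BMS}$ and the Chinese Remainder identification $\prod_p\bz_p/n\bz_p\simeq\bz/n\bz$) are exactly what the paper leaves implicit.
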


\begin{proof}
The second assertion follows immediately from Proposition \ref{thmkey} since $L\Lambda^{n-j}L_{\bz_p/\bz_p}=\bz_p$ for $n=j$ and $L\Lambda^{n-j}L_{\bz_p/\bz_p}=0$ otherwise. The third assertion then follows from the definition of $\mathrm{gr}^n_{F}THH(\bz)$.

In particular, $\mathrm{gr}^n_{BMS}THH(\bz_p,\bz_p)$ is concentrated in homological degrees $\geq 2n-1$ for any $n\geq 0$. It follows that $\pi_*(\mathrm{Fil}^n_{BMS}THH(\bz_p,\bz_p))$ is concentrated in homological degrees $\geq 2n-1$. Hence  the map $\mathrm{Fil}^n_{BMS}THH(\bz_p,\bz_p)\rightarrow THH(\bz_p,\bz_p)$
factors through $\tau_{\geq 2n-1}THH(\bz_p,\bz_p)$. We obtain a morphism of complete filtrations
$$\mathrm{Fil}^*_{BMS}THH(\bz_p,\bz_p)\rightarrow\tau_{\geq 2*-1} THH(\bz_p,\bz_p)$$
which induces an equivalence on graded pieces.

\end{proof}

\subsection{The bifiltration for $p$-adic rings}\label{sectbifiltlocal}

\begin{prop}\label{prop1}
Let $R\in \mathrm{QSyn}_{\bz_p}$. Let $A,S\in \mathrm{qrsPerfd}_{R}$ and suppose that $R\rightarrow S$ is a quasisyntomic cover. If $R$ is perfectoid or $R=\bz_p$, we allow more generally $A\in \mathrm{QRSPerfd}_{R}$.

\begin{enumerate}
\item There is an $\bn^{\mathrm{op}}$-indexed decreasing multiplicative complete exhaustive $\bt$-equivariant filtration 
$\mathcal{Z}_S^*\mathrm{Fil}^*_{BMS} THH(A\widehat{\otimes}_R S,\bz_p)$
on $\mathrm{Fil}^*_{BMS}THH(A\widehat{\otimes}_R S,\bz_p)$
 endowed with equivalences
$$\mathrm{gr}^j_{\mathcal{Z}_S} \mathrm{Fil}^{*}_{BMS}THH(A\widehat{\otimes}_R S,\bz_p)\simeq \left( \mathrm{Fil}^{*-j}_{BMS}HH(A/R,\bz_p)\right)\widehat{\otimes}_{R}(\pi_{2j}THH(S,\bz_p))[2j]$$
for any $j\in \bz$.

\item For $?=P,C^-$, there is an  $\bn^{\mathrm{op}}$-indexed decreasing multiplicative complete exhaustive filtration
$\mathcal{Z}_S^*\mathrm{Fil}^*_{BMS} T?(A\widehat{\otimes}_R S,\bz_p)$
on $\mathrm{Fil}^*_{BMS}T?(A\widehat{\otimes}_R S,\bz_p)$ 
endowed with canonical maps
$$ \left( \mathrm{Fil}^{*-j}_{BMS}H?(A/R,\bz_p)\right)\widehat{\otimes}_{R}(\pi_{2j}THH(S,\bz_p))[2j]\rightarrow  \mathrm{gr}^j_{\mathcal{Z}_S} \mathrm{Fil}^{*}_{BMS}T?(A\widehat{\otimes}_R S,\bz_p)$$
inducing equivalences
$$\mathrm{gr}^j_{\mathcal{Z}_S} \mathrm{gr}^{n}_{BMS}TP(A\widehat{\otimes}_R S,\bz_p)\simeq R\mathrm{lim}_k \left((L\Omega^{<k}_{A/R})^{\wedge}_p[2(n-j)] \widehat{\otimes}_{R}(\pi_{2j}THH(S,\bz_p))[2j]\right);$$
$$\mathrm{gr}^j_{\mathcal{Z}_S} \mathrm{gr}^{n}_{BMS}TC^-(A\widehat{\otimes}_R S,\bz_p)\simeq R\mathrm{lim}_k \left((L\Omega^{k> *\geq n-j}_{A/R})^{\wedge}_p[2(n-j)] \widehat{\otimes}_{R}(\pi_{2j}THH(S,\bz_p))[2j]\right)$$
for any $j\in \bz$. Here, for $k\geq n-j$, we denote by $L\Omega^{k> *\geq n-j}_{A/R}$ the cofiber of the map $L\widehat{\Omega}^{\geq k}_{A/R}\rightarrow L\widehat{\Omega}^{\geq n-j}_{A/R}$.

\end{enumerate}

\end{prop}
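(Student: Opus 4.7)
The plan is to exploit the base-change equivalence
$$THH(A\widehat{\otimes}_R S,\bz_p)\;\simeq\; HH(A/R,\bz_p)\,\widehat{\otimes}_R\,THH(S,\bz_p),$$
which is $\bt$-equivariant for the action coming from the second factor and follows from (\ref{computeTHH1})--(\ref{computeTHH2}) together with the pushout-preservation property of $THH$. Since $S\in\mathrm{qrsPerfd}_R$, the groups $\pi_*THH(S,\bz_p)$ are concentrated in even degrees by \cite[Theorem 7.1]{BMS}, so $\mathrm{Fil}^*_{BMS}THH(S,\bz_p)$ agrees with the multiplicative, $\bt$-equivariant double-speed Postnikov filtration $\tau_{\geq 2*}THH(S,\bz_p)$, whose graded pieces $\pi_{2j}THH(S,\bz_p)[2j]$ are discrete $R$-modules carrying the trivial $\bt$-action.

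First, I would define $\mathcal{Z}_S^*\mathrm{Fil}^*_{BMS}THH(A\widehat{\otimes}_R S,\bz_p)$ as the completed Day convolution, in $\widehat{DBF}(\bs[\bt])$, of $\mathrm{Fil}^*_{BMS}HH(A/R,\bz_p)$ with $\tau_{\geq 2*}THH(S,\bz_p)$, where the $\mathcal{Z}_S$-index records the filtration level contributed by the second factor while the total degree is the BMS index. Multiplicativity, $\bt$-equivariance, $\bn^{\mathrm{op}}$-indexing and completeness are inherited from the inputs through the symmetric-monoidal structure on $\widehat{DBF}$, while exhaustiveness in the $\mathcal{Z}_S$-direction reduces to that of $\tau_{\geq 2*}THH(S,\bz_p)$. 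The graded-piece formula of (1),
$$\mathrm{gr}^j_{\mathcal{Z}_S}\mathrm{Fil}^n_{BMS}\;\simeq\;\mathrm{Fil}^{n-j}_{BMS}HH(A/R,\bz_p)\,\widehat{\otimes}_R\,\pi_{2j}THH(S,\bz_p)[2j],$$
then falls out of the standard computation of the associated graded of a Day convolution.

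Next, I would identify $\mathcal{Z}_S^0\mathrm{Fil}^*_{BMS}$ with the BMS filtration on $THH(A\widehat{\otimes}_R S,\bz_p)$. Both are complete multiplicative filtrations, so it suffices to compare graded pieces: the Day convolution side splits as $\bigoplus_{a+b=n}\mathrm{gr}^a_{BMS}HH(A/R,\bz_p)\,\widehat{\otimes}_R\,\pi_{2b}THH(S,\bz_p)[2b]$, while \cite[Theorem 1.17]{BMS} combined with quasisyntomic descent (from the qrsPerfd case, where BMS is the Postnikov filtration and the K\"unneth map is transparent) yields the same decomposition of $\mathrm{gr}^n_{BMS}THH(A\widehat{\otimes}_R S,\bz_p)$.

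Finally, for (2) I would apply the lax symmetric-monoidal functors $(-)^{h\bt}$ and $(-)^{t\bt}$ term-wise to the bifiltration of (1). These transfer multiplicativity, completeness and $\bn^{\mathrm{op}}$-indexing, and, as exact functors between stable $\infty$-categories (being limit functors), they also preserve the $\mathcal{Z}_S$-graded pieces. Combined with the trivial $\bt$-action on $\pi_{2j}THH(S,\bz_p)$, this produces the canonical maps of the statement, whose source is $\mathrm{Fil}^{n-j}_{BMS}H?(A/R,\bz_p)\,\widehat{\otimes}_R\,\pi_{2j}THH(S,\bz_p)[2j]$ for $?=P,C^-$. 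Passing to $\mathrm{gr}^n_{BMS}$ and invoking the identifications of $\mathrm{gr}^n_{BMS}HP$ and $\mathrm{gr}^n_{BMS}HC^-$ in terms of the Hodge-completed de Rham complex from \cite[Theorem 1.17]{BMS} produces the formulas of (2); the $R\mathrm{lim}_k$ appears precisely because $\widehat{\otimes}_R$ does not commute with the limit defining the Hodge completion. The principal obstacle is the matching step $\mathcal{Z}_S^0=\mathrm{Fil}^*_{BMS}$: it requires a K\"unneth-type compatibility between the quasisyntomic-descent construction of BMS on $A\widehat{\otimes}_R S$ and the explicit Day convolution used here.
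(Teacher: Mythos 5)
Your opening base-change claim is false, and it is the crux of the whole proposition. From (\ref{computeTHH1})--(\ref{computeTHH2}) one only gets $THH(A\widehat{\otimes}_R S,\bz_p)\widehat{\otimes}_{THH(S,\bz_p)}S\simeq HH(A\widehat{\otimes}_RS/S,\bz_p)\simeq HH(A/R,\bz_p)\widehat{\otimes}_RS$; there is no equivalence $THH(A\widehat{\otimes}_R S,\bz_p)\simeq HH(A/R,\bz_p)\widehat{\otimes}_R THH(S,\bz_p)$. What is true is $THH(A\widehat{\otimes}_RS)\simeq THH(A)\otimes_{THH(R)}THH(S)$, and rewriting this as $HH(A/R)\otimes_RTHH(S)$ would require the $THH(R)$-algebra structure on $THH(S)$ to factor through $R$, which it does not. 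A concrete counterexample inside the hypotheses: take $R=S=\bz_p$ (the identity is a quasisyntomic cover) and $A=\bF_p\in\mathrm{QRSPerfd}_{\bz_p}$; then $\pi_*THH(\bF_p,\bz_p)$ is concentrated in even degrees, while $HH(\bF_p/\bz_p,\bz_p)\widehat{\otimes}_{\bz_p}THH(\bz_p,\bz_p)$ has $\pi_{2p-1}\neq 0$ (it contains $\pi_0HH\otimes\pi_{2p-1}THH(\bz_p,\bz_p)=\bz/p$). Indeed, the content of the proposition is precisely that $THH(A\widehat{\otimes}_RS,\bz_p)$ only carries a filtration whose associated graded pieces are $HH(A/R,\bz_p)\widehat{\otimes}_R\pi_{2j}THH(S,\bz_p)[2j]$; your proposal assumes a stronger (split) form of the conclusion. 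This also undermines your Day-convolution construction: its underlying object is $HH(A/R,\bz_p)\widehat{\otimes}_RTHH(S,\bz_p)$, so the ``matching step'' $\mathcal{Z}_S^0=\mathrm{Fil}^*_{BMS}$ that you flag as the principal obstacle is not a K\"unneth compatibility to be checked but an identification that fails, since the total objects differ. The paper's route is different: set $\mathcal{Z}_S^*THH(A\widehat{\otimes}_RS,\bz_p):=THH(A\widehat{\otimes}_RS,\bz_p)\widehat{\otimes}_{THH(S,\bz_p)}\tau_{\geq 2*}THH(S,\bz_p)$, compute its graded pieces by base change, use evenness and $p$-complete flatness (\cite[Lemma 5.14, Theorem 7.1]{BMS} plus Lemma \ref{lemflat}) to see each $\mathcal{Z}_S^j$ is even, and then define $\mathcal{Z}_S^j\mathrm{Fil}^i_{BMS}:=\tau_{\geq 2i}\mathcal{Z}_S^j$, which at $j=0$ recovers the BMS filtration because $A\widehat{\otimes}_RS$ is quasiregular semiperfectoid, so that filtration is the double-speed Postnikov filtration.

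For part (2) your mechanism (apply the lax symmetric monoidal, exact functors $(-)^{h\bt}$, $(-)^{t\bt}$ to the filtration and use triviality of the $\bt$-action on $\pi_{2j}THH(S,\bz_p)$) matches the paper in outline, but the delicate point is unaddressed: the comparison map $HP(A/R,\bz_p)\widehat{\otimes}_R\pi_{2j}THH(S,\bz_p)[2j]\to\bigl(HH(A/R,\bz_p)\widehat{\otimes}_R\pi_{2j}THH(S,\bz_p)[2j]\bigr)^{t\bt}$ is in general only the completion of the source with respect to the Hodge/Tate filtration, which is exactly why the graded pieces are $R\mathrm{lim}_k\bigl((L\Omega^{<k}_{A/R})^{\wedge}_p[2(n-j)]\widehat{\otimes}_R\pi_{2j}THH(S,\bz_p)[2j]\bigr)$ rather than $(L\widehat{\Omega}_{A/R})^{\wedge}_p\widehat{\otimes}_R(\cdots)$. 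Your one-line remark that $\widehat{\otimes}_R$ fails to commute with the Hodge-completion limit identifies the phenomenon but does not prove the identification; the paper needs the degeneration of the Tate spectral sequence (again via evenness and Lemma \ref{lemflat}) and the explicit comparison of abutment filtrations in Remark \ref{rem-marc}. As it stands, the first error invalidates the construction, and the second point would still need an argument even after it is repaired.
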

\begin{proof}  \textbf{(1)} We have  $A,S\in \mathrm{QRSPerfd}_{R}$ and $R\rightarrow S$ is a quasisyntomic cover by assumption. It follows\footnote{It follows from \cite[Lemmas 4.15, 4.16]{BMS} that $D=A\widehat{\otimes}_RS\in \mathrm{QSyn}_{R}$. Moreover $D$ receives a map from a perfectoid ring, as $S$ does. By \cite[Corollary 7.2.1.23]{LurieHA} we have $D/p=\pi_0(A\otimes^L_RS\otimes^L_{\bz_p}\bz_p/p)\simeq A/p\otimes_{R/p} S/p$, which is semi-perfect.} that $A\widehat{\otimes}_RS\in \mathrm{QRSPerfd}_{R}$. We define an $\bn^{\mathrm{op}}$-indexed decreasing exhaustive filtration on $THH(A\widehat{\otimes}_R S,\bz_p)$ as follows:
$$\mathcal{Z}_S^*THH(A\widehat{\otimes}_R S,\bz_p):= THH(A\widehat{\otimes}_R S,\bz_p)\widehat{\otimes}_{THH(S,\bz_p)}\tau_{\geq 2*} THH(S,\bz_p).$$
The filtration $\tau_{\geq 2*} THH(S,\bz_p)\in DF(THH(S,\bz_p))$ is $\bt$-equivariant and multiplicative, hence so is the filtration $\mathcal{Z}_S^*THH(A\widehat{\otimes}_R S,\bz_p)$.
The functor
$$THH(A\widehat{\otimes}_R S,\bz_p)\otimes_{THH(S,\bz_p)}(-):D(THH(S,\bz_p))\longrightarrow \mathrm{Sp}$$
is right $t$-exact \cite[Definition 1.3.3.1]{LurieHA} by \cite[Corollary 7.2.1.23]{LurieHA}, and so is the  $p$-completion functor by \cite[Proposition 7.3.4.4]{LurieSAG}. It follows that the connectivity of  $\mathcal{Z}_S^nTHH(A\widehat{\otimes}_R S,\bz_p)$ tends to $\infty$ as $n\rightarrow \infty$, so that $\mathcal{Z}_S^*THH(A\widehat{\otimes}_R S,\bz_p)$ is also complete.

By \cite[Theorem 7.1(1)]{BMS}, $\pi_*THH(S,\bz_p)$ is concentrated in even degrees. Hence for any $j\geq 0$, we have
\begin{eqnarray}
&&\mathrm{gr}^j_{\mathcal{Z}_S} THH(A\widehat{\otimes}_R S,\bz_p)\\
&\simeq& THH(A\widehat{\otimes}_R S,\bz_p)\widehat{\otimes}_{THH(S,\bz_p)} \pi_{2j} THH(S,\bz_p)[2j] \\
&\simeq& THH(A\widehat{\otimes}_R S,\bz_p)\widehat{\otimes}_{THH(S,\bz_p)}S\widehat{\otimes}_S \pi_{2j} THH(S,\bz_p)[2j]\\
\label{thirdy}&\simeq& (THH(A\otimes^L_R S)\otimes_{THH(S)}S)^{\wedge}_p\widehat{\otimes}_S \pi_{2j} THH(S,\bz_p)[2j]\\
\label{fourthy}&\simeq& THH(A\otimes^L_R S/S,\bz_p)\widehat{\otimes}_S \pi_{2j} THH(S,\bz_p)[2j]\\
\label{fifthy}&\simeq& HH(A/R,\bz_p)\widehat{\otimes}_R S\widehat{\otimes}_S \pi_{2j} THH(S,\bz_p)[2j]\\
\label{lasty}&\simeq& HH(A/R,\bz_p)\widehat{\otimes}_R \pi_{2j} THH(S,\bz_p)[2j].
\end{eqnarray}
Here (\ref{thirdy}) follows from Remark \ref{rem-p-general} and from the fact that the $p$-adic completion functor is monoidal; (\ref{fourthy}) and (\ref{fifthy}) follow from the universal properties of $THH$ and $HH$, see (\ref{computeTHH1}) and (\ref{computeTHH2}).

By \cite[Lemma 5.14, Theorem 7.1]{BMS}, $HH(A/R,\bz_p)$ is concentrated in even degrees, $\pi_{2k} HH(A/R,\bz_p)$ is a $p$-completely flat $A$-module for all $k$, and $\pi_{2l} THH(S,\bz_p)$ is a $p$-completely flat $S$-module for all $l$.  It then follows 
from (\ref{lasty}), completeness of the filtration $(\tau_{\geq 2*}HH(A/R,\bz_p))\widehat{\otimes}_R \pi_{2j} THH(S,\bz_p)[2j]$ and Lemma \ref{lemflat} 
that $\pi_*\mathrm{gr}_{\mathcal{Z}_S}^j THH(A\widehat{\otimes}_R S)$ is concentrated in even degrees. By induction and completeness of the filtration $\mathcal{Z}_S^*$, 
$\pi_*\mathcal{Z}_S^j THH(A\widehat{\otimes}_R S)$ is concentrated in even degrees as well, for any $j\geq 0$.

We now define a bicomplete multiplicative bifiltration on $THH(A\widehat{\otimes}_R S,\bz_p)$ as follows:
$$\mathcal{Z}_S^j\mathrm{Fil}^i_{BMS}THH(A\widehat{\otimes}_R S,\bz_p):=\tau_{\geq 2i} (\mathcal{Z}_S^j THH(A\widehat{\otimes}_R S)).$$
Since $A\widehat{\otimes}_RS\in \mathrm{QRSPerfd}_{R}$, we have
$$\mathrm{Fil}^{*}_{BMS}THH(A\widehat{\otimes}_R S)\simeq \tau_{\geq 2*} THH(A\widehat{\otimes}_R S)\simeq \mathcal{Z}_S^0\mathrm{Fil}^*_{BMS}THH(A\widehat{\otimes}_R S,\bz_p)$$
so that $\mathcal{Z}_S^*\mathrm{Fil}^*_{BMS}THH(A\widehat{\otimes}_R S,\bz_p)$ is an $\bn^{\mathrm{op}}$-indexed decreasing multiplicative complete exhaustive filtration on $\mathrm{Fil}^*_{BMS}THH(A\widehat{\otimes}_R S,\bz_p)$. 

By definition $\mathrm{gr}^j_{\mathcal{Z}_S}\mathrm{Fil}^i_{BMS}THH(A\widehat{\otimes}_R S,\bz_p)$ is the cofiber of the map
$$\tau_{\geq 2i} (\mathcal{Z}_S^{j+1} THH(A\widehat{\otimes}_R S))\rightarrow \tau_{\geq 2i} (\mathcal{Z}_S^j THH(A\widehat{\otimes}_R S)).$$
The fact that $\pi_*\mathcal{Z}_S^{j+1} THH(A\widehat{\otimes}_R S)$ and $\pi_*\mathcal{Z}_S^{j} THH(A\widehat{\otimes}_R S)$ are both concentrated in even degrees therefore gives 
\begin{eqnarray*}
&&\mathrm{gr}^j_{\mathcal{Z}_S}\mathrm{Fil}^i_{BMS}THH(A\widehat{\otimes}_R S,\bz_p)\\
&\simeq& \tau_{\geq 2i}\left(\mathrm{gr}^j_{\mathcal{Z}_S} THH(A\widehat{\otimes}_R S,\bz_p)\right)\\
&\simeq & \tau_{\geq 2i}\left(HH(A/R,\bz_p)\widehat{\otimes}_R \pi_{2j} THH(S,\bz_p)[2j]\right)\\
&\simeq & \left(\tau_{\geq 2(i-j)}HH(A/R,\bz_p)\right)\widehat{\otimes}_R \pi_{2j} THH(S,\bz_p)[2j]\\
&\simeq & \left(\mathrm{Fil}^{i-j}_{BMS}HH(A/R,\bz_p)\right)\widehat{\otimes}_R \pi_{2j} THH(S,\bz_p)[2j]
\end{eqnarray*}
where the last equivalence follows from the fact that $A\in \mathrm{QRSPerfd}_{R}$.
Using \cite[Lemma 5.14(2)]{BMS}, we obtain
$$\mathrm{gr}^j_{\mathcal{Z}_S}\mathrm{gr}^i_{BMS}THH(A\widehat{\otimes}_R S,\bz_p)\simeq (L\Lambda^{i-j}L_{A/R})^{\wedge}_p\widehat{\otimes}_R \pi_{2j} THH(S,\bz_p)[i+j].$$

\textbf{(2)} We give a detailed proof for $TP$; the same proof works for $TC^-$ as well.  We define the following $\bn^{\mathrm{op}}$-indexed decreasing exhaustive filtration
$$\mathcal{Z}_S^*TP(A\widehat{\otimes}_R S,\bz_p):= \left(\mathcal{Z}_S^*THH(A\widehat{\otimes}_R S,\bz_p)\right)^{t\bt}$$
 on $TP(A\widehat{\otimes}_R S,\bz_p)$, which is also multiplicative since $(-)^{t\bt}$ is lax symmetric monoidal.
We define similarly $\bn^{\mathrm{op}}$-indexed decreasing exhaustive filtrations $\mathcal{Z}_S^*TC^-(A\widehat{\otimes}_R S,\bz_p)$ and $\mathcal{Z}_S^*TC^+(A\widehat{\otimes}_R S,\bz_p)$ on $TC^-(A\widehat{\otimes}_R S,\bz_p)$ and $TC^+(A\widehat{\otimes}_R S,\bz_p)$ respectively.
The filtration $\mathcal{Z}_S^*THH(A\widehat{\otimes}_R S,\bz_p)$ is complete, hence so is $\mathcal{Z}_S^*TC^-(A\widehat{\otimes}_R S,\bz_p)$. Since $(-)_{h\bt}$ is right $t$-exact, the filtration $\mathcal{Z}_S^*TC^+(A\widehat{\otimes}_R S,\bz_p)$ is complete as well. Hence $\mathcal{Z}_S^*TP(A\widehat{\otimes}_R S,\bz_p)$ is complete.
For any $j\geq 0$, we have
\begin{eqnarray*}
\mathrm{gr}^j_{\mathcal{Z}_S} TP(A\widehat{\otimes}_R S,\bz_p) &\simeq&  \left(\mathrm{gr}^j_{\mathcal{Z}_S} THH(A\widehat{\otimes}_R S,\bz_p) \right)^{t\bt}\\
\label{sed}&\simeq& \left(HH(A/R,\bz_p)\widehat{\otimes}_R \pi_{2j} THH(S,\bz_p) [2j]\right)^{t\bt} 
\end{eqnarray*}
since the equivalence (\ref{lasty}) is $\bt$-equivariant, where $\bt$ acts trivially on $\pi_{2j} THH(S,\bz_p)$. As in the first part of the proof, it follows that $\pi_*\mathcal{Z}^j_{S} TP(A\widehat{\otimes}_R S,\bz_p)$ is concentrated in even degrees for any $j\geq 0$.

We define a decreasing multiplicative bicomplete biexhaustive bifiltration on $TP(A\widehat{\otimes}_R S,\bz_p)$ as follows:
$$\mathcal{Z}_S^j\mathrm{Fil}^i_{BMS} TP(A\widehat{\otimes}_R S,\bz_p):=\tau_{\geq 2i} (\mathcal{Z}_S^j TP(A\widehat{\otimes}_R S)).$$
Note that, since $A\widehat{\otimes}_RS\in \mathrm{QRSPerfd}_{R}$, we have
$$\mathrm{Fil}^{*}_{BMS}TP(A\widehat{\otimes}_R S)\simeq \tau_{\geq 2*} TP(A\widehat{\otimes}_R S)\simeq \mathcal{Z}_S^0\mathrm{Fil}^*_{BMS} TP(A\widehat{\otimes}_R S,\bz_p).$$ 
Since $\pi_*\mathcal{Z}^j_{S} TP(A\widehat{\otimes}_R S,\bz_p)$ is concentrated in even degrees for any $j\geq 0$, we have  an equivalence
\begin{eqnarray*}
\mathrm{gr}_{\mathcal{Z}_S}^j\mathrm{Fil}^*_{BMS}TP(A\widehat{\otimes}_R S,\bz_p)&\simeq& \tau_{\geq 2*}\left(\mathrm{gr}^j_{\mathcal{Z}_S} TP(A\widehat{\otimes}_R S,\bz_p)\right)\\
&\simeq & \tau_{\geq 2*}\left(HH(A/R,\bz_p)\widehat{\otimes}_R \pi_{2j} THH(S,\bz_p)[2j]\right)^{t\bt}
\end{eqnarray*}
Consider the canonical map 
\begin{equation}\label{multimap}
HP(A/R,\bz_p)\widehat{\otimes}_R \pi_{2j} THH(S,\bz_p)[2j]\rightarrow \left(HH(A/R,\bz_p)\widehat{\otimes}_R \pi_{2j} THH(S,\bz_p)[2j]\right)^{t\bt}
\end{equation}
induced by the fact that $(-)^{t\bt}$ is lax symmetric monoidal and by the canonical map 
$$\pi_{2j} THH(S,\bz_p)\rightarrow (\pi_{2j} THH(S,\bz_p))^{h\bt}\rightarrow (\pi_{2j} THH(S,\bz_p))^{t\bt},$$ 
which is well defined since $\bt$ acts trivially on $\pi_{2j} THH(S,\bz_p)$. 
The complex $$\left(\tau_{\geq 2(i-j)}HP(A/R,\bz_p)\right)\widehat{\otimes}_R \pi_{2j} THH(S,\bz_p)[2j]$$ is concentrated in homological degrees $\geq 2i$ by \cite[Corollary 7.1.2.23]{LurieHA} and \cite[Proposition 7.3.4.4]{LurieSAG}.
Hence  (\ref{multimap}) induces a morphism of filtrations
\begin{eqnarray}
&&\left(\mathrm{Fil}^{*-j}_{BMS}HP(A/R,\bz_p)\right)\widehat{\otimes}_R \pi_{2j} THH(S,\bz_p)[2j]\\
\label{eqiqi}&\simeq &\left(\tau_{\geq 2(*-j)}HP(A/R,\bz_p)\right)\widehat{\otimes}_R \pi_{2j} THH(S,\bz_p)[2j]\\
\label{thatmap}&\rightarrow& \tau_{\geq 2*}\left(HH(A/R,\bz_p)\widehat{\otimes}_R \pi_{2j} THH(S,\bz_p)[2j]\right)^{t\bt}\\
&\simeq & \mathrm{gr}_{\mathcal{Z}_S}^j\mathrm{Fil}^*_{BMS}TP(A\widehat{\otimes}_R S,\bz_p).
\end{eqnarray}
Here (\ref{eqiqi}) holds since $A\in \mathrm{QRSPerfd}_{R}$. Note also  that $HH(A/R,\bz_p)\widehat{\otimes}_R \pi_{2j} THH(S,\bz_p)$ is concentrated in even degrees by Lemma \ref{lemflat}.

We have 
\begin{eqnarray*}
\mathrm{gr}_{\mathcal{Z}_S}^j\mathrm{gr}^{i}_{BMS}TP(A\widehat{\otimes}_R S,\bz_p)&\simeq &\pi_{2i}\left(HH(A/R,\bz_p)\widehat{\otimes}_R \pi_{2j} THH(S,\bz_p)[2j]\right)^{t\bt}[2i]\\
&\simeq &(\pi_{2(i-j)}\left(HH(A/R,\bz_p)\widehat{\otimes}_R \pi_{2j} THH(S,\bz_p)\right)^{t\bt})[2i].
\end{eqnarray*}
The Tate spectral sequence computing 
$$\mathrm{gr}_{\mathcal{Z}_S}^j  TP(A\widehat{\otimes}_R S,\bz_p)\simeq \left(HH(A/R,\bz_p)\widehat{\otimes}_R \pi_{2j} THH(S,\bz_p) [2j]\right)^{t\bt}$$ 
degenerates (as $E_2^{a,b}=0$ if $a$ or $b$ is odd by Lemma \ref{lemflat}), and
gives a decreasing complete exhaustive abutment filtration on 
$$\pi_{2i}\left(\mathrm{gr}_{\mathcal{Z}_S}^j  TP(A\widehat{\otimes}_R S,\bz_p)\right)\simeq \pi_{2(i-j)}\left(HH(A/R,\bz_p)\widehat{\otimes}_R \pi_{2j} THH(S,\bz_p)\right)^{t\bt}$$ 
with (after re-indexing) $k$-graded piece
\begin{eqnarray*}
\pi_{2k} \left(HH(A/R,\bz_p)\widehat{\otimes}_R \pi_{2j} THH(S,\bz_p)\right)&\simeq &\pi_{2k} HH(A/R,\bz_p)\widehat{\otimes}_R \pi_{2j} THH(S,\bz_p)\\
&\simeq& (L\Lambda^kL_{A/R})^{\wedge}_p[-k] \widehat{\otimes}_R \pi_{2j} THH(S,\bz_p).
\end{eqnarray*}
Similarly, the Tate spectral sequence computing 
$HP(A,\bz_p)$ 
degenerates and
gives a decreasing complete exhaustive abutment filtration on $\pi_{2(i-j)}HP(A/R,\bz_p)$. Applying $(-)\widehat{\otimes}_R \pi_{2j} THH(S,\bz_p)$, we obtain a decreasing exhaustive abutment filtration on 
$\pi_{2(i-j)}HP(A/R,\bz_p)\widehat{\otimes}_R \pi_{2j} THH(S,\bz_p)$
with $k$-graded piece
$$\pi_{2k} HH(A/R,\bz_p)\widehat{\otimes}_R \pi_{2j} THH(S,\bz_p)\simeq (L\Lambda^kL_{A/R})^{\wedge}_p[-k] \widehat{\otimes}_R \pi_{2j} THH(S,\bz_p).$$
The morphism induced by (\ref{thatmap}) on $i$-graded pieces
\begin{eqnarray}
\label{noncompletegrpHodge}&&(L\widehat{\Omega}_{A/R})^{\wedge}_p\widehat{\otimes}_R \pi_{2j} THH(S,\bz_p)\\
\label{noncompletegrp}&\simeq&\left(\pi_{ 2(i-j)}HP(A/R,\bz_p)\right)\widehat{\otimes}_R \pi_{2j} THH(S,\bz_p)\\
\label{completegrp}&\longrightarrow& \pi_{2(i-j)}\left(HH(A/R,\bz_p)\widehat{\otimes}_R \pi_{2j} THH(S,\bz_p)\right)^{t\bt}
\end{eqnarray}
is compatible with the abutment filtrations mentioned above, and induces equivalences on graded pieces.
However, the abutment filtration on (\ref{noncompletegrp}), which corresponds to the filtration $(L\widehat{\Omega}^{\geq *}_{A/R})^{\wedge}_p\widehat{\otimes}_R \pi_{2j} THH(S,\bz_p)$ on (\ref{noncompletegrpHodge}), might not be complete. By contrast, the abutment filtration obtained on the target of (\ref{completegrp}) is complete. In other words, the map (\ref{completegrp}) is the completion map of the source with respect to the filtration induced by the Tate spectral sequence. See Remark \ref{rem-marc} for more details.

Hence the map of filtrations (\ref{thatmap}) induces on $i$-graded pieces the canonical map from
$$(L\widehat{\Omega}_{A/R})^{\wedge}_p[2(i-j)]\widehat{\otimes}_R \pi_{2j} THH(S,\bz_p)[2j]$$
to
$$R\mathrm{lim}_k \left((L\Omega^{<k}_{A/R})^{\wedge}_p[2(i-j)]\widehat{\otimes}_R \pi_{2j} THH(S,\bz_p)[2j]\right)\simeq \mathrm{gr}_{\mathcal{Z}_S}^j\mathrm{gr}^{i}_{BMS}TP(A\widehat{\otimes}_R S,\bz_p)$$
where the limit is taken with respect to $k\geq 0$.\\

\end{proof}

\begin{lem}\label{lemflat}
Let $R,A,S$ as in Proposition \ref{prop1}. Let $M\in D(A)$ and $N\in D(S)$ be $p$-completely flat over $A$ and $S$ respectively. Then $M\widehat{\otimes}_R N \in D(A)$ is $p$-completely flat over $A$. In particular  $M\widehat{\otimes}_R N$ is discrete.
\end{lem}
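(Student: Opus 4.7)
The plan is to verify $p$-complete flatness of $M\widehat\otimes_R N$ over $A$ by working mod~$p$; the ``in particular'' clause (discreteness) will then follow automatically, since a $p$-complete, $p$-completely flat complex is concentrated in degree~$0$.

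First I would observe that tensoring with $A/p$ over $A$ kills the $p$-completion, so that $(M\widehat\otimes_R N)\otimes^L_A A/p$ is canonically equivalent to $\bar M\otimes^L_R N$, where $\bar M:=M\otimes^L_A A/p$ is discrete and flat over $\bar A:=A/p$ by the $p$-complete flatness hypothesis on $M$. Since $\bar M$ is $p$-torsion, the tensor product over $R$ factors through $\bar R:=R/p$, so this rewrites as $\bar M\otimes^L_{\bar R}(N\otimes^L_R R/p)$.

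Next, I would exploit that $R\to S$ is a quasisyntomic cover, hence $p$-completely flat: this makes $S\otimes^L_R R/p$ discrete and equal to $\bar S:=S/p$, and makes $\bar R\to\bar S$ flat. Combined with the $p$-complete flatness of $N$ over $S$, this identifies $N\otimes^L_R R/p$ with $\bar N:=N\otimes^L_S\bar S$, a discrete flat $\bar S$-module, which is therefore also flat over $\bar R$. The whole computation is thus reduced to $\bar M\otimes^L_{\bar R}\bar N$.

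It then remains to see that $\bar M\otimes^L_{\bar R}\bar N$ is discrete and flat over $\bar A$: discreteness is immediate from the $\bar R$-flatness of $\bar N$, and $\bar A$-flatness follows by testing against an $\bar A$-module $X$ and reshuffling the tensor products to $(\bar M\otimes^L_{\bar A} X)\otimes^L_{\bar R}\bar N$, using the flatness of $\bar M$ over $\bar A$ and of $\bar N$ over $\bar R$. The main obstacle I anticipate is bookkeeping rather than content: one has to carefully track the various module structures on $\bar M$ (over $A$, $\bar A$, $R$, $\bar R$) and on $N$ (over $R$, $S$, $\bar S$) and verify that the associativity rearrangements above are justified by the commutation of these structures. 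Once this is in place, the lemma follows.
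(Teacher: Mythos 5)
Your argument is correct and follows essentially the same route as the paper's proof: reduce modulo $p$ (where the $p$-completion becomes invisible), base-change the tensor product to $R/pR$, and conclude flatness over $A/pA$ from the flatness of $M\otimes^L_A A/pA$ over $A/pA$ together with the flatness of $N\otimes^L_S S/pS$ over $R/pR$, the discreteness claim then being the standard consequence of derived $p$-completeness plus $p$-complete flatness. The only cosmetic difference is that the paper isolates the fact that $N$ is $p$-completely flat over $R$ as a separate statement (Lemma \ref{trivlemma}), whereas you re-derive it inline by factoring the reduction of $N$ through $S/pS$.
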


\begin{proof}
Note that $N\in D(R)$ is $p$-completely flat over $R$ since $R\rightarrow S$ is $p$-completely flat and $N\in D(S)$ is $p$-completely flat over $S$, see Lemma \ref{trivlemma}. We need to show that $M\widehat{\otimes}_R N\otimes^L_{A}A/pA$ is concentrated in degree $0$ and given by a flat  $A/pA$-module. We have
\begin{eqnarray*}
M\widehat{\otimes}_R N\otimes^L_{A}A/pA&=& (M\otimes^L_R N)^{\wedge}_p\otimes^L_{A}A/pA \\
&\simeq& N\otimes^L_R M\otimes^L_{A}A/pA\\
&\simeq& N\otimes^L_R M/pM\\
&\simeq& N\otimes^L_RR/pR \otimes^L_{(R/pR)} M/pM\\
&\simeq& N/pN \otimes^L_{(R/pR)} M/pM\\
&\simeq& N/pN \otimes_{(R/pR)} M/pM
\end{eqnarray*}
which is indeed concentrated in degree $0$. Moreover $N/pN \otimes_{(R/pR)} M/pM$ is a flat $(A/pA)$-module, as $M/pM$ is a flat $(A/pA)$-module and $N/pN$ is a flat $(R/pR)$-module.  
\end{proof}

\begin{rem}\label{rem-marc}
Let $R,A,S$ as in Proposition \ref{prop1}.  We set $N:=\pi_{2j}THH(S,\bz_p)$. Recall that $N$ is derived $p$-complete and $p$-completely flat over $R$. Let $P_{\bullet}\rightarrow A$ be a simplicial resolution of $A$ by flat $R$-algebras. We consider the cyclic simplicial $R$-module 
$$\appl{\Lambda^{\mathrm{op}}}{\mathrm{Fun}(\Delta^{\mathrm{op}},\mathrm{Mod}_R)}{\mathrm{[}n\mathrm{]}}{P_{\bullet}^{\otimes_{R}{n+1}}}$$
which we see as a cyclic object in $\mathrm{Ch}_{\geq 0}(R)$, using the unnormalized chain complex functor. Here $\Lambda$ denotes the cyclic category and $\mathrm{Mod}_R$ denotes the category of usual (i.e. discrete) $R$-modules. Let $(C_*(A/R),b)$ be the total complex of the corresponding double chain complex, so that $(C_*(A/R),b)\simeq HH(A/R)$ which is concentrated in even homological degrees, and let $\mathrm{BP}(A/R)$ be the associated periodic bicomplex, see \cite[Section 2]{Hoyois15}. Let $\mathrm{Tot}(\mathrm{BP}(A/R))$ be the associated total complex with respect to the direct product. We have $\mathrm{Tot}(\mathrm{BP}(A/R))\simeq HP(A/R)$ by \cite[Theorem 2.1]{Hoyois15}. Hence we have 
$$\mathrm{Tot}(\mathrm{BP}(A/R)^{\wedge}_p) \simeq \mathrm{Tot}(\mathrm{BP}(A/R))^{\wedge}_p\simeq HP(A/R,\bz_p).$$
Then $\mathrm{Tot}(\mathrm{BP}(A/R)^{\wedge}_p)$ has a decreasing complete exhaustive filtration $F^*$ with graded pieces 
\begin{equation}\label{gr1}
\mathrm{gr}^n_F\mathrm{Tot}(\mathrm{BP}(A/R)^{\wedge}_p)\simeq HH(A/R,\bz_p)[-2n].
\end{equation}
Hence the complex $\mathrm{Tot}(\mathrm{BP}(A/R)^{\wedge}_p)\widehat{\otimes}_RN$ has a decreasing, possibly non-complete, exhaustive filtration with graded pieces
$$\left(\mathrm{gr}^n_F\mathrm{Tot}(\mathrm{BP}(A/R)^{\wedge}_p)\right) \widehat{\otimes}_RN\simeq HH(A/R,\bz_p)[-2n] \widehat{\otimes}_RN.$$
Similarly, we consider the cyclic simplicial $R$-module 
$$\appl{\Lambda^{\mathrm{op}}}{\mathrm{Fun}(\Delta^{\mathrm{op}},\mathrm{Mod}_R)}{\mathrm{[}n\mathrm{]}}{(P_{\bullet}^{\otimes_{R}{n+1}})^{\wedge}_p\widehat{\otimes}_RN}$$
with associated total chain complex $(C_*(A/R)^{\wedge}_p,b)\widehat{\otimes}_RN$. Note that each $(P_{i}^{\otimes_{R}{n+1}})^{\wedge}_p\widehat{\otimes}_RN$ is $p$-completely flat over $R$, hence concentrated in degree $0$. 
We consider the corresponding periodic bicomplex $\mathrm{BP}(A/R)^{\wedge}_p\widehat{\otimes}_RN$ and its total complex 
$$\mathrm{Tot}(\mathrm{BP}(A/R)^{\wedge}_p\widehat{\otimes}_RN)\simeq (HH(A/R,\bz_p)\widehat{\otimes}_RN)^{t\bt}$$ 
with respect to the direct product.  Then $\mathrm{Tot}(\mathrm{BP}(A/R)^{\wedge}_p\widehat{\otimes}_RN)$ has a decreasing complete exhaustive filtration $F^*$ with graded pieces 
\begin{equation}\label{gr2}
\mathrm{gr}^n_F\mathrm{Tot}(\mathrm{BP}(A/R)^{\wedge}_p\widehat{\otimes}_RN)\simeq HH(A/R,\bz_p)[-2n]\widehat{\otimes}_RN.
\end{equation}
There is a canonical map of exhaustive filtrations with complete target
$$\left(F^*\mathrm{Tot}(\mathrm{BP}(A/R)^{\wedge}_p)\right) \widehat{\otimes}_RN\longrightarrow F^*\mathrm{Tot}\left(\mathrm{BP}(A/R)^{\wedge}_p \widehat{\otimes}_RN\right)$$
which induces equivalences on graded pieces. Moreover, the complexes (\ref{gr1}) and (\ref{gr2}) are concentrated in even homological degrees for any $n$, hence the associated spectral sequence degenerates.
Therefore, for any fixed $i\in\bz$,  
we obtain a map of decreasing exhaustive filtrations with complete target
$$\left(\pi_{2i}F^*\mathrm{Tot}(\mathrm{BP}(A/R)^{\wedge}_p)\right) \widehat{\otimes}_RN\longrightarrow \pi_{2i}F^*\mathrm{Tot}\left(\mathrm{BP}(A/R)^{\wedge}_p \widehat{\otimes}_RN\right)$$
inducing equivalences on graded pieces, so that the filtration on the right hand side may be identified with the completion of the filtration on the left hand side.
\end{rem}

\begin{condition}\label{condlimtens}
Let $R\in \mathrm{QSyn}_{\bz_p}$ be a quasisyntomic ring. Consider the following condition: For any $j\geq 0$, the functor
$$
\appl{D(R)^{\mathrm{Cpl}(p)}}{D(R)^{\mathrm{Cpl}(p)}}{K}{K\widehat{\otimes}_R\,\mathrm{gr}^j_{BMS}THH(R,\bz_p)}
$$
commutes with small limits.
\end{condition} 
\begin{example}\label{exampleperfect}
Any perfectoid $R$ satisfies Condition \ref{condlimtens} by \cite[Theorem 6.1]{BMS}, and so does $\bz_p$ by Corollary \ref{key}.  
\end{example}
\begin{rem}\label{rem-dualizable}
Let $R\in \mathrm{QSyn}_{\bz_p}$ be a quasisyntomic ring such that $\mathrm{gr}^j_{BMS}THH(R,\bz_p)\in D(R)$ is perfect \cite[Definition 7.2.4.1]{LurieHA} for any $j\geq 0$. It follows from \cite[Proposition 7.2.4.4, Proposition 1.4.4.4]{LurieHA} that $R$ satisfies  Condition \ref{condlimtens}.
\end{rem}

\begin{prop}\label{filtQR}
Let $R\in \mathrm{QSyn}_{\bz_p}$ satisfying Condition \ref{condlimtens}, and let $A\in \mathrm{qrsPerfd}_{R}$. If $R$ is perfectoid or $R=\bz_p$, we allow more generally $A\in \mathrm{QRSPerfd}_{R}$. 
\begin{enumerate}
\item There is an $\bn^{\mathrm{op}}$-indexed decreasing multiplicative complete exhaustive $\bt$-equivariant filtration 
$\mathcal{Z}_R^*\mathrm{Fil}^*_{BMS} THH(A,\bz_p)$
on $\mathrm{Fil}^*_{BMS}THH(A,\bz_p)$
 endowed with equivalences
$$\mathrm{gr}^j_{\mathcal{Z}_R} \mathrm{Fil}^{*}_{BMS}THH(A,\bz_p)\simeq \left( \mathrm{Fil}^{*-j}_{BMS}HH(A/R,\bz_p)\right)\widehat{\otimes}_{R}\mathrm{gr}^{j}_{BMS}THH(R,\bz_p).$$

\item  For $?=P,C^-$, there is an  $\bn^{\mathrm{op}}$-indexed decreasing multiplicative complete exhaustive filtration 
$\mathcal{Z}_R^*\mathrm{Fil}^*_{BMS} T?(A,\bz_p)$ 
on $\mathrm{Fil}^*_{BMS}T?(A,\bz_p)$ 
 endowed with equivalences
$$\mathrm{gr}^j_{\mathcal{Z}_R} \mathrm{Fil}^{*}_{BMS}T?(A,\bz_p)\simeq \left( \mathrm{Fil}^{*-j}_{BMS}H?(A/R,\bz_p)\right)\widehat{\otimes}_{R}\mathrm{gr}^{j}_{BMS}THH(R,\bz_p).$$
\end{enumerate}
\end{prop}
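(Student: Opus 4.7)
I will construct the bifiltration by quasisyntomic descent on the base ring $R$, using Proposition \ref{prop1} as the local input over a perfectoid cover. By \cite[Lemma 4.27]{BMS}, choose a quasisyntomic cover $R\to S$ with $S$ a perfectoid ring, and form its \v{C}ech nerve $S^{(\bullet+1)} := S^{\widehat{\otimes}_R(\bullet+1)}$. This is a cosimplicial object in $\mathrm{QRSPerfd}_R$, each $R\to S^{(n+1)}$ is a quasisyntomic cover, and $A\widehat{\otimes}_R S^{(n+1)}$ again lies in $\mathrm{QRSPerfd}_R$. The presheaves $\mathrm{Fil}^*_{BMS}T?(-,\bz_p)$ and $\mathrm{gr}^j_{BMS}T?(-,\bz_p)$ are sheaves on the quasisyntomic site by \cite[Sections 5, 7]{BMS}, so they descend along the \v{C}ech nerve.

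\textbf{Construction.} For each $n$, Proposition \ref{prop1} applied with ``$A$''$=A$ and ``$S$''$=S^{(n+1)}$ produces a functorial $\bn^{\mathrm{op}}$-indexed, multiplicative, complete, $\bt$-equivariant bifiltration $\mathcal{Z}_{S^{(n+1)}}^*\mathrm{Fil}^*_{BMS}THH(A\widehat{\otimes}_R S^{(n+1)},\bz_p)$ in $\widehat{DBF}(\bs[\bt])$, together with its $TP$ and $TC^-$ analogues in $\widehat{DBF}(\bs)$. I then define
$$\mathcal{Z}_R^*\mathrm{Fil}^*_{BMS}THH(A,\bz_p) := \mathrm{Tot}_\Delta\,\mathcal{Z}_{S^{(\bullet+1)}}^*\mathrm{Fil}^*_{BMS}THH(A\widehat{\otimes}_R S^{(\bullet+1)},\bz_p),$$
and similarly for $T?$. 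Since the totalization is a limit in the symmetric monoidal stable $\infty$-category $\widehat{DBF}(\bs[\bt])$, multiplicativity, $\bt$-equivariance, and completeness are inherited levelwise. Descent for $\mathrm{Fil}^*_{BMS}THH(-,\bz_p)$ on $\mathrm{qrsPerfd}_R^{\mathrm{op}}$ identifies $\mathrm{gr}^0_{\mathcal{Z}_R}$ of this construction with $\mathrm{Fil}^*_{BMS}THH(A,\bz_p)$, so we do produce a bifiltration on the latter. The $\bn^{\mathrm{op}}$-indexing and exhaustiveness in both variables descend from Proposition \ref{prop1} once uniform connectivity bounds on the levels are used to commute $\mathrm{Tot}$ with the colimits defining exhaustiveness.

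\textbf{Graded pieces.} Exactness of $\mathrm{gr}^j_{\mathcal{Z}}$ combined with Proposition \ref{prop1}(1) gives
$$\mathrm{gr}^j_{\mathcal{Z}_R}\mathrm{Fil}^*_{BMS}THH(A,\bz_p) \simeq \mathrm{Tot}_n\,\bigl(\mathrm{Fil}^{*-j}_{BMS}HH(A/R,\bz_p)\widehat{\otimes}_R \mathrm{gr}^j_{BMS}THH(S^{(n+1)},\bz_p)\bigr).$$
Descent for $\mathrm{gr}^j_{BMS}THH(-,\bz_p)$ on $\mathrm{qrsPerfd}_R^{\mathrm{op}}$ yields $\mathrm{Tot}_n\,\mathrm{gr}^j_{BMS}THH(S^{(n+1)},\bz_p) \simeq \mathrm{gr}^j_{BMS}THH(R,\bz_p)$, so the target formula reduces to swapping $\mathrm{Tot}_n$ past the completed tensor product against the fixed filtered factor $K := \mathrm{Fil}^{*-j}_{BMS}HH(A/R,\bz_p)$. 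The $T?$-case proceeds identically from Proposition \ref{prop1}(2), with the description of the graded pieces as inverse limits of Hodge-filtered cofibers carried through the totalization.

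\textbf{Main obstacle.} The crux is precisely this totalization-tensor swap, and this is exactly what Condition \ref{condlimtens} is designed to furnish. By symmetry of $\widehat{\otimes}_R$, the required swap says that $(-)\widehat{\otimes}_R K$ preserves the specific limit presenting $\mathrm{gr}^j_{BMS}THH(R,\bz_p)$. Setting $N := \mathrm{gr}^j_{BMS}THH(R,\bz_p)$, Condition \ref{condlimtens} makes $(-)\widehat{\otimes}_R N$ a limit-preserving endofunctor of $D(R)^{\mathrm{Cpl}(p)}$, which in this presentable stable symmetric monoidal setting amounts to dualizability of $N$ (cf.~\cite[Propositions 7.2.4.4, 1.4.4.4]{LurieHA} and Remark \ref{rem-dualizable}); the same perfectness argument applies to each $\mathrm{gr}^j_{BMS}THH(S^{(n+1)},\bz_p)$ via Example \ref{exampleperfect}. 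Duality converts the limit presentation of $N$ into a colimit presentation of $N^\vee$, and the formal chain
$$K\widehat{\otimes}_R N \simeq \mathrm{RHom}(N^\vee,K) \simeq \lim_n\mathrm{RHom}(\mathrm{gr}^j_{BMS}THH(S^{(n+1)},\bz_p)^\vee,K) \simeq \lim_n\bigl(K\widehat{\otimes}_R \mathrm{gr}^j_{BMS}THH(S^{(n+1)},\bz_p)\bigr)$$
gives the required commutation. The most delicate step I anticipate is arranging this dualizability argument carefully in the filtered setting $\widehat{DF}(\bs)$ where $K$ lives, and checking that levelwise dualizability assembles compatibly with Condition \ref{condlimtens} under the hypotheses on $R$.
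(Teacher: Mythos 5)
Your overall route is the paper's: descend along the \v{C}ech nerve $S^{\bullet}$ of a quasisyntomic cover of $R$, apply Proposition \ref{prop1} levelwise, and take $\mathrm{lim}_{\Delta}$ in $\widehat{DBF}(\bs[\bt])$ resp. $\widehat{DBF}(\bs)$ (two small caveats: for general $R\in\mathrm{QSyn}_{\bz_p}$ you cannot insist that $S$ be perfectoid, only $S\in\mathrm{qrsPerfd}_R$, which is all Proposition \ref{prop1} needs; and you should also record that a map of covers $S\to S'$ induces an equivalence of the resulting bifiltrations, so that the construction can be made independent of $S$, e.g.\ by a colimit over covers). The genuine gap is in your ``main obstacle'' paragraph, i.e.\ in the one step that actually carries the content: commuting $\widehat{\otimes}_R$ past the totalization. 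Your formal chain $K\widehat{\otimes}_R N\simeq \mathrm{RHom}(N^{\vee},K)\simeq \lim_n \mathrm{RHom}(\mathrm{gr}^j_{BMS}THH(S^{(n+1)},\bz_p)^{\vee},K)\simeq \lim_n (K\widehat{\otimes}_R \mathrm{gr}^j_{BMS}THH(S^{(n+1)},\bz_p))$ breaks at two places. The second equivalence needs $N^{\vee}\simeq \mathrm{colim}_{\Delta^{\mathrm{op}}}$ of the duals of the terms, and ``dual of a limit equals colimit of duals'' is not a formal consequence of $N\simeq\lim_{\Delta}\mathrm{gr}^j_{BMS}THH(S^{\bullet},\bz_p)$, even if $N$ is dualizable. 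The third equivalence needs each $\mathrm{gr}^j_{BMS}THH(S^{(n+1)},\bz_p)$ to be dualizable \emph{over $R$}, which is false: these are (twists of shifts of) modules over the huge rings $S^{(n+1)}$, nothing like perfect $R$-complexes; Example \ref{exampleperfect} only says that a perfectoid ring (or $\bz_p$) satisfies Condition \ref{condlimtens} \emph{as a base over itself}, not that its $THH$-graded pieces are $R$-dualizable. Moreover the swap you actually need is against $K=\mathrm{Fil}^{*-j}_{BMS}HH(A/R,\bz_p)$, which is not dualizable either, so Condition \ref{condlimtens} (which concerns $(-)\widehat{\otimes}_R\,\mathrm{gr}^j_{BMS}THH(R,\bz_p)$) cannot be converted into the statement you want by ``symmetry''.

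What makes the commutation with $\mathrm{lim}_{\Delta}$ work in the paper is not dualizability but $p$-complete flatness: by Lemma \ref{lemflat} each term $(L\Lambda^{n-j}L_{A/R})^{\wedge}_p[j-n]\widehat{\otimes}_R\pi_{2j}THH(S^{(n+1)},\bz_p)$ is discrete, so the cosimplicial diagrams are levelwise coconnective and the totalization stabilizes degreewise, after which tensoring with the ($p$-completely flat, discrete) factor commutes; the filtration-level identification then follows because one has a map of \emph{complete} filtrations inducing equivalences on graded pieces. Condition \ref{condlimtens} enters elsewhere: it guarantees that $(-)\widehat{\otimes}_R\,\mathrm{gr}^j_{BMS}THH(R,\bz_p)$ preserves small limits, which is used (i) to see that the target filtrations $\mathrm{Fil}^{*-j}_{BMS}H?(A/R,\bz_p)\widehat{\otimes}_R\mathrm{gr}^j_{BMS}THH(R,\bz_p)$ are complete, and (ii) in the $TP$/$TC^-$ case, to pass the tensor through the inverse limit $R\mathrm{lim}_k$ over the Hodge truncations appearing in Proposition \ref{prop1}(2) (recall that there one only has maps at the filtration level, with graded pieces described as such limits), so as to identify the graded piece with $(L\widehat{\Omega}_{A/R})^{\wedge}_p\widehat{\otimes}_R\mathrm{gr}^j_{BMS}THH(R,\bz_p)$. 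Your sketch treats the $T?$-case as ``identical'' and thereby skips exactly this point. So the architecture is right, but the mechanism you propose for the crucial limit--tensor interchange does not work as stated and needs to be replaced by the flatness/evenness argument (or an equivalent one).
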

\begin{proof}
(1) Let $R\rightarrow S$ be a quasisyntomic cover with $S\in \mathrm{QRSPerfd}_{R}$ and  let $S^{\bullet}$ be its Cech nerve. Then  $S^{i}\in \mathrm{QRSPerfd}_{R}$ and $R\rightarrow S^{i}$ is a quasisyntomic cover for all $i\geq 0$. Moreover
the cosimplicial ring $A\widehat{\otimes}_R S^{\bullet}$ is isomorphic to the Cech nerve $(A\widehat{\otimes}_RS)^{\bullet}$ of the quasisyntomic cover $A\rightarrow A\widehat{\otimes}_RS$. Hence we have
$$THH(A,\bz_p)\simeq \mathrm{lim}_{\Delta}(THH(A\widehat{\otimes}_R S^{\bullet} ,\bz_p))$$
and 
$$\mathrm{Fil}^*_{BMS} THH(A,\bz_p)\simeq \mathrm{lim}_{\Delta}(\mathrm{Fil}^*_{BMS}THH(A\widehat{\otimes}_R S^{\bullet},\bz_p)),$$
where the limit is computed with respect to the cosimplicial direction.
We consider the cosimplicial bifiltration $\mathcal{Z}_{S^{\bullet}}^*\mathrm{Fil}^*_{BMS}THH(A\widehat{\otimes}_R S^{\bullet},\bz_p)$.
We obtain a bicomplete biexhaustive bifiltration
$$\mathcal{Z}_{S/R}^*\mathrm{Fil}^*_{BMS} THH(A,\bz_p):=\mathrm{lim}_{\Delta}\left(\mathcal{Z}_{S^{\bullet}}^*\mathrm{Fil}^*_{BMS}THH(A\widehat{\otimes}_R S^{\bullet},\bz_p)\right)$$
with graded pieces 
\begin{eqnarray}
&&\mathrm{gr}^j_{\mathcal{Z}_{S/R}} \mathrm{gr}^{n}_{BMS}THH(A,\bz_p)\\
&\simeq &\mathrm{lim}_{\Delta}\left(\mathrm{gr}^j_{\mathcal{Z}_{S^{\bullet}}}\mathrm{gr}^n_{BMS}THH(A\widehat{\otimes}_R S^{\bullet},\bz_p)\right)\\
&\simeq &\mathrm{lim}_{\Delta}\left((L\Lambda^{n-j}L_{A/R})^{\wedge}_p\widehat{\otimes}_{R}\pi_{2j}THH(S^{\bullet},\bz_p)[2j]\right)[n-j]\\
\label{eqere}&\stackrel{\sim}{\leftarrow} &(L\Lambda^{n-j}L_{A/R})^{\wedge}_p\widehat{\otimes}_{R}\left(\mathrm{lim}_{\Delta}(\pi_{2j}THH(S^{\bullet},\bz_p)[2j])\right)[n-j]\\
&\simeq &(L\Lambda^{n-j}L_{A/R})^{\wedge}_p\widehat{\otimes}_{R}\mathrm{gr}^{j}_{BMS}THH(R,\bz_p)[n-j].
\end{eqnarray}
In order to check that (\ref{eqere}) is indeed an equivalence, we note that each term $$(L\Lambda^{n-j}L_{A/R})^{\wedge}_p[j-n]\widehat{\otimes}_{R}\pi_{2j}THH(S^{i},\bz_p)$$ is concentrated in degree $0$ by Lemma \ref{lemflat}.
We obtain a map of complete filtrations
\begin{eqnarray*}
\mathrm{gr}^j_{\mathcal{Z}_{S/R}} \mathrm{Fil}^{*}_{BMS}THH(A,\bz_p)&\simeq & \mathrm{lim}_{\Delta}\left(\mathrm{gr}^j_{\mathcal{Z}_{S^{\bullet}}} \mathrm{Fil}^{*}_{BMS}THH(A\widehat{\otimes}_R S^{\bullet},\bz_p)\right)\\
&\simeq &\mathrm{lim}_{\Delta}\left(\mathrm{Fil}^{*-j}_{BMS}HH(A/R,\bz_p)\widehat{\otimes}_{R}\pi_{2j}THH(S^{\bullet},\bz_p)[2j]\right)\\
&\stackrel{\sim}{\leftarrow} &\mathrm{Fil}^{*-j}_{BMS}HH(A/R,\bz_p)\widehat{\otimes}_{R}\left(\mathrm{lim}_{\Delta}(\pi_{2j}THH(S^{\bullet},\bz_p)[2j])\right)\\
&\simeq &\mathrm{Fil}^{*-j}_{BMS}HH(A/R,\bz_p)\widehat{\otimes}_{R}\mathrm{gr}^{j}_{BMS}THH(R,\bz_p).
\end{eqnarray*}
which is an equivalence as it induces equivalences on graded pieces. These two filtrations are indeed complete since a limit of complete filtrations is complete, and since $(-)\widehat{\otimes}_{R}\mathrm{gr}^{j}_{BMS}THH(R,\bz_p)$ commutes with arbitrary small limits by Corollary \ref{key} and Condition \ref{condlimtens}.

The bifiltration $\mathcal{Z}_{S/R}^*\mathrm{Fil}^*_{BMS} THH(A,\bz_p)$ is functorial in $S$. More precisely, if $S\rightarrow S'$ is a morphism of quasisyntomic covers of $R$ such that $S,S'\in\mathrm{QRSPerfd}_{R}$, then we have a morphism
of bicomplete bifiltrations 
\begin{equation*}\label{biiso}
\mathcal{Z}_{S/R}^*\mathrm{Fil}^*_{BMS} THH(A,\bz_p)\rightarrow \mathcal{Z}_{S'/R}^*\mathrm{Fil}^*_{BMS} THH(A,\bz_p)
\end{equation*}
which is an equivalence, as it induces equivalences on graded pieces. We define
$$\mathcal{Z}_{R}^*\mathrm{Fil}^*_{BMS} THH(A,\bz_p):=\mathrm{colim}_{S/R} \left(\mathcal{Z}_{S/R}^*\mathrm{Fil}^*_{BMS} THH(A,\bz_p) \right)$$
where the colimit is taken over the category of quasisyntomic covers $R\rightarrow S$ with $S\in\mathrm{QRSPerfd}_{R}$.

(2) We give the argument for $TP$; the proof for $TC^-$ is the same. Let $R\rightarrow S$ be a quasisyntomic cover with $S\in \mathrm{QRSPerfd}_{R}$. We define the bifiltration
$$\mathcal{Z}_{S/R}^*\mathrm{Fil}^*_{BMS} TP(A,\bz_p):=\mathrm{lim}_{\Delta}\left(\mathcal{Z}_{S^{\bullet}}^*\mathrm{Fil}^*_{BMS}TP(A\widehat{\otimes}_R S^{\bullet},\bz_p)\right)$$
with graded pieces 
\begin{eqnarray}
&&\mathrm{gr}^j_{\mathcal{Z}_{S/R}} \mathrm{gr}^{n}_{BMS}TP(A,\bz_p)\\
&\simeq &\mathrm{lim}_{\Delta}\, \mathrm{lim}_k\left((L\Omega^{<k}_{A/R})^{\wedge}_p[2(n-j)]\widehat{\otimes}_{R}\pi_{2j}THH(S^{\bullet},\bz_p)[2j]\right)\\
&\simeq & \mathrm{lim}_k\, \mathrm{lim}_{\Delta}\left((L\Omega^{<k}_{A/R})^{\wedge}_p[2(n-j)]\widehat{\otimes}_{R}\pi_{2j}THH(S^{\bullet},\bz_p)[2j]\right)\\
\label{eqere2}&\stackrel{\sim}{\leftarrow}& \mathrm{lim}_k\left((L\Omega^{<k}_{A/R})^{\wedge}_p[2(n-j)]\widehat{\otimes}_{R} \, \mathrm{lim}_{\Delta}(\pi_{2j}THH(S^{\bullet},\bz_p)[2j])\right)\\
&\simeq & \mathrm{lim}_k\left((L\Omega^{<k}_{A/R})^{\wedge}_p[2(n-j)]\widehat{\otimes}_{R} \, \mathrm{gr}^{j}_{BMS}THH(R,\bz_p)\right)\\
\label{eqere3}&\stackrel{\sim}{\leftarrow} &(L\widehat{\Omega}_{A/R})^{\wedge}_p\widehat{\otimes}_{R} \mathrm{gr}^{j}_{BMS}THH(R,\bz_p)[2(n-j)].
\end{eqnarray}
One checks that (\ref{eqere2}) is an equivalence using the Hodge filtration and the fact that (\ref{eqere}) is an equivalence. Moreover,  (\ref{eqere3}) is an equivalence since  $(-)\widehat{\otimes}_{R}\mathrm{gr}^{j}_{BMS}THH(R,\bz_p)$ commutes with arbitrary limits by Corollary \ref{key} and Condition \ref{condlimtens}.

We obtain a morphism of complete filtrations
\begin{eqnarray*}
&&\mathrm{gr}^j_{\mathcal{Z}_{S/R}} \mathrm{Fil}^{*}_{BMS}TP(A,\bz_p)\\
&\simeq &\mathrm{lim}_{\Delta} \left(\mathrm{gr}^j_{\mathcal{Z}_{S^{\bullet}}} \mathrm{Fil}^{*}_{BMS}TP(A\widehat{\otimes}_R S^{\bullet},\bz_p)\right)\\
&\leftarrow &\mathrm{lim}_{\Delta}\left(\mathrm{Fil}^{*-j}_{BMS}HP(A/R,\bz_p)\widehat{\otimes}_{R}\pi_{2j}THH(S^{\bullet},\bz_p)[2j]\right)\\
&\leftarrow &\left(\mathrm{Fil}^{*-j}_{BMS}HP(A/R,\bz_p) \right)\widehat{\otimes}_{R} \mathrm{lim}_{\Delta}\left(\pi_{2j}THH(S^{\bullet},\bz_p)[2j]\right)\\
&\simeq &\left(\mathrm{Fil}^{*-j}_{BMS}HP(A/R,\bz_p) \right)\widehat{\otimes}_{R} \mathrm{gr}^{j}_{BMS}THH(R,\bz_p).
\end{eqnarray*}
which is an equivalence since it induces equivalences on graded pieces. We define
$$\mathcal{Z}_{R}^*\mathrm{Fil}^*_{BMS} TP(A,\bz_p):=\mathrm{colim}_{S/R} \left(\mathcal{Z}_{S/R}^*\mathrm{Fil}^*_{BMS} TP(A,\bz_p) \right)$$
where the colimit is taken over the category of quasisyntomic covers $R\rightarrow S$ with $S\in\mathrm{QRSPerfd}_{R}$. 

\end{proof}

\begin{fff-proof} Assume that $R$ is perfectoid or $R=\bz_p$, and $A\in \mathrm{QSyn}_{R}$. The filtration $\mathrm{Fil}^*_{BMS} THH(-,\bz_p)$ (resp. $\mathrm{Fil}^*_{BMS} T?(-,\bz_p)$) is  a $\widehat{DF}(\bs[\bt])$-valued (resp. $\widehat{DF}(\bs)$-valued) sheaf  on $\mathrm{QRSPerfd}^{\mathrm{op}}_{R}$. For $?=HH,P,C^-$, 
$\mathcal{Z}_{R}^*\mathrm{Fil}^*_{BMS} T?(-,\bz_p)$ is an $\bn^{\mathrm{op}}$-indexed multiplicative complete exhaustive filtration on the sheaf $\mathrm{Fil}^*_{BMS} T?(-,\bz_p)$ on $\mathrm{QRSPerfd}^{\mathrm{op}}_{R}$, by Proposition \ref{filtQR}. Hence the filtration $$\mathcal{Z}_{R}^*\mathrm{Fil}^*_{BMS} T?(A,\bz_p):=R\Gamma_{\mathrm{syn}}(A,\mathcal{Z}_{R}^*\mathrm{Fil}^*_{BMS} T?(-,\bz_p))$$ is also $\bn^{\mathrm{op}}$-indexed, multiplicative, complete and exhaustive (and $\bt$-equivariant for $?=HH$). For $?=P,C^-$, we have
\begin{eqnarray*}
&&\mathrm{gr}^j_{\mathcal{Z}_{R}} \mathrm{Fil}^{*}_{BMS}T?(A,\bz_p)\\
&\simeq& R\Gamma_{\mathrm{syn}}\left(A,\mathrm{gr}^j_{\mathcal{Z}_{R}}\mathrm{Fil}^*_{BMS} T?(-,\bz_p)\right)\\
&\simeq& R\Gamma_{\mathrm{syn}}\left(A, \mathrm{Fil}^{*-j}_{BMS}H?(-/R,\bz_p)\widehat{\otimes}_{R} \mathrm{gr}^{j}_{BMS}THH(R,\bz_p)\right)\\
&\simeq& R\Gamma_{\mathrm{syn}}\left(A, \mathrm{Fil}^{*-j}_{BMS}H?(-/R,\bz_p)\right)\widehat{\otimes}_{R} \mathrm{gr}^{j}_{BMS}THH(R,\bz_p)\\
&\simeq& \left(\mathrm{Fil}^{*-j}_{BMS}H?(A/R,\bz_p)\right)\widehat{\otimes}_{R} \mathrm{gr}^{j}_{BMS}THH(R,\bz_p)
\end{eqnarray*}
using Proposition \ref{filtQR} and the fact that $(-)\widehat{\otimes}_{R} \mathrm{gr}^{j}_{BMS}THH(R,\bz_p)$ commutes with small limits by Example \ref{exampleperfect}. The second equivalence of the theorem follows for $R$ perfectoid or $R=\bz_p$; the proof of the first equivalence is the same.

Let $R\in \mathrm{QSyn}_{\bz_p}$ such that $(L\Lambda^{j}L_{R/\bz_p})^{\wedge}_p$ is a perfect complex of $R$-modules for all $j\geq0$. By the previous paragraph, $\mathrm{gr}^{n}_{BMS}THH(R,\bz_p)$ has a finite complete exhaustive filtration $\mathcal{Z}_{\bz_p}^*\mathrm{gr}^n_{BMS} THH(R,\bz_p)$ with $j$-graded piece given by (\ref{kill}), see Corollary \ref{corzp}. Moreover, we have $\mathcal{Z}_{\bz_p}^0\mathrm{gr}^0_{BMS} THH(R,\bz_p)\simeq R$, so that $\mathcal{Z}_{\bz_p}^*\mathrm{gr}^n_{BMS} THH(R,\bz_p)\in \widehat{DF}(R)$ by multiplicativity of the bifiltration $\mathcal{Z}_{\bz_p}^*\mathrm{Fil}^*_{BMS} THH(R,\bz_p)$. Hence $\mathrm{gr}^n_{BMS} THH(R,\bz_p)$ is a perfect complex in $D(R)$, for any $n\geq 0$. Thus by Remark \ref{rem-dualizable}, Proposition \ref{filtQR} applies. Working in the small site $\mathrm{qrsPerfd}^{\mathrm{op}}_{R}$, we obtain the result for any $A\in \mathrm{qSyn}_{R}$ as in the previous paragraph.

\end{fff-proof}

\begin{rem}\label{remforbifilt}
Going through the construction of $\mathcal{Z}_{R}^*$, one checks that the map 
$$\mathcal{Z}_{R}^0\mathrm{Fil}^*_{BMS} THH(A,\bz_p)\rightarrow \mathrm{gr}^0_{\mathcal{Z}_{R}} \mathrm{Fil}^{*}_{BMS}THH(A,\bz_p)$$
is equivalent to the canonical map (see Proposition \ref{mapBMS-T-H})
$$\mathrm{Fil}^*_{BMS} THH(A,\bz_p)\rightarrow \mathrm{Fil}^*_{BMS} HH(A/R,\bz_p)$$
and similarly for $TP(A,\bz_p)$ and $TC^-(A,\bz_p)$. 
\end{rem}

\begin{cor}\label{corzp}
For any  $A\in\mathrm{QSyn}_{\bz_p}$ and any $n\in\bz$, we have $\bn^{op}$-indexed complete exhaustive filtrations on $\mathrm{gr}_{BMS}^n$, endowed canonical equivalences
\begin{eqnarray}
\label{kill}\mathrm{gr}^j_{\mathcal{Z}_{\bz_p}} \mathrm{gr}^{n}_{BMS}THH(A,\bz_p)&\simeq & L\Lambda^{n-j}L_{A/\bz_p}\widehat{\otimes}_{\bz_p}\bz_p/j\bz_p[n+j-\epsilon_j]\\
\mathrm{gr}^j_{\mathcal{Z}_{\bz_p}} \mathrm{gr}^{n}_{BMS}TP(A,\bz_p)&\simeq & L\widehat{\Omega}_{A/\bz_p}\widehat{\otimes}_{\bz_p}\bz_p/j\bz_p[2n-\epsilon_j]\\
\mathrm{gr}^j_{\mathcal{Z}_{\bz_p}} \mathrm{gr}^{n}_{BMS}TC^-(A,\bz_p)&\simeq & L\widehat{\Omega}_{A/\bz_p}^{\geq n-j}\widehat{\otimes}_{\bz_p}\bz_p/j\bz_p[2n-\epsilon_j]
\end{eqnarray}
for any $j\geq 0$ and any $n\in\bz$, where $\epsilon_j:=\mathrm{Min}(1,j)$.
\end{cor}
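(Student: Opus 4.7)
The plan is to derive this corollary as a direct consequence of Theorem \ref{thmpadic} applied with the base ring $R=\bz_p$. Since $L_{\bz_p/\bz_p}\simeq 0$, the perfectness hypothesis $(L\Lambda^{j}L_{R/\bz_p})^{\wedge}_p$ perfect is trivial, so the theorem supplies for $?=HH,P,C^-$ an $\bn^{\mathrm{op}}$-indexed, multiplicative, complete, exhaustive filtration $\mathcal{Z}_{\bz_p}^*\mathrm{Fil}^*_{BMS}T?(A,\bz_p)$ with graded pieces
$$\mathrm{gr}^j_{\mathcal{Z}_{\bz_p}} \mathrm{Fil}^{*}_{BMS}T?(A,\bz_p)\simeq \mathrm{Fil}^{*-j}_{BMS}H?(A/\bz_p,\bz_p)\,\widehat{\otimes}_{\bz_p}\,\mathrm{gr}^{j}_{BMS}THH(\bz_p,\bz_p).$$

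The next step is to apply $\mathrm{gr}^n_{BMS}$ in the second filtration direction. Since $\mathrm{gr}^n_{BMS}:\widehat{DF}(\bs)\to \mathrm{Sp}$ is exact and the associated graded functor commutes with the cofiber sequences defining $\mathcal{Z}_{\bz_p}^*$, the filtration $\mathcal{Z}_{\bz_p}^*$ induces on $\mathrm{gr}^n_{BMS}T?(A,\bz_p)$ a complete exhaustive $\bn^{\mathrm{op}}$-indexed filtration with $j$-graded piece
$$\mathrm{gr}^j_{\mathcal{Z}_{\bz_p}} \mathrm{gr}^{n}_{BMS}T?(A,\bz_p)\simeq \mathrm{gr}^{n-j}_{BMS}H?(A/\bz_p,\bz_p)\,\widehat{\otimes}_{\bz_p}\,\mathrm{gr}^{j}_{BMS}THH(\bz_p,\bz_p).$$

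I now substitute the known computations. Corollary \ref{key} identifies the right factor as $\bz_p$ if $j=0$ and $\bz_p/j\bz_p[2j-1]$ if $j\geq 1$, which is uniformly $\bz_p/j\bz_p[2j-\epsilon_j]$ with the convention $\bz_p/0\bz_p:=\bz_p$. For the left factor, the identification of the BMS-graded pieces recalled in Section \ref{sect-BMS} gives
$(L\Lambda^{n-j}L_{A/\bz_p})^{\wedge}_p[n-j]$, $(L\widehat{\Omega}_{A/\bz_p})^{\wedge}_p[2(n-j)]$ and $(L\widehat{\Omega}^{\geq n-j}_{A/\bz_p})^{\wedge}_p[2(n-j)]$ respectively for $?=HH,P,C^-$. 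For $j\geq 1$ the module $\bz_p/j\bz_p$ is finite (equal to $\bz_p/p^{v_p(j)}\bz_p$), so the completed tensor product coincides with the derived one and the $p$-completion $(-)^{\wedge}_p$ is absorbed by $\otimes^L_{\bz_p}\bz_p/j\bz_p$; for $j=0$ the right factor is $\bz_p$, consistent with the fact that the BMS graded pieces are already $p$-complete.

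Assembling the shifts yields $(n-j)+(2j-\epsilon_j)=n+j-\epsilon_j$ in the $HH$ case and $2(n-j)+(2j-\epsilon_j)=2n-\epsilon_j$ in the $TP,TC^-$ cases, which are exactly the shifts appearing in the statement. The only subtlety is thus bookkeeping the case $j=0$ versus $j\geq 1$, a dichotomy cleanly absorbed by $\epsilon_j=\mathrm{Min}(1,j)$. No essential obstacle arises beyond verifying that Theorem \ref{thmpadic} and Corollary \ref{key} compose in the indicated way.
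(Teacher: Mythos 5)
Your proposal is correct and follows essentially the same route as the paper, whose proof simply cites Theorem \ref{thmpadic} with $R=\bz_p$, Corollary \ref{key}, and the identification of the BMS graded pieces from Section \ref{sect-BMS}. You have merely spelled out the bookkeeping (exactness of $\mathrm{gr}^n$, the completed tensor with $\bz_p/j\bz_p$, and the shifts) that the paper leaves implicit.
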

\begin{proof}
This follows from Theorem \ref{thmpadic}, Corollary \ref{key} and  \cite[Theorem 1.17]{BMS}, see Section \ref{sect-BMS}.
\end{proof}

There is some overlap between (\ref{kill}) and the results of \cite[Sections 5.2, 5.3]{Antieau-Mathew-Morrow-Nikolaus20}. For instance \cite[Corollary 5.21, 5.22]{Antieau-Mathew-Morrow-Nikolaus20} follow from (\ref{kill}). In particular, we have the following
\begin{cor}
For any $A\in \mathrm{QSyn}_{\bz_p}$, we have $\mathrm{gr}^{n}_{BMS}THH(A,\bz_p)\in \mathrm{Sp}_{\geq n}$ and $\mathrm{Fil}^{n}_{BMS}THH(A,\bz_p) \in \mathrm{Sp}_{\geq n}$. The functors $A\mapsto \mathrm{gr}^{n}_{BMS}THH(A,\bz_p)$ and 
$A\mapsto \mathrm{Fil}^{n}_{BMS}THH(A,\bz_p)$ are left Kan extended from finitely generated $p$-complete polynomial $\bz_p$-algebras, as functors to $p$-complete spectra.
\end{cor}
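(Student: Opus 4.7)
The plan is to leverage the filtration $\mathcal{Z}^{*}_{\bz_p}\mathrm{gr}^n_{BMS}THH(A,\bz_p)$ supplied by Corollary \ref{corzp}, whose $j$-th graded piece is $L\Lambda^{n-j}L_{A/\bz_p}\widehat{\otimes}_{\bz_p}\bz_p/j\bz_p[n+j-\epsilon_j]$. Because $L_{A/\bz_p}$ is connective and derived exterior powers preserve connectivity, each such graded piece belongs to $\mathrm{Sp}_{\geq n+j-\epsilon_j}\subseteq\mathrm{Sp}_{\geq n}$, with connectivity tending to infinity in $j$. Since $\mathcal{Z}^{*}_{\bz_p}$ is complete and $\bn^{\mathrm{op}}$-indexed with $\mathcal{Z}^{0}\simeq \mathrm{gr}^n_{BMS}THH(A,\bz_p)$, this forces $\mathrm{gr}^n_{BMS}THH(A,\bz_p)\in \mathrm{Sp}_{\geq n}$. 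The same reasoning applied to the complete BMS filtration itself, whose $i$-th graded piece for $i\geq n$ now lies in $\mathrm{Sp}_{\geq i}$, yields $\mathrm{Fil}^n_{BMS}THH(A,\bz_p)\in \mathrm{Sp}_{\geq n}$.

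For the left Kan extension statement, I first observe that the functor $A\mapsto(L\Lambda^{m}L_{A/\bz_p})^{\wedge}_p$ is left Kan extended from finitely generated $p$-complete polynomial $\bz_p$-algebras, by the very definition of the derived cotangent complex and its derived exterior powers, together with the fact that $p$-completion is a left adjoint and therefore preserves left Kan extensions. Derived tensor product with $\bz_p/j\bz_p$ and shifts likewise preserve this property, so each graded piece $\mathrm{gr}^j_{\mathcal{Z}_{\bz_p}}\mathrm{gr}^n_{BMS}THH(-,\bz_p)$ is left Kan extended as a functor to $p$-complete spectra.

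To pass from graded pieces to the filtered object, I exploit the connectivity estimate of the first paragraph: for any fixed $k\geq n$, one has $\mathcal{Z}^{J}_{\bz_p}\mathrm{gr}^n_{BMS}THH(A,\bz_p)\in \mathrm{Sp}_{\geq k+1}$ as soon as $J\geq k-n+2$, hence $\tau_{\leq k}\mathrm{gr}^n_{BMS}THH(-,\bz_p)\simeq \tau_{\leq k}(\mathcal{Z}^{0}_{\bz_p}/\mathcal{Z}^{J}_{\bz_p})$ is a finite iterated extension of left Kan extended functors, itself left Kan extended. Since $\tau_{\leq k}$ is a left adjoint and therefore preserves left Kan extensions, this shows that $\tau_{\leq k}\mathrm{gr}^n_{BMS}THH(-,\bz_p)$ is left Kan extended for every $k$. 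Because both $\mathrm{gr}^n_{BMS}THH(-,\bz_p)$ and its left Kan extension take values in bounded-below $p$-complete spectra, Postnikov convergence recovers each as $\lim_k \tau_{\leq k}$, so the counit from the left Kan extension to $\mathrm{gr}^n_{BMS}THH(-,\bz_p)$ is an equivalence after every $\tau_{\leq k}$, hence an equivalence. The argument for $\mathrm{Fil}^n_{BMS}THH(-,\bz_p)$ is identical, using the BMS filtration itself in place of $\mathcal{Z}^{*}_{\bz_p}$. The main subtlety is precisely this interchange of left Kan extension with a Postnikov limit, which is why the uniform connectivity-to-infinity provided by Corollary \ref{corzp} is essential.
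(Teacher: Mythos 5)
Your connectivity argument is correct and is the intended route: the graded pieces supplied by Corollary \ref{corzp} lie in $\mathrm{Sp}_{\geq n+j-\epsilon_j}\subseteq\mathrm{Sp}_{\geq n}$ with connectivity tending to infinity, and completeness of $\mathcal{Z}^*_{\bz_p}$ (resp.\ of the BMS filtration, using $\mathrm{gr}^i_{BMS}\in\mathrm{Sp}_{\geq i}$) then gives $\mathrm{gr}^n_{BMS}THH(A,\bz_p)\in\mathrm{Sp}_{\geq n}$ and $\mathrm{Fil}^n_{BMS}THH(A,\bz_p)\in\mathrm{Sp}_{\geq n}$. The overall plan for the Kan extension statement is also the right one, but its execution has a genuine gap: the assertion that ``$\tau_{\leq k}$ is a left adjoint and therefore preserves left Kan extensions'' is false for $\tau_{\leq k}$ as an endofunctor of spectra. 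It commutes with filtered colimits but not with geometric realizations, and left Kan extension from polynomial algebras is computed by sifted colimits; for instance the left Kan extension of $A\mapsto\Omega^1_{A/\bz_p}$ is $A\mapsto L_{A/\bz_p}$, and $\tau_{\leq 0}L_{A/\bz_p}\simeq\Omega^1_{A/\bz_p}$ is not left Kan extended as a spectrum-valued functor. Hence neither the intermediate claim that $\tau_{\leq k}\mathrm{gr}^n_{BMS}THH(-,\bz_p)$ is left Kan extended nor the final assertion that the counit becomes an equivalence after every $\tau_{\leq k}$ is justified as written; the latter is exactly the interchange you flag as the main subtlety.

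The repair uses only what you have already proved, and no truncations. Write $F=\mathrm{gr}^n_{BMS}THH(-,\bz_p)$, $F_J=\mathcal{Z}^0_{\bz_p}/\mathcal{Z}^J_{\bz_p}$, and let $G$ be the left Kan extension of $F$ (in $p$-complete spectra). Each $\mathrm{gr}^j_{\mathcal{Z}_{\bz_p}}$ is left Kan extended, hence so is the finite iterated extension $F_J$. Left Kan extension preserves the fiber sequence $\mathcal{Z}^J_{\bz_p}\rightarrow F\rightarrow F_J$ (fiber sequences are cofiber sequences), and $\mathrm{Sp}_{\geq n+J-1}$ is closed under colimits and under $p$-completion, so both the fiber of $F(A)\rightarrow F_J(A)$ and the fiber of $G(A)\rightarrow F_J(A)$ lie in $\mathrm{Sp}_{\geq n+J-1}$. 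Therefore $G(A)\rightarrow F(A)$ induces isomorphisms on homotopy in any fixed range of degrees once $J$ is large, hence is an equivalence; the identical argument with the BMS filtration in place of $\mathcal{Z}^*_{\bz_p}$ handles $\mathrm{Fil}^n_{BMS}$. Alternatively one can argue as the paper does for the global analogue (Corollary \ref{connectivity}): left Kan extend the whole filtered object, note that the comparison map induces equivalences on graded pieces, and check that the left Kan extended filtration is complete using the same connectivity bound. Finally, the input that $A\mapsto(L\Lambda^mL_{A/\bz_p})^{\wedge}_p$ is left Kan extended from finitely generated $p$-complete polynomial $\bz_p$-algebras is not quite ``by definition'' (the cotangent complex is Kan extended from polynomial, not $p$-complete polynomial, algebras); it requires the base-change equivalence $(L\Lambda^mL_{A/\bz})^{\wedge}_p\simeq(L\Lambda^mL_{A^{\wedge}_p/\bz_p})^{\wedge}_p$ as in Lemma \ref{lem-complete-HH}, or a reference to \cite[Section 5.1]{Antieau-Mathew-Morrow-Nikolaus20}.
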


\subsection{The bifiltration for global rings}\label{SectionBifiltglobal}

Let $A$ be a quasi-lci ring with bounded torsion. We define (trivial) multiplicative complete exhaustive $(\bn^{\mathrm{op}}\times\bz^{\mathrm{op}})$-indexed bifiltrations on Hochschild homology and its variants as follows. We set
$$\mathcal{Z}_{\bz}^0\mathrm{Fil}_{HKR}^*HH(A):=\mathrm{Fil}_{HKR}^*HH(A) \textrm{ and } \mathcal{Z}_{\bz}^j\mathrm{Fil}_{HKR}^*HH(A):=0 \textrm{ for any } j\geq1,$$
and for $?=P,C^-$ we set 
$$\mathcal{Z}_{\bz}^0\mathrm{Fil}_{B}^*H?(A):=\mathrm{Fil}_{B}^*H?(A) \textrm{ and } \mathcal{Z}_{\bz}^j\mathrm{Fil}_{B}^*H?(A):=0 \textrm{ for any } j\geq1.$$
Similarly, for any prime $p$ and $?=H,P,C^-$, we set
$$\mathcal{Z}_{\bz_p}^0\mathrm{Fil}_{BMS}^*H?(A^{\wedge}_p/\bz_p,\bz_p):=\mathrm{Fil}_{BMS}^*H?(A^{\wedge}_p/\bz_p,\bz_p)$$ 
$$\textrm{ and } \mathcal{Z}_{\bz_p}^j\mathrm{Fil}_{BMS}^*H?(A^{\wedge}_p/\bz_p,\bz_p):=0 \textrm{ for any } j\geq1.$$

\begin{defn}\label{global-bifilt}
Let $A$ be a quasi-lci ring with bounded torsion. We define multiplicative bicomplete $(\bn^{\mathrm{op}}\times\bz^{\mathrm{op}})$-indexed bifiltrations
\[ \xymatrix{
\mathcal{Z}_{\bz}^*F^*THH(A)\ar[d]\ar[r]& \mathcal{Z}_{\bz}^*\mathrm{Fil}_{HKR}^*HH(A) \ar[d]\\
\prod_p\mathcal{Z}_{\bz_p}^*\mathrm{Fil}_{BMS}^*THH(A^{\wedge}_p,\bz_p)\ar[r]&\prod_p\mathcal{Z}_{\bz_p}^*\mathrm{Fil}_{BMS}^*HH(A^{\wedge}_p/\bz_p,\bz_p) 
}
\]
and, for $?=P,C^-$
\[ \xymatrix{
\mathcal{Z}_{\bz}^*F^*T?(A)\ar[d]\ar[r]& \mathcal{Z}_{\bz}^*\mathrm{Fil}_{B}^*H?(A) \ar[d]\\
\prod_p\mathcal{Z}_{\bz_p}^*\mathrm{Fil}_{BMS}^*T?(A^{\wedge}_p,\bz_p)\ar[r]&\prod_p\mathcal{Z}_{\bz_p}^*\mathrm{Fil}_{BMS}^*H?(A^{\wedge}_p/\bz_p,\bz_p) 
}
\]
as fiber products of $\mathbb{E}_{\infty}$-algebra objects in the symmetric monoidal $\infty$-categories $\widehat{DBF}(\bs[\bt])$ and $\widehat{DBF}(\bs)$ respectively.

There is an evident morphism $$\mathcal{Z}_{\bz}^*F^*TC^-(A)\rightarrow \mathcal{Z}_{\bz}^*F^*TP(A)$$ of $\mathbb{E}_{\infty}$-algebra objects, and we define $$\mathcal{Z}_{\bz}^*F^*\Sigma^2TC^+(A)\in \widehat{DBF}(\bs)$$ as its cofiber computed in 
$\widehat{DBF}(\bs)$.
\end{defn}

\begin{thm}\label{thmfinal} Let $A$ be a quasi-lci ring with bounded torsion. 
\begin{enumerate}
\item For 
$$?= THH(A), TP(A) ,TC^-(A),$$
the filtration $\mathcal{Z}_{\bz}^*F^*?$ on $F^*?$ is functorial, $\bn^{\mathrm{op}}$-indexed, multiplicative, complete and exhaustive. 

\item We have canonical equivalences
\begin{eqnarray*}
\mathrm{gr}^j_{\mathcal{Z}_{\bz}} F^*THH(A)&\simeq &\left( \mathrm{Fil}_{HKR}^{*-j}HH(A)\right)\otimes^L_{\bz}\mathrm{gr}^{j}_{F}THH(\bz);\\
\mathrm{gr}^j_{\mathcal{Z}_{\bz}} F^*TP(A) &\simeq & \left( \mathrm{Fil}_B^{*-j}HP(A)\right)\otimes^L_{\bz}\mathrm{gr}^{j}_{F}THH(\bz);\\
\mathrm{gr}^j_{\mathcal{Z}_{\bz}} F^*TC^-(A)&\simeq & \left( \mathrm{Fil}_B^{*-j}HC^-(A)\right)\otimes^L_{\bz}\mathrm{gr}^{j}_{F}THH(\bz).
\end{eqnarray*}
\item In particular, there are complete exhaustive $\bn^{\mathrm{op}}$-indexed filtrations  $\mathcal{Z}^*_{\bz} \mathrm{gr}^{n}_{F}THH(A)$, $\mathcal{Z}^*_{\bz} \mathrm{gr}^{n}_{F}TP(A)$ and $\mathcal{Z}^*_{\bz} \mathrm{gr}^{n}_{F}TC^-(A)$ on 
the $H\bz$-modules $\mathrm{gr}^{n}_{F}THH(A)$, $\mathrm{gr}^{n}_{F}TP(A)$ and $\mathrm{gr}^{n}_{F}TC^-(A)$ respectively, endowed with canonical equivalences
\begin{eqnarray*}
\mathrm{gr}^j_{\mathcal{Z}_{\bz}} \mathrm{gr}^{n}_{F}THH(A)&\simeq &L\Lambda^{n-j}L_{A/\bz}\otimes^L_{\bz}\bz/j\bz[n+j-\epsilon_j];\\
\mathrm{gr}^j_{\mathcal{Z}_{\bz}} \mathrm{gr}^{n}_{F}TP(A) &\simeq & L\widehat{\Omega}_{A/\bz}\otimes^L_{\bz}\bz/j\bz[2n-\epsilon_j];\\
\mathrm{gr}^j_{\mathcal{Z}_{\bz}} \mathrm{gr}^{n}_{F}TC^-(A)&\simeq & L\widehat{\Omega}^{\geq n-j}_{A/\bz}\otimes^L_{\bz} \bz/j\bz[2n-\epsilon_j];\\
\mathrm{gr}^j_{\mathcal{Z}_{\bz}} \mathrm{gr}^{n}_{F} \Sigma^2TC^+(A)&\simeq& L\Omega^{<n-j}_{A/\bz}\otimes^L_{\bz}\bz/j\bz[2n-\epsilon_j].
\end{eqnarray*}
for any $j\in\bn$ and any $n\in\bz$, where $\epsilon_j:=\mathrm{Min}(1,j)$.
\end{enumerate}
\end{thm}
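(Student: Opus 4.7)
The plan is to reduce Theorem \ref{thmfinal} to its $p$-adic counterpart (Theorem \ref{thmpadic} applied with $R=\bz_p$, specialized in Corollary \ref{corzp}) by exploiting the pullback definition of $\mathcal{Z}_{\bz}^{*}F^{*}?$ together with the fact that the $\mathcal{Z}$-bifiltration is \emph{trivial} (concentrated at $j=0$) on the HKR and $B$ corners by Definition \ref{global-bifilt}. Since the bifiltration is built as a pullback of $\mathbb{E}_\infty$-algebras in the stable symmetric monoidal $\infty$-category $\widehat{DBF}(\bs)$ (or $\widehat{DBF}(\bs[\bt])$), the multiplicativity, bicompleteness and functoriality asserted in (1) are formal. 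For the $\bn^{\mathrm{op}}$-indexing and exhaustiveness in the $\mathcal{Z}$-direction, I observe that $\mathcal{Z}_{\bz_p}^{0}\mathrm{Fil}^{*}_{BMS}T?(A^{\wedge}_p,\bz_p)\simeq \mathrm{Fil}^{*}_{BMS}T?(A^{\wedge}_p,\bz_p)$ by the construction in Proposition \ref{filtQR} (as recorded in Remark \ref{remforbifilt}), and by definition $\mathcal{Z}_{\bz}^{0}\mathrm{Fil}^{*}_{HKR/B} = \mathrm{Fil}^{*}_{HKR/B}$, so $\mathcal{Z}_{\bz}^{0}F^{*}?$ coincides with the pullback defining $F^{*}?$ and the filtration is constant for $j\leq 0$.

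For parts (2) and (3), I exploit that $\mathrm{gr}^{j}_{\mathcal{Z}_{\bz}}$ is exact and hence commutes with the defining fiber product in the stable category. At $j=0$, the $\mathrm{gr}^{0}_{\mathcal{Z}_{\bz_p}}$ of both the $THH$ and $HH$ corners equal $\mathrm{Fil}^{*}_{BMS}HH(A^{\wedge}_p/\bz_p,\bz_p)$ and the induced map between them is the canonical one (Remark \ref{remforbifilt}), so the pullback collapses onto the HKR corner $\mathrm{Fil}^{*}_{HKR}HH(A)\simeq \mathrm{Fil}^{*}_{HKR}HH(A)\otimes^L_{\bz}\mathrm{gr}^{0}_{F}THH(\bz)$, using $\mathrm{gr}^{0}_{F}THH(\bz)\simeq\bz$ from Corollary \ref{key}. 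For $j\geq 1$ the two upper corners vanish, so
\[
\mathrm{gr}^{j}_{\mathcal{Z}_{\bz}}F^{*}THH(A)\simeq \prod_p\mathrm{gr}^{j}_{\mathcal{Z}_{\bz_p}}\mathrm{Fil}^{*}_{BMS}THH(A^{\wedge}_p,\bz_p)\simeq \prod_p\mathrm{Fil}^{*-j}_{BMS}HH(A^{\wedge}_p/\bz_p,\bz_p)\widehat{\otimes}_{\bz_p}\bz_p/j[2j-1]
\]
by Corollary \ref{corzp}. The bridge from this $p$-adic expression to the claimed global formula $\mathrm{Fil}^{*-j}_{HKR}HH(A)\otimes^L_{\bz}\mathrm{gr}^{j}_{F}THH(\bz)$ uses the general identity $X\otimes^L_{\bz}\bz/j\simeq \prod_p X^{\wedge}_p\otimes^L_{\bz_p}\bz/j$ for $X\in D(\bz)$ (valid since $\bz/j$ is supported at the finitely many primes dividing $j$), combined with Lemma \ref{lem-complete-HH} and Proposition \ref{mapfiltHKR} to identify $(\mathrm{Fil}^{*-j}_{HKR}HH(A))^{\wedge}_p\simeq \mathrm{Fil}^{*-j}_{BMS}HH(A^{\wedge}_p/\bz_p,\bz_p)$; for $TP$ and $TC^-$ the same argument applies with Proposition \ref{prop-FilBpcompleteed} in place of Proposition \ref{mapfiltHKR}.

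Part (3) then follows from (2) by passing to $n$-graded pieces, using the standard descriptions $\mathrm{gr}^{n}_{HKR}HH(A)\simeq L\Lambda^n L_{A/\bz}[n]$, $\mathrm{gr}^{n}_{B}HP(A)\simeq L\widehat{\Omega}_{A/\bz}[2n]$, $\mathrm{gr}^{n}_{B}HC^{-}(A)\simeq L\widehat{\Omega}^{\geq n}_{A/\bz}[2n]$, together with $\mathrm{gr}^{j}_{F}THH(\bz)\simeq\bz/j[2j-1]$ for $j\geq 1$ from Corollary \ref{key}. The formula for $\mathcal{Z}^{*}_{\bz}\mathrm{gr}^{n}_{F}\Sigma^2 TC^+(A)$ then follows from the cofiber sequence $\mathrm{gr}^{n}_{F}TC^{-}(A)\to\mathrm{gr}^{n}_{F}TP(A)\to\mathrm{gr}^{n}_{F}\Sigma^2 TC^+(A)$ by exactness of $\mathrm{gr}^{j}_{\mathcal{Z}_{\bz}}$ and the cofiber sequence $L\widehat{\Omega}^{\geq n-j}_{A/\bz}\to L\widehat{\Omega}_{A/\bz}\to L\Omega^{<n-j}_{A/\bz}$. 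The main obstacle will be the bridge step: carefully checking that the equivalences of Lemma \ref{lem-complete-HH}, Proposition \ref{mapfiltHKR} and Proposition \ref{prop-FilBpcompleteed} respect the filtered multiplicative $\mathbb{E}_\infty$-structures, so that the products of $p$-adic computations from Corollary \ref{corzp} assemble cleanly into the claimed \emph{global} equivalences and the tensor factor $\mathrm{gr}^{j}_{F}THH(\bz)$ genuinely arises from a global object rather than being constructed formally prime-by-prime.
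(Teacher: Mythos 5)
Your proposal follows essentially the same route as the paper's proof: part (1) is formal from the definition, the equivalences in (2) are obtained by applying the exact functor $\mathrm{gr}^j_{\mathcal{Z}_{\bz}}$ to the defining pullback square, using Remark \ref{remforbifilt} at $j=0$ and, for $j\geq 1$, the vanishing of the trivially $\mathcal{Z}$-filtered corners together with Theorem \ref{thmpadic}, Proposition \ref{mapfiltHKR} (resp.\ Proposition \ref{prop-FilBpcompleteed}) and the observation that the resulting $j$-torsion objects coincide with their profinite completions, and (3) follows by passing to graded pieces and the $\Sigma^2TC^+$ cofiber sequence, exactly as in the paper. Two minor remarks: ``the two upper corners vanish'' should read the two corners carrying the trivial $\mathcal{Z}$-filtration (the HKR/$B$ corner and the $\prod_p\mathrm{Fil}^*_{BMS}H?$ corner), and the multiplicativity concern you flag at the end is only needed for the $H\bz$-module (indeed $\widehat{DF}(\bz)$-valued) assertion in (3), not for the equivalences of (2); the paper settles it as you anticipate, via multiplicativity of the bifiltration and the fact that $\mathrm{gr}^0_F TC^-(\bz)$ is an $\mathbb{E}_{\infty}$-$\bz$-algebra.
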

\begin{proof}
Assertion (1) is true by definition, except for the exhaustiveness, which is easy to check.

(2) For any $j\geq 0$, we have a cartesian square in $\widehat{DF}(\bs[\bt])$
\[ \xymatrix{
\mathrm{gr}^j_{\mathcal{Z}_{\bz}}F^*THH(A)\ar[d]\ar[r]& \mathrm{gr}^j_{\mathcal{Z}_{\bz}}\mathrm{Fil}_{HKR}^*HH(A) \ar[d]\\
\prod_p\mathrm{gr}^j_{\mathcal{Z}_{\bz_p}}\mathrm{Fil}_{BMS}^*THH(A^{\wedge}_p,\bz_p)\ar[r]&\prod_p\mathrm{gr}^j_{\mathcal{Z}_{\bz_p}}\mathrm{Fil}_{BMS}^*HH(A^{\wedge}_p/\bz_p,\bz_p) 
}
\]
For $j=0$, this gives the cartesian square
\[ \xymatrix{
\mathrm{gr}^0_{\mathcal{Z}_{\bz}}F^*THH(A)\ar[d]\ar[r]& \mathrm{Fil}_{HKR}^*HH(A) \ar[d]\\
\prod_p\mathrm{gr}^0_{\mathcal{Z}_{\bz_p}}\mathrm{Fil}_{BMS}^*THH(A^{\wedge}_p,\bz_p)\ar[r]^{\simeq}&\prod_p\mathrm{Fil}_{BMS}^*HH(A^{\wedge}_p/\bz_p,\bz_p) 
}
\]
where the lower horizontal map is an equivalence, see Remark \ref{remforbifilt}. Hence the upper horizontal map is an equivalence. For $j\geq 1$, we get the cartesian square
\[ \xymatrix{
\mathrm{gr}^j_{\mathcal{Z}_{\bz}}F^*THH(A)\ar[d]\ar[r]& 0 \ar[d]\\
\prod_p\mathrm{gr}^j_{\mathcal{Z}_{\bz_p}}\mathrm{Fil}_{BMS}^*THH(A^{\wedge}_p,\bz_p)\ar[r]&0 
}
\]
hence equivalences
\begin{eqnarray*}
\mathrm{gr}^j_{\mathcal{Z}_{\bz}}F^*THH(A)&\stackrel{\sim}{\rightarrow}&\prod_p\mathrm{gr}^j_{\mathcal{Z}_{\bz_p}}\mathrm{Fil}_{BMS}^*THH(A^{\wedge}_p,\bz_p)\\
\label{la2}&\simeq& \prod_p \mathrm{Fil}_{BMS}^{*-j}HH(A^{\wedge}_p/\bz_p,\bz_p)\widehat{\otimes}_{\bz_p}\mathrm{gr}_{BMS}^jTHH(\bz_p,\bz_p)\\
\label{la3}&\simeq & \prod_p \left(\mathrm{Fil}_{HKR}^{*-j}HH(A)\right)^{\wedge}_p\widehat{\otimes}_{\bz_p}\mathrm{gr}_{BMS}^jTHH(\bz_p,\bz_p)\\
\label{la4}&\simeq & \prod_p \left(\mathrm{Fil}_{HKR}^{*-j}HH(A)\right)^{\wedge}_p\widehat{\otimes}_{\bz_p}\left(\mathrm{gr}_{F}^jTHH(\bz)\right)^{\wedge}_p\\
\label{la5}&\simeq & \prod_p \left(\mathrm{Fil}_{HKR}^{*-j}HH(A)\otimes^L_{\bz}\mathrm{gr}_{F}^jTHH(\bz)\right)^{\wedge}_p\\
\label{la6}&\simeq & \left(\mathrm{Fil}_{HKR}^{*-j}HH(A)\otimes^L_{\bz}\mathrm{gr}_{F}^jTHH(\bz)\right)^{\wedge}\\
\label{la7}&\simeq & \mathrm{Fil}_{HKR}^{*-j}HH(A)\otimes^L_{\bz}\mathrm{gr}_{F}^jTHH(\bz).
\end{eqnarray*}
Here we use respectively Theorem \ref{thmpadic}, Proposition \ref{mapfiltHKR}, Proposition \ref{pcomplTHH}, the fact that the $p$-completion functor is monoidal and the equivalence $\mathrm{gr}_{F}^jTHH(\bz)\simeq \bz/j\bz[2j-1]$ given by Corollary \ref{key}.
The proof of the analogous statement  for $TP$ and $TC^-$ is similar. 

Assertion (3) then follows from the computation of the graded pieces of the HKR filtration and the Beilinson filtration \cite{Antieau18}. Moreover, we have $\mathcal{Z}_{\bz}^0 \mathrm{gr}^{0}_{F}THH(A)\simeq A$, hence $\mathcal{Z}_{\bz}^* \mathrm{gr}^{n}_{F}THH(A)\in \widehat{DF}(A)$ by multiplicativity of the bifiltration. Similarly, $\mathcal{Z}_{\bz}^0 \mathrm{gr}^{0}_{F}TC^-(A)= \mathrm{gr}^{0}_{F}TC^-(A)$ is a $\mathbb{E}_{\infty}$-$\bz$-algebra (see the proof of Proposition \ref{propmotfiltTP}(4)), hence $\mathcal{Z}_{\bz}^* \mathrm{gr}^{n}_{F}TC^-(A)$, $\mathcal{Z}_{\bz}^* \mathrm{gr}^{n}_{F}TP(A)$ and therefore $\mathcal{Z}_{\bz}^* \mathrm{gr}^{n}_{F}\Sigma^2TC^+(A)$ are objects of $\widehat{DF}(\bz)$.

\end{proof}

\begin{cor}\label{connectivity}
For any quasi-lci ring with bounded torsion $A$ and any $n\geq 0$, we have  
$F^{n}\Sigma^2TC^+(A) \in \mathrm{Sp}_{> n}$ and $\mathcal{Z}_{\bz}^jF^{n}\Sigma^2TC^+(A)\in \mathrm{Sp}_{\geq n+j}$. The $\mathrm{Sp}$-valued functors $A\mapsto \mathrm{gr}^{n}_{F}\Sigma^2TC^+(A)$, $A\mapsto F^{n}\Sigma^2TC^+(A)$ and $A\mapsto \mathcal{Z}_{\bz}^jF^{n}\Sigma^2TC^+(A)$ are left Kan extended from finitely generated polynomial $\bz$-algebras, for any $n,j\geq 0$.
\end{cor}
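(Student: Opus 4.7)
The plan is to deduce both the connectivity bounds and the left Kan extension property from the explicit description of the bigraded pieces
\[
\mathrm{gr}^j_{\mathcal{Z}_\bz}\mathrm{gr}^n_F\Sigma^2TC^+(A)\simeq L\Omega^{<n-j}_{A/\bz}\otimes^L_\bz \bz/j\bz[2n-\epsilon_j]
\]
provided by Theorem \ref{thmfinal}(3), and to propagate each property up through the two complete filtrations $\mathcal{Z}^*_\bz$ and $F^*$ in turn.

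For the connectivity, since $L_{A/\bz}$ has Tor-amplitude in $[-1,0]$, the derived exterior power $L\Lambda^i L_{A/\bz}$ has Tor-amplitude in $[-i,0]$, so iterating the Hodge filtration shows $L\Omega^{<n-j}_{A/\bz}\in\mathrm{Sp}_{\geq -(n-j-1)}$. After the shift by $2n-\epsilon_j$, the bigraded piece lies in $\mathrm{Sp}_{\geq n+j+1-\epsilon_j}$, which is $\mathrm{Sp}_{\geq n+j}$ for $j\geq 1$ and $\mathrm{Sp}_{\geq n+1}$ for $j=0$. I then invoke the standard Milnor/Mittag-Leffler argument: for a complete $\bn^{\mathrm{op}}$-indexed filtration whose graded pieces uniformly lie in $\mathrm{Sp}_{\geq c}$, every term of the filtration lies in $\mathrm{Sp}_{\geq c}$. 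Applied in the $\mathcal{Z}^*_\bz$-direction with $n$ fixed, this gives $\mathcal{Z}^j_\bz\mathrm{gr}^n_F\Sigma^2TC^+(A)\in\mathrm{Sp}_{\geq n+j}$ for $j\geq 1$ and $\mathrm{gr}^n_F\Sigma^2TC^+(A)\in\mathrm{Sp}_{\geq n+1}$; applied again in the $F^*$-direction with the bounds just obtained on $\mathcal{Z}^j_\bz\mathrm{gr}^m_F$ for $m\geq n$, it yields $\mathcal{Z}^j_\bz F^n\Sigma^2TC^+(A)\in\mathrm{Sp}_{\geq n+j}$, and in particular $F^n\Sigma^2TC^+(A)\in\mathrm{Sp}_{>n}$.

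For the left Kan extension claim, the bigraded piece $L\Omega^{<n-j}_{A/\bz}\otimes^L_\bz\bz/j\bz[2n-\epsilon_j]$ is LKE from polynomial $\bz$-algebras, because $L\Omega^{<n-j}_{A/\bz}$ is by Illusie's construction defined as such an LKE, while tensoring with the perfect $\bz$-complex $\bz/j\bz$ and shifting are colimit-preserving operations and thus preserve LKE. Since LKE functors are closed under cofiber sequences in the stable setting, each finite subquotient $\mathcal{Z}^{j_1}_\bz\mathrm{gr}^n_F/\mathcal{Z}^{j_2}_\bz\mathrm{gr}^n_F$ (with $j_1<j_2$) is LKE. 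The connectivity bound forces $\tau_{\leq k}(\mathcal{Z}^j_\bz\mathrm{gr}^n_F)\simeq\tau_{\leq k}(\mathcal{Z}^j_\bz\mathrm{gr}^n_F/\mathcal{Z}^{j'}_\bz\mathrm{gr}^n_F)$ for $j'\gg 0$, hence each $\tau_{\leq k}(\mathcal{Z}^j_\bz\mathrm{gr}^n_F)$ is LKE; since $\mathcal{Z}^j_\bz\mathrm{gr}^n_F$ is uniformly bounded below, it is recovered as the Postnikov limit of its truncations, and LKE is preserved under such bounded-below Postnikov limits, because $\tau_{\leq k}$ commutes with the sifted colimits computing LKE for uniformly connective diagrams. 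The same two-stage argument carried out in the $F^*$-direction, using the connectivity bounds already established on $\mathcal{Z}^j_\bz\mathrm{gr}^m_F$, concludes LKE for $\mathcal{Z}^j_\bz F^n\Sigma^2TC^+$ and in particular for $F^n\Sigma^2TC^+$.

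The main obstacle is the LKE step: one has to justify that left Kan extension from polynomial $\bz$-algebras is preserved under the completeness limits appearing at both stages of the bifiltration. This relies crucially on the uniform connectivity growth established in the first part of the argument, so that Postnikov truncations stabilize and the sifted colimits underlying LKE commute with $\tau_{\leq k}$ in the bounded-below regime; if the graded pieces did not become increasingly connective, this preservation would fail.
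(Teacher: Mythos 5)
Your connectivity argument is correct and is essentially the argument of the paper: by Theorem \ref{thmfinal} the bigraded pieces $L\Omega^{<n-j}_{A/\bz}\otimes^L_{\bz}\bz/j\bz[2n-\epsilon_j]$ lie in $\mathrm{Sp}_{\geq n+j}$ (and in $\mathrm{Sp}_{>n}$ for $j=0$) by the Hodge filtration and connectivity of the $L\Lambda^iL_{A/\bz}$, and these bounds propagate to $\mathcal{Z}^j_{\bz}F^{n}\Sigma^2TC^+(A)$ through the two complete filtrations exactly as in the paper.

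The left Kan extension step, however, contains a genuine misstep. From $\tau_{\leq k}(\mathcal{Z}^j_{\bz}\mathrm{gr}^n_F)\simeq\tau_{\leq k}(\mathcal{Z}^j_{\bz}\mathrm{gr}^n_F/\mathcal{Z}^{j'}_{\bz}\mathrm{gr}^n_F)$ and the fact that the finite quotient is left Kan extended you conclude that each truncation $\tau_{\leq k}(\mathcal{Z}^j_{\bz}\mathrm{gr}^n_F)$ is left Kan extended; but truncations of left Kan extended functors are not left Kan extended in general: for instance $\tau_{\leq 0}L_{-/\bz}\simeq\Omega^1_{-/\bz}$ is not left Kan extended (its left Kan extension is $L_{-/\bz}$, which differs from $\Omega^1_{-/\bz}$ already at $\bz[x]/(x^2)$, a quasi-lci ring with bounded torsion). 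For the same reason your claim that $\tau_{\leq k}$ commutes with the sifted colimits computing the left Kan extension of uniformly connective diagrams is false as stated: a geometric realization of $k$-truncated spectra is usually not $k$-truncated. What is true, and what the proof actually needs, is only that $\mathrm{Sp}_{\geq c}$ is closed under colimits. With this one argues as the paper does: let $dF^*$ (resp.\ $d\mathcal{Z}^*_{\bz}F^*$) be the levelwise left Kan extension of the (bi)filtered functor; its terms satisfy $dF^{n}\Sigma^2TC^+(A)\in\mathrm{Sp}_{>n}$ and $d\mathcal{Z}^j_{\bz}F^{n}\Sigma^2TC^+(A)\in\mathrm{Sp}_{\geq n+j}$ because the values on polynomial algebras do and connectivity passes to colimits, so the left Kan extended (bi)filtration is again (bi)complete; the canonical map to $\mathcal{Z}^*_{\bz}F^*\Sigma^2TC^+(A)$ is an equivalence on (bi)graded pieces, since those are left Kan extended (as you correctly argue), and a map of complete filtrations inducing equivalences on graded pieces is an equivalence. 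Equivalently, your Postnikov comparison can be repaired by comparing $\tau_{\leq k}$ of the left Kan extension with $\tau_{\leq k}$ of the left Kan extension of the finite quotient, using the connectivity of the left Kan extension of $\mathcal{Z}^{j'}$, rather than any left Kan extension property of truncations themselves.
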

\begin{proof}
We first show that $\mathrm{gr}^{n}_{F}\Sigma^2TC^+(A)\in \mathrm{Sp}_{> n}$ for any fixed $n$. By Theorem \ref{thmfinal}, it is enough to show that $L\Omega^{<n-j}_{A/\bz}\otimes^L_{\bz}\bz/j\bz[2n-\epsilon_j]\in \mathrm{Sp}_{> n}$ for any $0\leq j< n$. In view of the Hodge filtration, this follows from the fact that 
$$L\Lambda^k L_{A/\bz}\otimes^L_{\bz} \bz/j[2n-k-\epsilon_j]\in \mathrm{Sp}_{> n}$$
for any $0\leq j< n$ and any $0\leq k< n-j$, which in turn easily follows from $L\Lambda^k L_{A/\bz}\otimes^L_{\bz} \bz/j\in \mathrm{Sp}_{\geq 0}$. We obtain $F^{n}\Sigma^2TC^+(A) \in \mathrm{Sp}_{> n}$ by completeness of the filtration $F^{*}\Sigma^2TC^+(A)$. Similarly, $\mathrm{gr}^j_{\mathcal{Z}_{\bz}} \mathrm{gr}^{n}_{F} \Sigma^2TC^+(A)$ belongs to $\mathrm{Sp}_{\geq n+j}$ hence so does $\mathcal{Z}_{\bz}^jF^{n}\Sigma^2TC^+(A)$ by completeness.

Since $A\mapsto L\Omega^{<n-j}_{A/\bz}$ is left Kan extended from from finitely generated polynomial $\bz$-algebras, the same is true for $A\mapsto \mathrm{gr}^{n}_{F}\Sigma^2TC^+(A)$. Consider the left Kan extension 
$dF^{*}\Sigma^2TC^+(-)$ of the $DF(\bs)$-valued functor $A\mapsto F^{*}\Sigma^2TC^+(A)$. For any quasi-lci ring with bounded torsion $A$, there is a canonical map of filtrations $dF^{*}\Sigma^2TC^+(A)\rightarrow F^{*}\Sigma^2TC^+(A)$, which induces equivalences on graded pieces, as $A\mapsto \mathrm{gr}^{n}_{F}\Sigma^2TC^+(A)$ is left Kan extended for any $n\geq 0$. Hence it is enough to check that the filtration $dF^{*}\Sigma^2TC^+(A)$ is complete. But  $dF^{n}\Sigma^2TC^+(A)\in \mathrm{Sp}_{> n}$ for any $n\geq 0$ since $\mathrm{Sp}_{> n}$ is closed under colimits, hence  $dF^{*}\Sigma^2TC^+(A)$ is indeed complete. Similarly, define $d\mathcal{Z}_{\bz}^*F^*\Sigma^2TC^+(-)$ by left Kan extension of the $DBF(\bs)$-valued functor $A\mapsto\mathcal{Z}_{\bz}^*F^*\Sigma^2TC^+(A)$. For any quasi-lci ring with bounded torsion $A$, there is a canonical map of bifiltrations $d\mathcal{Z}_{\bz}^*F^{*}\Sigma^2TC^+(A)\rightarrow \mathcal{Z}_{\bz}^*F^{*}\Sigma^2TC^+(A)$, which induces equivalences on graded pieces. Hence it is enough to check that $d\mathcal{Z}_{\bz}^*F^{*}\Sigma^2TC^+(A)$ is bicomplete, which follows from $d\mathcal{Z}_{\bz}^jF^{n}\Sigma^2TC^+(A)\in \mathrm{Sp}_{\geq n+j}$.

\end{proof}

\begin{cor}\label{corfiltS}
For any commutative ring $A$, there is a bicomplete biexhaustive $(\bn^{\mathrm{op}}\times\bn_{>0}^{\mathrm{op}})$-indexed bifiltration $\mathcal{Z}_{\bz}^*F^*\Sigma^2TC^+(A)$ on the spectrum $\Sigma^2TC^+(A)$, extending the bifiltration of Definition \ref{global-bifilt}, with graded pieces
$$\mathrm{gr}_{\mathcal{Z}_{\bz}}^j\mathrm{gr}_{F}^n\Sigma^2TC^+(A)\simeq L\Omega^{<n-j}_{A/\bz}\otimes^L_{\bz}\bz/j\bz[2n-\epsilon_j]. $$
Moreover, the filtration $\mathcal{Z}_{\bz}^*\mathrm{gr}_{F}^n\Sigma^2TC^+(A)$ belongs to $\widehat{DF}(\bz)$ for any $n$. Finally, the functor $A\mapsto \mathcal{Z}_{\bz}^*F^*\Sigma^2TC^+(A)$ satisfies fpqc-descent.
\end{cor}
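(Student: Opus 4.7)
The plan is to define $\mathcal{Z}_{\bz}^*F^*\Sigma^2TC^+(A)$ for an arbitrary commutative ring $A$ as the left Kan extension, along $\mathrm{Poly}_\bz \hookrightarrow \mathrm{CRing}$, of the $DBF(\bs)$-valued functor of Definition \ref{global-bifilt} restricted to finitely generated polynomial $\bz$-algebras (which are quasi-lci with bounded torsion, so are in the domain of the construction). Because Corollary \ref{connectivity} provides the uniform connectivity estimate $\mathcal{Z}_{\bz}^jF^{n}\Sigma^2TC^+(P)\in \mathrm{Sp}_{\geq n+j}$ on polynomial algebras, and because $\mathrm{Sp}_{\geq n+j}$ is stable under arbitrary colimits, the same estimate persists for all $A$, which implies bicompleteness of the bifiltration.

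Next I would identify the graded pieces. Since $\mathrm{gr}^j_{\mathcal{Z}_\bz}\mathrm{gr}^n_F(-)$ is the cofiber of a map between levelwise evaluations of the bifiltration, it commutes with colimits, so it is the left Kan extension of its restriction to $\mathrm{Poly}_\bz$. On polynomial algebras, Theorem \ref{thmfinal} identifies it with $L\Omega^{<n-j}_{-/\bz}\otimes^L_\bz\bz/j\bz[2n-\epsilon_j]$; since derived de Rham cohomology modulo the Hodge filtration is by definition left Kan extended from polynomial algebras, the same identification holds on all of $\mathrm{CRing}$. Biexhaustiveness is then formal: on polynomial algebras the colimit of the bifiltration is $\Sigma^2TC^+(P)$, which by Corollary \ref{connectivity} is itself left Kan extended from polynomial algebras, so the colimit of the left-Kan-extended bifiltration recovers the (left-Kan-extended) value $\Sigma^2TC^+(A)$ on any $A$. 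The assertion $\mathcal{Z}_{\bz}^*\mathrm{gr}^n_F\Sigma^2TC^+(A)\in \widehat{DF}(\bz)$ is inherited from the polynomial case (where $\mathrm{gr}^0_F TC^-$ was shown to be an $\mathbb{E}_\infty$-$\bz$-algebra in Proposition \ref{propmotfiltTP}(4)) because $H\bz$-module structures, like the spectra themselves, are obtained by left Kan extension.

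For fpqc descent, I would proceed in two steps. First, each graded piece $L\Omega^{<m}_{-/\bz}\otimes^L_\bz\bz/j\bz$ satisfies fpqc descent: this is a standard consequence of fpqc descent for the cotangent complex (Bhatt), which upgrades through the Hodge filtration and survives derived tensoring with $\bz/j$. Second, I would propagate this from the graded pieces to the whole bifiltration: using the fiber sequences
\[ \mathcal{Z}_{\bz}^{j+1}F^{n}\Sigma^2TC^+\longrightarrow \mathcal{Z}_{\bz}^{j}F^{n}\Sigma^2TC^+ \longrightarrow \mathrm{gr}^j_{\mathcal{Z}_\bz}\mathrm{gr}^n_F\Sigma^2TC^+ \]
together with analogous sequences for the $F^*$ direction, one gets sheaf-ness at every finite level by induction; bicompleteness then lets one pass to the (sheaf-preserving) limit over the bifiltration.

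The main obstacle I expect is the fpqc descent step, specifically ensuring that the induction really produces descent at each bifiltration level and that the products appearing in Definition \ref{global-bifilt} (indexed by all primes) interact correctly with descent. A clean way to handle this is to observe that fpqc sheaves form a reflective subcategory of presheaves of spectra, closed under arbitrary limits; combined with bicompleteness and the fact that the graded pieces are sheaves, this formally forces the bifiltered pieces to be sheaves, avoiding any delicate term-by-term argument with the defining pullback square.
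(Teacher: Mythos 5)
Your proposal is correct and follows essentially the same route as the paper: the construction is the left Kan extension from finitely generated polynomial $\bz$-algebras, bicompleteness comes from the connectivity bound $\mathcal{Z}_{\bz}^jF^{n}\Sigma^2TC^+\in \mathrm{Sp}_{\geq n+j}$ of Corollary \ref{connectivity} (preserved under colimits), the graded pieces are identified because $L\Omega^{<n-j}_{-/\bz}\otimes^L_{\bz}\bz/j$ is itself left Kan extended from polynomial algebras, and fpqc-descent is deduced from descent for the graded pieces (via \cite[Theorem 3.1]{BMS}) together with bicompleteness. Your elaboration of the descent step (induction through the finite quotients plus passage to the limit inside the reflective subcategory of sheaves) is just a spelled-out version of the argument the paper leaves implicit.
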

\begin{proof}
By Corollary \ref{connectivity}, we may extend the functor $\mathcal{Z}_{\bz}^*F^*\Sigma^2TC^+(-)$ to the category of all (simplicial) commutative rings, by left Kan extension of the $DBF(\bs)$-valued functor $A\mapsto\mathcal{Z}_{\bz}^*F^*\Sigma^2TC^+(A)$. The resulting $DBF(\bs)$-valued functor $\mathcal{Z}_{\bz}^*F^*\Sigma^2TC^+(-)$ takes values in $\widehat{DBF}(\bs)$ by Corollary \ref{connectivity} and satisfies fpqc-descent, since its graded pieces do by \cite[Theorem 3.1]{BMS}. 
\end{proof}
\begin{rem}
The complex
$L\Omega_{A/\bs}^{<n}$ is equivalent to a complex of the form
$$[\cdots[[L\Omega^{<n}_{A/\bz}\rightarrow L\Omega^{<n-2}_{A/\bz}/2]\rightarrow  L\Omega^{<n-3}_{A/\bz}/3]\rightarrow \cdots \rightarrow L\Omega^{<1}_{A/\bz}/(n-1)]$$
where we denote by $[C\rightarrow C']$ the fiber of a morphism of complexes $C\rightarrow C'$,  and  $C/j:=C\otimes^L_{\bz}\bz/j$. It would be interesting to understand the maps appearing in this diagram. 
\end{rem}

\begin{rem}
If one defines the spectrum $\Sigma^2TC^+(\X)$ by Zariski descent for any scheme $\X$, then Corollary \ref{corfiltS} extends to arbitrary schemes.
\end{rem}

The proof below relies on Theorem \ref{thmfinal} together with the known computation of $\pi_*THH(\mathcal{O}_F)$.
\begin{ff-proof}
For any $n,j$, we have 
$$\mathrm{gr}^j_{\mathcal{Z}_{\bz}} \mathrm{gr}^{n}_{F}THH(\mathcal{O}_F)\simeq L\Lambda^{n-j}L_{\mathcal{O}_F/\bz}[n-j]\otimes^L_{\bz}\mathrm{gr}^{j}_{F}THH(\bz)$$
by Theorem \ref{thmfinal}. Moreover $L\Lambda^{i}L_{\mathcal{O}_F/\bz}$ is concentrated in homological degree $i-1$ for $i>0$ by \cite[Proposition 5.36]{Flach-Morin18}. 
In particular $\mathrm{gr}^{0}_{F}THH(\mathcal{O}_F)\simeq \mathcal{O}_F$
is concentrated in degree $0$ and
$$\mathrm{gr}^{1}_{F}THH(\mathcal{O}_F)\simeq L_{\mathcal{O}_F/\bz}[1]$$
is concentrated in homological degree $1$. 

If $n\geq 2$ then $\mathrm{gr}^{n}_{F}THH(\mathcal{O}_F)$ is concentrated in homological degrees in $[2n-1,2n-2]$.
Indeed
$$\mathrm{gr}^0_{\mathcal{Z}_{\bz}} \mathrm{gr}^{n}_{F}THH(\mathcal{O}_F)\simeq L\Lambda^{n}L_{\mathcal{O}_F/\bz}[n] $$
is concentrated in degree $2n-1$
and for any $j\geq 1$
$$\mathrm{gr}^j_{\mathcal{Z}_{\bz}} \mathrm{gr}^{n}_{F}THH(\mathcal{O}_F)\simeq L\Lambda^{n-j}L_{\mathcal{O}_F/\bz}[n-j]\otimes^L_{\bz}\bz/j[2j-1]$$
is concentrated in homological degrees in $[2n-1,2n-2]$. We set
$$\mathcal{N}^{n}(\mathcal{O}_F):=\mathrm{gr}^{n}_FTHH(\mathcal{O}_F)[-2n]$$
which we see as a cochain complex.
Hence $\mathcal{N}^{0}(\mathcal{O}_F)$ is concentrated in degree $0$,  $\mathcal{N}^{1}(\mathcal{O}_F)$ is  concentrated in cohomological degree $1$, and $\mathcal{N}^{n}(\mathcal{O}_F)$ is  concentrated in cohomological degrees in $[1,2]$ for any $n\geq 2$. It follows that the spectral sequence
$$E_2^{i,j}=H^{i-j}(\mathcal{N}^{-j}(\mathcal{O}_F))\Rightarrow \pi_{-i-j} THH(\mathcal{O}_F)$$
degenerates and  gives isomorphisms
\begin{eqnarray*}
H^0(\mathcal{N}^{0}(\mathcal{O}_F))&=&\pi_{0} THH(\mathcal{O}_F)= \mathcal{O}_F;\\ 
H^1(\mathcal{N}^{j}(\mathcal{O}_F))&=&\pi_{2j-1} THH(\mathcal{O}_F)= \mathcal{D}^{-1}_F/j \mathcal{O}_F \textrm{ for any }j\geq 1;\\
H^2(\mathcal{N}^{j}(\mathcal{O}_F))&= &\pi_{2j-2} THH(\mathcal{O}_F)=0 \textrm{ for any }j\geq 2;
\end{eqnarray*}
In particular, for any $n\geq 0$, $\mathcal{N}^{n}(\mathcal{O}_F)$ is  concentrated in cohomological degrees in $[0,1]$. Hence
$\mathrm{gr}^{n}_FTHH(\mathcal{O}_F)$ is concentrated in homological degrees $\geq 2n-1$. It follows that $\pi_*(F^n THH(\mathcal{O}_F))$ is concentrated in homological degrees $\geq 2n-1$, hence the map
$$F^n THH(\mathcal{O}_F)\rightarrow THH(\mathcal{O}_F)$$
factors through $\tau_{\geq 2n-1} THH(\mathcal{O}_F)$. This gives a morphism of filtrations
\begin{equation}\label{hi}
F^{*}THH(\mathcal{O}_F)\rightarrow \tau_{\geq 2*-1} THH(\mathcal{O}_F).
\end{equation}
which induces equivalences on graded pieces.

Now we prove the second assertion of the corollary. For any $i\geq 0$, the discussion above shows that the complex $\mathcal{N}^{i}(\mathcal{O}_F)$ is a perfect complex of abelian groups concentrated in cohomological degrees in $[0,1]$. Hence $\mathcal{N}^{i}(\mathcal{O}_F)^{\wedge}_p$ is concentrated in cohomological degrees in $[0,1]$ for any prime number $p$ and any $i\geq 0$. By Propositions \ref{pcomplTHH}, \ref{propmotfiltTP} and \cite[Theorem 1.12]{BMS},  there is a finite decreasing complete exhaustive filtration on $(L\Omega_{\mathcal{O}_F/\bs}^{<n})^{\wedge}_p$, indexed by $i\in[0,n-1]$, with $i$-graded pieces
$\mathcal{N}^{i}(\mathcal{O}_F)^{\wedge}_p$ for $0\leq i<n$. It follows that $(L\Omega_{\mathcal{O}_F/\bs}^{<n})^{\wedge}_p$ is concentrated in cohomological degrees in $[0,1]$ for any prime $p$, hence so is the perfect complex of abelian groups $L\Omega_{\mathcal{O}_F/\bs}^{<n}$.
It follows by induction and completeness of the filtration that $\pi_*(F^n\Sigma^2TC^+(\mathcal{O}_F))$ is concentrated in homological degrees $\geq 2n-1$. We get a morphism of filtrations
\begin{equation}\label{filtmap+}
F^{*}\Sigma^2TC^+(\mathcal{O}_F)\rightarrow \tau_{\geq 2*-1} \Sigma^2TC^+(\mathcal{O}_F).
\end{equation}
The spectral sequence
$$E_2^{i,j}=H^{i-j}(L\Omega_{\mathcal{O}_F/\bs}^{<j})\Rightarrow \pi_{-i-j} \Sigma^2TC^+(\mathcal{O}_F)$$
degenerates and  gives isomorphisms
$$H^0(L\Omega_{\mathcal{O}_F/\bs}^{<n})\simeq \pi_{2n}\Sigma^2TC^+(\mathcal{O}_F)$$
and
$$H^1(L\Omega_{\mathcal{O}_F/\bs}^{<n})\simeq \pi_{2n-1}\Sigma^2TC^+(\mathcal{O}_F)$$
for any $n\geq 0$. It follows that (\ref{filtmap+}) is an equivalence.

\end{ff-proof}

\section{The correction factor $C_{\infty}(\X,n)$}\label{sectionC}
In this section, we prove the statements of Section 1.2. Corollary \ref{Milne} and Theorem \ref{thmintro} follow from Corollary \ref{corCXnintro}, hence it remains to prove Corollary \ref{corCXnintro}. Recall from Definition \ref{defn/S} that we denote
$$R\Gamma(\X,L\Omega_{\X/\bs}^{<n}):=R\Gamma(\X,\mathrm{gr}_{F}^n\Sigma^2TC^+(-))[-2n]$$
for any scheme $\X$. Corollary \ref{corfiltS} immediately gives the following 

\begin{cor}\label{corfilproper}
The complex $R\Gamma(\X,L\Omega_{\X/\bs}^{<n})$ has a complete exhaustive $\bn^{\mathrm{op}}$-indexed filtration $\mathcal{Z}^*$ by $H\bz$-modules with graded pieces
\begin{eqnarray*}
\mathrm{gr}^0_{\mathcal{Z}} R\Gamma(\X,L\Omega_{\X/\bs}^{<n})&\simeq& R\Gamma(\X,L\Omega_{\X/\bz}^{<n});\\
\mathrm{gr}^j_{\mathcal{Z}} R\Gamma(\X,L\Omega_{\X/\bs}^{<n})&\simeq& R\Gamma(\X,L\Omega_{\X/\bz}^{<n-j})\otimes^L_{\bz}\bz/j\bz[-1]
\end{eqnarray*}
for any $j\geq1$.
\end{cor}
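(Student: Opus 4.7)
The plan is to obtain the filtration $\mathcal{Z}^*$ by applying $R\Gamma(\X,-)[-2n]$ directly to the filtration $\mathcal{Z}_{\bz}^*\mathrm{gr}_F^n\Sigma^2TC^+(-)$ furnished by Corollary~\ref{corfiltS}. More precisely, I would set
$$\mathcal{Z}^jR\Gamma(\X,L\Omega_{\X/\bs}^{<n}):=R\Gamma(\X,\mathcal{Z}_{\bz}^j\mathrm{gr}_F^n\Sigma^2TC^+(-))[-2n],$$
interpreting the right-hand side using the fpqc-sheaf structure of $\mathcal{Z}_{\bz}^*\mathrm{gr}_F^n\Sigma^2TC^+(-)$ from Corollary~\ref{corfiltS}, which in particular provides Zariski descent. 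Since $\mathcal{Z}_{\bz}^0\mathrm{gr}_F^n\Sigma^2TC^+(-)\simeq\mathrm{gr}_F^n\Sigma^2TC^+(-)$ and the filtration is $\bn^{\mathrm{op}}$-indexed, the resulting $\mathcal{Z}^*$ is $\bn^{\mathrm{op}}$-indexed with $\mathcal{Z}^0\simeq R\Gamma(\X,L\Omega_{\X/\bs}^{<n})$, making exhaustiveness automatic.

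For the graded pieces, I would use that $R\Gamma(\X,-)$ is exact between stable $\infty$-categories and hence commutes with cofiber sequences. Applying $R\Gamma(\X,-)[-2n]$ to the equivalences
$$\mathrm{gr}^j_{\mathcal{Z}_{\bz}}\mathrm{gr}^n_F\Sigma^2TC^+(-)\simeq L\Omega^{<n-j}_{-/\bz}\otimes^L_{\bz}\bz/j\bz[2n-\epsilon_j]$$
from Theorem~\ref{thmfinal}(3) yields the stated formulas: for $j=0$ one has $\epsilon_0=0$ and $\bz/0\bz=\bz$, producing $R\Gamma(\X,L\Omega^{<n}_{\X/\bz})$, while for $j\geq 1$ one has $\epsilon_j=1$ and the shift $[2n-1-2n]=[-1]$ gives the desired expression. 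Completeness of $\mathcal{Z}^*$ follows from the fact that $\mathcal{Z}_{\bz}^*\mathrm{gr}_F^n\Sigma^2TC^+(-)$ lies in $\widehat{DF}(\bz)$ (Corollary~\ref{corfiltS}), so that $\lim_j\mathcal{Z}_{\bz}^j\mathrm{gr}_F^n\Sigma^2TC^+(-)\simeq 0$ as fpqc sheaves, together with the fact that $R\Gamma(\X,-)$ commutes with limits.

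The $H\bz$-module structure on each $\mathcal{Z}^j$ is inherited from the $H\bz$-module structure on $\mathrm{gr}_F^n\Sigma^2TC^+(-)$ established in Proposition~\ref{propmotfiltTP}(4), combined with the multiplicativity of the bifiltration, which ensures that each $\mathcal{Z}_{\bz}^j\mathrm{gr}_F^n\Sigma^2TC^+(-)$ is a module over $\mathcal{Z}_{\bz}^0\mathrm{gr}_F^0TC^-(-)\simeq\mathrm{gr}_F^0TC^-(-)$, itself an $\mathbb{E}_\infty$-$H\bz$-algebra. In short, there is no genuine obstacle here: the corollary is a formal consequence of applying Zariski cohomology to the already-constructed filtration $\mathcal{Z}_{\bz}^*\mathrm{gr}_F^n\Sigma^2TC^+(-)$, and the only point requiring care is the bookkeeping of the cohomological shift by $[-2n]$ against the shifts $[2n-\epsilon_j]$ appearing in Theorem~\ref{thmfinal}(3).
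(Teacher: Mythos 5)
Your proposal is correct and follows the same route the paper intends: Corollary \ref{corfilproper} is obtained exactly by applying $R\Gamma(\X,-)[-2n]$ (via Zariski descent, available by Corollary \ref{corfiltS}) to the filtration $\mathcal{Z}_{\bz}^*\mathrm{gr}_F^n\Sigma^2TC^+(-)$, reading off the graded pieces from Theorem \ref{thmfinal}(3)/Corollary \ref{corfiltS} with the shift bookkeeping you describe, and using that this filtration lies in $\widehat{DF}(\bz)$ and is complete objectwise. Nothing essential is missing.
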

If $C$ is a perfect complex of abelian groups with finite cohomology groups we set
$$\chi^{\times}(C):=\prod_{i\in\bz}\mid H^{i}(C) \mid^{(-1)^{i}}.$$

\begin{f-proof}
The scheme $\X$ is regular and proper over $\bz$. It follows that  $R\Gamma(\X,L\Omega_{\X/\bz}^{<n-j})$ is a perfect complex of abelian groups for any $j\geq 0$, see e.g. \cite[Section 5.1]{Flach-Morin18}. In view of Corollary \ref{corfilproper}, the complex $R\Gamma(\X,L\Omega_{\X/\bs}^{<n})$ is also a perfect complex of abelian groups and we have a fiber sequence
$$R\Gamma(\X, \mathcal{Z}^1L\Omega_{\X/\bs}^{<n})\rightarrow R\Gamma(\X,L\Omega_{\X/\bs}^{<n})\rightarrow R\Gamma(\X,L\Omega_{\X/\bz}^{<n})$$
where $R\Gamma(\X, \mathcal{Z}^1L\Omega_{\X/\bs}^{<n})$ has finite cohomology groups. More precisely, $R\Gamma(\X, \mathcal{Z}^1L\Omega_{\X/\bs}^{<n})$ has a finite decreasing complete exhaustive filtration indexed by $j\in[1,n]$ with graded pieces
\begin{equation}\label{grrrr}
\mathrm{gr}^jR\Gamma(\X, \mathcal{Z}^1L\Omega_{\X/\bs}^{<n})\simeq R\Gamma(\X,L\Omega_{\X/\bz}^{<n-j})\otimes^L_{\bz}\bz/j\bz[-1].
\end{equation}
We obtain
\begin{eqnarray}
&&\chi^{\times}\left(R\Gamma(\X, \mathcal{Z}^1L\Omega_{\X/\bs}^{<n})\right)\\
\label{grrrrtooo}&=&\prod_{1\leq j\leq n} \chi^{\times}\left(R\Gamma(\X,L\Omega_{\X/\bz}^{<n-j})\otimes^L_{\bz}\bz/j\bz[-1]\right)\\
\label{Hodgequality}&=&\prod_{1\leq j\leq n} \prod_{0\leq i\leq n-j-1} \chi^{\times}\left(R\Gamma(\X,L\Lambda^{i}L_{\X/\bz})\otimes^L_{\bz}\bz/j\bz[-i-1]\right)\\
\label{mult-add}&=&\prod_{1\leq j\leq n} \prod_{0\leq i\leq n-j-1} j^{\sum_{k\in\bz}(-1)^{k+i+1}\mathrm{dim}_{\bq} H^k(\X_{\bq},\Omega^{i})}\\
&=&\prod_{1\leq j\leq n} \prod_{0\leq i\leq n-j-1}\prod_{k\in \bz} j^{(-1)^{k+i+1}\mathrm{dim}_{\bq} H^k(\X_{\bq},\Omega^{i})}\\
&=&\prod_{0\leq i\leq n-2} \prod_{k\in \bz} \prod_{1\leq j\leq n-i-1} j^{(-1)^{k+i+1}\mathrm{dim}_{\bq} H^k(\X_{\bq},\Omega^{i})}\\
&=&\prod_{0\leq i\leq n-2;k}(n-1-i)!^{(-1)^{k+i+1}\mathrm{dim}_{\bq} H^k(\X_{\bq},\Omega^{i})}\\
&=&C_{\infty}(\X,n)^{-1}.
\end{eqnarray}
Here (\ref{grrrrtooo}) follows from (\ref{grrrr}), (\ref{Hodgequality}) follows from the Hodge filtration and (\ref{mult-add}) is given by Lemma \ref{Lem-multadd} below. Indeed, we have 
$$R\Gamma(\X,L\Lambda^{i}L_{\X/\bz})\otimes_{\bz}\bq\simeq R\Gamma(\X_{\bq},L\Lambda^{i}L_{\X_{\bq}/\bq})\simeq R\Gamma(\X_{\bq},\Omega^{i}_{\X_{\bq}/\bq})$$
since $\X_{\bq}/{\bq}$ is smooth.
\end{f-proof}

\begin{lem}\label{Lem-multadd}
Let $j\geq 1$ and let $C$ be a perfect complex of abelian groups. Then 
$$\chi^{\times}(C\otimes^L_{\bz}\bz/j\bz)= j^{\sum_{i\in\bz} (-1)^{i}\cdot\mathrm{dim}_{\bq}  H^{i}(C_{\bq})}.$$
\end{lem}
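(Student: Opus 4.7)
The plan is to exploit multiplicativity with respect to exact triangles to reduce the identity to two elementary computations on building blocks. Set
$$f(C) := \chi^{\times}(C \otimes^L_{\bz} \bz/j\bz), \qquad g(C) := j^{\sum_{i\in\bz}(-1)^i \dim_\bq H^i(C_\bq)}.$$
Because $C$ is perfect over $\bz$, the complex $C\otimes^L_\bz \bz/j\bz$ is perfect over the finite ring $\bz/j\bz$, so its cohomology groups are automatically finite and $f(C)\in \bq_{>0}$ is well-defined for every perfect $\bz$-complex (even those whose own cohomology is not torsion).

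The first step is to check that both $f$ and $g$ are multiplicative on exact triangles of perfect $\bz$-complexes. For $g$ this is the standard additivity of the $\bq$-Euler characteristic applied after the exact functor $(-)\otimes_\bz\bq$. For $f$ it follows from the multiplicativity of $\chi^\times$ on a long exact sequence of finite abelian groups, combined with the fact that $(-)\otimes^L_\bz\bz/j\bz$ preserves exact triangles.

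The second step is to reduce to complexes concentrated in a single degree. Using the truncation triangle $\tau^{\leq n-1}C \to \tau^{\leq n}C \to H^n(C)[-n]$ and induction on the amplitude of $C$, it suffices to treat $C\simeq H[-n]$ for $H$ a finitely generated abelian group. The structure theorem $H\cong \bz^r \oplus \bigoplus_i \bz/m_i\bz$ and further (split) triangles then reduce the verification to the two cases $C=\bz[k]$ and $C=\bz/m\bz[k]$ for $k\in\bz$ and $m\geq 1$.

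Finally, both cases are a one-line computation. For $C=\bz[k]$, the tensor product equals $\bz/j\bz[k]$, so $f(C)=j^{(-1)^k}=g(C)$. For $C=\bz/m\bz[k]$, the free resolution $[\bz\xrightarrow{m}\bz]$ shows that $C\otimes^L_\bz\bz/j\bz$ has cohomology $\bz/\gcd(m,j)\bz$ in two consecutive degrees of opposite parity, giving $f(C)=\gcd(m,j)^{(-1)^k+(-1)^{k-1}}=1=g(C)$, the last equality because $C_\bq=0$. I do not expect any real obstacle: the only point deserving attention is the well-definedness of $f$ on arbitrary perfect $\bz$-complexes via perfectness over $\bz/j\bz$, and the bookkeeping of signs in the two base cases.
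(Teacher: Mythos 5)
Your argument is correct, but it takes a different route from the paper. The paper's proof is a two-line computation: since $C$ is perfect one may replace it by a strictly perfect representative, i.e.\ a bounded complex of finitely generated free $\bz$-modules $C^{i}$; then $C\otimes^L_{\bz}\bz/j\bz$ is the bounded complex of finite groups with terms $C^{i}/jC^{i}$ of order $j^{\mathrm{rank}_{\bz}C^{i}}$, and because the multiplicative Euler characteristic of a bounded complex of finite abelian groups can be computed termwise as well as on cohomology, one gets $\chi^{\times}(C\otimes^L_{\bz}\bz/j\bz)=j^{\sum_i(-1)^{i}\mathrm{rank}_{\bz}C^{i}}=j^{\sum_i(-1)^{i}\dim_{\bq}H^{i}(C_{\bq})}$ at once. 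You instead stay at the level of the derived category: you check that both sides are multiplicative on exact triangles, do dévissage via the truncation triangles and the structure theorem to reduce to $\bz[k]$ and $\bz/m\bz[k]$, and verify these two cases by hand (including the pleasant cancellation of $\gcd(m,j)$ in the torsion case). Both arguments are complete; the paper's is shorter because the termwise computation makes all dévissage unnecessary, while yours avoids choosing a strict model and only uses additivity of $\chi^{\times}$ and of the rational Euler characteristic, at the cost of the (easy but implicit) observation that truncations of a perfect $\bz$-complex, and shifts of finitely generated abelian groups, are again perfect over $\bz$ — true since bounded complexes with finitely generated cohomology over $\bz$ are perfect. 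Your preliminary remark that $C\otimes^L_{\bz}\bz/j\bz$ has finite cohomology, so that $\chi^{\times}$ is defined even when $H^{*}(C)$ is not torsion, is a point the paper leaves implicit and is worth keeping.
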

\begin{proof}
We may suppose that $C$ is strictly perfect, i.e. that $C^{i}$ is a finitely generated free $\bz$-module for any $i\in\bz$, zero for almost all $i$. Then we have
$$\chi^{\times}(C\otimes^L_{\bz}\bz/j\bz)=j^{\sum_{i\in\bz}(-1)^{i}\cdot \mathrm{rank}_{\bz} C^{i}}= j^{\sum_{i\in\bz} (-1)^{i}\cdot\mathrm{dim}_{\bq}  H^{i}(C_{\bq})}$$
where $C_{\bq}:=C\otimes^L_{\bz}\bq$.
\end{proof}

\begin{rem}
It would be interesting to understand the possible connection between \cite{Hesselholt18} and Conjecture  \ref{conjmain} over finite fields.
\end{rem}

\begin{rem} We expect that Conjecture \ref{conjmain} can be generalized as follows. There should be a Weil-étale cohomology with compact support $R\Gamma_{W,c}(-,\bz(n))$ and a cohomology with compact support $R\Gamma_{c}(-,L\Omega^{<n}_{-/\bs})$ on the category of separated scheme of finite type over $\mathrm{Spec}(\bz)$, with values in perfect complexes of abelian groups. These complexes should recover, for regular schemes proper over $\mathrm{Spec}(\bz)$, the complexes defined in \cite{Flach-Morin18} and in this paper respectively, and should admit fiber sequences for any open-closed decomposition of schemes. We expect the existence of a canonical trivialization
\begin{equation*}
\lambda_\infty:\br\xrightarrow{\sim}\left({\det}_\bz R\Gamma_{W,c}(\mathcal{X},\mathbb{Z}(n))\otimes_\bz {\det}_\bz R\Gamma_c(\X,L\Omega_{\X/\bs}^{<n})\right)\otimes_\bz\br,
\end{equation*}
compatible with the fiber sequences mentioned above, and such that
$$
\lambda_{\infty}(\zeta^*(\mathcal{X},n)^{-1}\cdot\mathbb{Z})= {\det}_\bz R\Gamma_{W,c}(\mathcal{X},\mathbb{Z}(n))\otimes_\bz {\det}_\bz R\Gamma_c(\X,L\Omega_{\X/\bs}^{<n})
$$
for any separated scheme $\X$ of finite type over $\mathrm{Spec}(\bz)$ and any $n\in\bz$.
\end{rem}

\end{document}